\documentclass[12pt]{amsart}
\usepackage{amsfonts, amsbsy, amsmath, amssymb}
\usepackage{xcolor}
\usepackage{placeins}

\input prepictex.tex
\input pictex.tex
\input postpictex.tex

\usepackage{graphicx}
\newtheorem{thm}{Theorem}[section]
\newtheorem{lem}[thm]{Lemma}
\newtheorem{cor}[thm]{Corollary}
\newtheorem{prop}[thm]{Proposition}
\newtheorem{exmp}[thm]{Example}

\newtheorem{conj}[thm]{Conjecture}

\newtheorem{rmk}[thm]{Remark}

\newtheorem{ques}[thm]{Question}
\newtheorem{thm-con}[thm]{Theorem-Conjecture}
\numberwithin{equation}{section}

\theoremstyle{definition}
\newtheorem{defn}[thm]{Definition}

\newcommand{\x}{{\tt X}}
\newcommand{\y}{{\tt Y}}
\newcommand{\z}{{\tt Z}}
\newcommand{\f}{\Bbb F}

\newcommand{\ar}[2]{\arrow <4pt> [0.3, 0.67] from #1 to #2}

\title[Polynomial $g_{n,q}$]{A further study of polynomial $g_{n,q}$ over finite fields}

\begin{document}

\author[N. Fernando]{Neranga Fernando} 
\address{Department of Mathematics,
Knox College, Galesburg, IL 61401, USA}
\email{nfernando@knox.edu}

\author[Bhitali Kousik]{Bhitali Kousik}
\address{Department of Mathematical Sciences, Tezpur University, Tezpur, Assam 784028, India}
\email{msp24013@tezu.ac.in}

\maketitle

\begin{abstract}
Let $n\geq 0$ be an integer and $q$ a prime power. The polynomial $g_{n,q}$ was introduced in \cite{Hou-JCTA-2011} with the purpose of finding new classes of permutation polynomials over finite fields. We investigate the permutation behaviour of the polynomial $g_{n,q}(\x)$ over finite fields of even characteristic. We introduce the multivariate case of the polynomial $g_{n,q}$, and study the permutation polynomials in several variables and local permutation polynomials resulting from the polynomials $g_{n,q}(\x_1,\x_2,\ldots , \x_k)$. We also present several new identities of $g_{n,q}(\x)$, and present some open questions on the permutation property of $g_{n,q}(\x)$. 
\end{abstract}

\section{Introduction}\label{S1}

Let $p$ be a prime, $e$ and $m$ be positive integers, $q=p^m$, and $\mathbb{F}_q$ the finite field with $q$ elements. A polynomial $f(\x)\in \mathbb{F}_q[\x]$ is a called a \textit{permutation polynomial} if the associated mapping from $\mathbb{F}_q$ to $\mathbb{F}_q$ is a permutation. It is a well-known fact that every mapping from $\mathbb{F}_q$ to $\mathbb{F}_q$ can be uniquely represented by a polynomial of degree lees than $q$, and the unique polynomial can be found by Lagrange Interpolation. Finding new classes of permutation polynomials over finite fields has been a hot topic since the 1970s due to their applications in the areas of cryptography, coding theory, combinatorics, finite geometry, and computer science. 

The study of permutation polynomials over finite fields has a long history. It was Charles Hermite who studied permutation polynomials over finite prime fields in the 1860s. Leonard Eugene Dickson studied permutation polynomials over finite arbitrary fields while he was a PhD student in the 1890s. Issai Schur later named these polynomials in Dickson's honor. Dickson polynomials have played a pivotal role in the area of permutation polynomials over finite fields, and the permutation property of the Dickson polynomials is completely known. 

In 2009, Xiang-dong Hou, Gary Mullen, James Sellers and Joseph Yucas introduced the twin of Dickson polynomials, reversed Dickson polynomials, by interchanging the roles of the variable and the parameter of Dickson polynomials. Polynomial $g_{n,q}$ was introduced by Xiang-dong Hou in \cite{Hou-JCTA-2011} as a $q$-ary version of the reversed Dickson polynomial, but its permutation behaviour was first studied in \cite{Hou-FFA-2012}. In \cite{Hou-JCTA-2011}, the author showed that for each integer $n \geq 0$, there exists a unique polynomial $g_{n,q} \in \Bbb F_p[\x]$ such that
\begin{equation}\label{AB1}
\displaystyle\sum_{a \hspace{0.1cm}\in \hspace{0.1cm} \mathbb{F}_{q}} ({\x}+a)^{n} = g_{n,q}({\x}^{q} - {\x}).
\end{equation}
The explicit form of $g_{n,q}$ is given by Waring's formula
\begin{equation}\label{AB2}
g_{n,q}({\x}) = \sum_{\frac{n}{q}\leq l\leq \frac{n}{q-1}}\frac{n}{l}\dbinom{l}{n-l(q-1)}{\x}^{n-l(q-1)}.
\end{equation}

If $g_{n,q}$ is a PP of $\Bbb F_{q^{e}}$, we say that the triple $(n,e;q)$ is \textit{desirable}. Since 2011, several articles have appeared in the literature on polynomial $g_{n,q}$: \cite{Fernando-2019}, \cite{Fernando-Hou-FFA-2015}, \cite{Fernando-Hou-Lappano-FFA-2013}, \cite{Fernando-Hou-Lappano-DM-2014}, and \cite{Hou13}, to name a few. The authors in \cite{OS21} showed that the generalized almost perfect nonlinear (GAPN) functions  are related to the polynomial $g_{n,q}$. In fact, they showed that If $g_{n,p}(\x)$ is a permutation of $\mathbb{F}_{p^e}$, then $f(\x)=\x^n$ is a GAPN function. 

Even though the permutation behaviour of the polynomial $g_{n,q}$ has been studied extensively, multivariate case of the polynomial has not appeared in the literature. In this paper, we introduce the multivariate case of the polynomial $g_{n,q}$ and derive an explicit formula for the polynomial in several variables. We first show that $$g_{n,q}(\x,\y)=g_{n,q}(\x)\circ e_1(\x,\y),$$ where $e_1(\x,\y)$ is the elementary symmetric polynomials in two variables, which is immediately followed by the generalization to several variables: $$g_{n,q}(\x_1,\x_2,\ldots ,\x_k)=g_{n,q}(\x)\circ e_1(\x_1,\x_2,\ldots ,\x_k),$$
where $e_1(\x_1,\x_2,\ldots ,\x_k)$ is the elementary symmetric polynomial in $k$ variables. 

Let $\mathbb{F}_q^k$ be the $k$-fold cartesian product of $\mathbb{F}_q$, where $k\geq 1$ is an integer. Let $\overline{\x}=(\x_1,\ldots, \x_k)$, $\overline{\x}_i=(\x_1,\ldots, \x_{i-1},\x_{i+1},\ldots, \x_k)$, and $\mathbb{F}_q[\overline{\x}]$ denote the ring of polynomials in $k$ variables over $\mathbb{F}_q$. It is well-known that any map from $\mathbb{F}_q^k$ to $\mathbb{F}_q$ can be uniquely represented as a polynomial $f\in \mathbb{F}_q[\overline{\x}]$ such that $\text{deg}_{\x_i}(f)<q$ for all $i=1,\ldots, k$, where $\text{deg}_{\x_i}(f)$ is the degree of $f$ as a polynomial in the variable $\x_i$ with coefficients in the polynomial ring $\mathbb{F}_q[\overline{\x}_i]$. \vskip 0.1in 

\begin{defn}
A polynomial $f\in \mathbb{F}_q[\x_1,\ldots, \x_k]$ is a \textit{permutation polynomial} in $k$ variables over $\mathbb{F}_q$ if the equation $f(\x_1,\ldots, \x_k)=\alpha$ has exactly $q^{k-1}$ solutions in $\mathbb{F}_q^k$ for each $\alpha \in \mathbb{F}_q$.    
\end{defn}

\begin{defn}
A polynomial $f\in \mathbb{F}_q[\overline{\x}]$ is called a \textit{local permutation polynomial} (or LPP) if for each $i$, $1\leq i\leq k$, the polynomial $f(a_1,\ldots, a_{i-1},\x_i,a_{i+1}, \ldots, a_k)$ is a permutation polynomial in $\mathbb{F}_q[\x_i]$, for all choices of $\overline{a}_i\in \mathbb{F}_q^{k-1}$. 
\end{defn}

Local permutation polynomials have been studied by many due to their applications in the areas of cryptography, coding theory and latin squares. Any LPP is a permutation polynomial, but the converse is not true in general. For example, the polynomial $\x_1^{q-1}+\x_2$ is a permutation polynomial over $\mathbb{F}_q$, but not a local permutation polynomial. To the best of our knowledge, local permutation polynomials were first studied by Gary L. Mullen in \cite{Mullen-2} and \cite{Mullen-3}. He gave necessary and sufficient conditions for polynomials in two and three variables to be local permutations polynomials over a finite prime field $\mathbb{F}_p$ in \cite{Mullen-2} and \cite{Mullen-3}, respectively. Those conditions were expressed in terms of the coefficients of the polynomial. \vskip 0.1in 

For two variables, it was shown in \cite{DHK} that the degree of an LLP in $\mathbb{F}_q[x,y]$ is bounded above by $2(q-2)$. In \cite{GJ}, the authors showed that the result is true for several variables. We refer the reader to \cite{DHK}, \cite{GJ}, \cite{GJ-1}, \cite{GJ-2}, \cite{Mullen-2}, and \cite{Mullen-3} for detailed studies on local permutation polynomials. 

In order to investigate the permutation polynomials in several variables and local permutation polynomials arising from $g_{n,q}(\x_1,\x_2,\ldots ,\x_k)$, we first derive some results that are analogous to that of the univariate case. We then show that studying the permutation behaviour or the local permutation behaviour of the multivarite $g$-polynomial is equivalent to studying the permutation behaviour of its univariate case. Because of this reason, we take a new approach and investigate the permutation and the local permutation behaviour of the polynomial
$$g_{n,q}^{(\ell)}(\x_1,\,\x_2,\dots,\,\x_k)=g_{n,q}(x)\circ\sigma_{\ell}(\x_1,\,\x_2,\dots,\,\x_k),$$ where $\sigma_{\ell}=\sigma_{\ell}(\x_1,\,\x_2,\dots,\,\x_k)=\x_1^{\ell}+\x_2^{\ell}+\cdots +\x_k^{\ell},$ 
where $\ell\geq 1.$

We first show that if $\gcd(\ell,\,q^e-1)=1$, then $g_{n,q}^{(\ell)}\in\mathbb{F}_{q^e}[\bar{\x}]$ is a (local) permutation polynomial over $\mathbb{F}_{q^e}$ if and only if $g_{n,q}(\x)$ is a PP over $\mathbb{F}_{q^e}$. We then show that  $g_{n,q}^{(\ell)}$ can never be an LPP over $\mathbb{F}_{q^e}$ whenever $\gcd(\ell,\,q^e-1)\neq 1$.  We also conjecture that the polynomial $g_{n,q}^{(\ell)}\in \mathbb{F}_{q^e}[\x_1,\x_2,\ldots ,\x_k]$ is a PP over $\mathbb{F}_{q^e}$ if and only if $\text{gcd}(\ell,q^e-1)=1$ and $g_{n,q}$ is a PP over $\mathbb{F}_{q^e}$. 

 We would like to point out to the reader that even though the multivariate polynomials resulting from $g_{n,q}^{(\ell)}(\x_1,\,\x_2,\dots,\,\x_k)$ may not necessarily belong to the family of polynomial $g_{n,q}(\x_1,\,\x_2,\dots,\,\x_k)$, we believe that our results on the permutation property and the local permutation property of the polynomials $g_{n,q}^{(\ell)}(\x_1,\,\x_2,\dots,\,\x_k)$ would complement the area of LPPs and PPs in several variables over finite fields. 

The organization of the paper is as follows. 

Identities of the polynomial $g_{n.q}$ have played an important role in deriving polynomials for explaining their permutation behaviour. We present twelve new identities of the polynomial $g_{n,q}$ in Section 2. Section 3 of this paper is a continuation of \cite{Fernando-Hou-FFA-2015} and \cite{Fernando-2019}. Table 1 in \cite{Fernando-Hou-FFA-2015} contains all desirable triples $(n,e:4)$ with $e\leq 6$ and $w_q(n)>4$, where $w_q(n)$ stands for the base 4 weight of $n$. One of the unexplained desirable triples in \cite[Table~1]{Fernando-Hou-FFA-2015} was explained in \cite{Fernando-2019}. Section 3 is an attempt to answer several unexplained cases when $q=4$. 

In Section 4, we derive the multivariate case of the polynomial $g_{n,q}$ and explore some results analogous to the univariate case. In Section 5, we study permutation polynomials in several variables and local permutation polynomials associated with polynomial $g_{n,q}(\x_1,\x_2,\ldots, \x_k)$ over finite fields. 

In Section 6, we present eight open queastions regarding the permutation behaviour of the polynomial $g_{n,q}(\x)$ over finite fields of even characteristic. We conclude the paper with an updated table that contains all desirable triples $(n,e:4)$ with $e\leq 6$ and $w_q(n)>4$, where $w_q(n)$ denotes the base 4 weight of $n$. 

\section{Some new identities of the polynomial $g_{n,q}(\x)$}

Let $k\geq 0$ be an integer. For a fixed $q,$ we define $S_k=\x+\x^q+\x^{q^2}+\cdots +\x^{q^{k-1}}$ for $k\geq 1$ and $S_0=0.$ Note that, $\text{Tr}_{q^e/q}=S_e,$ where $\text{Tr}_{q^e/q}$ is the trace function  from $\mathbb{F}_{q^e}$ to $\mathbb{F}_q.$

The following three results from the literature relate the polynomial $g_{n,q}$ to $S_k$, and they will be useful in deriving further identities of $g_{n,q}.$

\begin{prop}\text{\rm(\cite[Eqn 4.1]{Hou-FFA-2012})}\label{lem 2.2} For integers $l,\,k\geq 0,$ we have
$$g_{l+q^k,q}=g_{l+1,q}+S_k\,g_{l,q}.$$
\end{prop}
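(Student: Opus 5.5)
The plan is to prove the identity through the defining relation \eqref{AB1}. Put $\y=\x^q-\x$. Since the map $g\mapsto g(\x^q-\x)$ from $\mathbb{F}_p[\x]$ to itself is injective ($\x^q-\x$ is nonconstant), it is enough to check that both sides agree after substituting $\y$. For the left side, \eqref{AB1} gives
$$g_{l+q^k,q}(\y)=\sum_{a\in\mathbb{F}_q}(\x+a)^{l}(\x+a)^{q^k}.$$
The key step is that $(\x+a)^{q^k}=\x^{q^k}+a^{q^k}=\x^{q^k}+a$ for $a\in\mathbb{F}_q$. This holds because the Frobenius map is additive in characteristic $p$ and fixes $\mathbb{F}_q$.

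Next I rewrite $\x^{q^k}$ as $\x+(\x^{q^k}-\x)$. The point of this step is to relate $\x^{q^k}-\x$ to $\y$. Telescoping gives
$$\x^{q^k}-\x=\sum_{i=0}^{k-1}\bigl(\x^{q^{i+1}}-\x^{q^i}\bigr)=\sum_{i=0}^{k-1}(\x^q-\x)^{q^i}=S_k(\x^q-\x)=S_k(\y).$$
The case $k=0$ is consistent with this, since both sides vanish and $S_0=0$.

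Combining the two steps,
$$\sum_{a}(\x+a)^l(\x+a)^{q^k}=\sum_a(\x+a)^{l+1}+S_k(\y)\sum_a(\x+a)^l=g_{l+1,q}(\y)+S_k(\y)\,g_{l,q}(\y).$$
By uniqueness in \eqref{AB1}, or equivalently by the injectivity of substituting $\x^q-\x$, this yields $g_{l+q^k,q}=g_{l+1,q}+S_k\,g_{l,q}$ as polynomials.

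The only point requiring care is the telescoping identity $\x^{q^k}-\x=S_k(\x^q-\x)$, together with justifying the cancellation of the substitution. That cancellation is immediate: a polynomial $h$ with $h(\x^q-\x)=0$ must be zero, by comparing degrees. One edge case is $l=0$, where $(\x+a)^0=1$ and $g_{0,q}=\sum_a 1=q=0$ in $\mathbb{F}_p$. The argument above goes through unchanged in that case, provided $0^0=1$ is used consistently.
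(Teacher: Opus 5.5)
Your proof is correct and is essentially the standard derivation. You expand $(\x+a)^{q^k}=\x^{q^k}+a$ inside \eqref{AB1}, split off $\x^{q^k}-\x=S_k(\x^q-\x)$ by telescoping, and cancel the substitution $\x\mapsto\x^q-\x$ by injectivity. The paper gives no proof of its own and simply quotes this identity from Hou's 2012 paper, where, as far as I recall, it is obtained in exactly this way.
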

In Section 4, we will explain the multivariate case of Proposition~\ref{lem 2.2}. 
\begin{thm}\text{\rm(\cite[Theorem 6.1]{Fernando-Hou-Lappano-FFA-2013})}\label{T6.1a}
Let $q\ge 4$ be even, and let
\[
n=1+q^{a_1}+q^{b_1}+\cdots+q^{a_{q/2}}+q^{b_{q/2}},
\]
where $a_i,b_i\ge 0$ are integers.
Then
\[
g_{n,q}=\sum_iS_{a_i}S_{b_i}+\sum_{i<j}(S_{a_i}+S_{b_i})(S_{a_j}+S_{b_j}).
\]
\end{thm}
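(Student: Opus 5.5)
Theorem~\ref{T6.1a} follows from the defining identity \eqref{AB1} alone, with $n-1$ expanded via Lucas' theorem. Put $\x=\y^q-\y$ for an indeterminate $\y$ over an algebraic closure of $\mathbb{F}_q$. Then $g_{n,q}(\x)=\sum_{a\in\mathbb{F}_q}(\y+a)^n$. Since $n=1+\sum_{i}(q^{a_i}+q^{b_i})$ and the Frobenius map is additive,
\[
(\y+a)^n=(\y+a)\prod_{i=1}^{q/2}(\y^{q^{a_i}}+a)(\y^{q^{b_i}}+a).
\]
Write $u_j=\y^{q^{c_j}}$, where $c_1,\dots,c_q$ lists $a_1,b_1,\dots,a_{q/2},b_{q/2}$, and put $u_0=\y$. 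The summand is then $\prod_{j=0}^{q}(u_j+a)$, which expands as $\sum_{r}e_{q+1-r}(u_0,\dots,u_q)\,a^{r}$.

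Next sum over $a\in\mathbb{F}_q$. We have $\sum_a a^r=0$ unless $r>0$ and $(q-1)\mid r$, in which case the sum is $-1=1$ in characteristic $2$. Since $0\le r\le q+1$, only $r=q-1$ contributes, so $g_{n,q}(\x)=e_2(u_0,\dots,u_q)$. The identity is therefore reduced to expressing $e_2(u_0,\dots,u_q)$ in terms of the $S_k$.

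The key relation is $u_j=\y^{q^{c_j}}=\y+(\y^q-\y)+\dots$, that is, $u_j=\y+S_{c_j}(\x)$. This holds because $S_c(\y^q-\y)=\y^{q^c}-\y$ by telescoping. Set $s_j=S_{c_j}$ for $j\ge1$ and $s_0=0$, so that $u_j=\y+s_j$. Then
\[
e_2(\y+s_0,\dots,\y+s_q)=\binom{q+1}{2}\y^2+q\,\y\sum_j s_j+e_2(s_1,\dots,s_q).
\]
Here $\binom{q+1}{2}=q(q+1)/2$ is even, because $4\mid q$ for even $q\ge 4$. The middle coefficient counts ordered index pairs, namely $\y\sum_j s_j$ times the number of other indices, which is $q$ and hence even. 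So $g_{n,q}=e_2(S_{a_1},S_{b_1},\dots,S_{a_{q/2}},S_{b_{q/2}})$.

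Finally, group the $q$ arguments into pairs. Then $e_2=\sum_i S_{a_i}S_{b_i}+\sum_{i<j}(S_{a_i}+S_{b_i})(S_{a_j}+S_{b_j})$, which is exactly the stated formula. The identity so far holds as polynomials in $\y$, that is, after substituting $\x=\y^q-\y$. Since $\y^q-\y$ is transcendental over $\mathbb{F}_p$, the identity lifts to an identity in $\mathbb{F}_p[\x]$.

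The main obstacle is the parity bookkeeping in the shift step. One must confirm that the coefficients of $\y^2$ and $\y\sum s_j$ vanish mod $2$, and this is where the hypothesis $q\ge4$ enters: for $q=2$, $\binom{3}{2}$ is odd. One must also confirm that no $r\neq q-1$ contributes to the character sum; the case $r=0$ gives $\sum_a 1=q=0$, so it drops out.
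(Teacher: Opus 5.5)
The paper gives no proof of this statement; it is quoted from \cite[Theorem 6.1]{Fernando-Hou-Lappano-FFA-2013}. Your argument therefore has nothing in the paper to track, and it is a correct, self-contained derivation from \eqref{AB1}. For $q\ge4$, only $r=q-1$ survives the power sums. The shift $u_j=\y+S_{c_j}(\y^q-\y)$ is right by telescoping. Both $q$ and $\binom{q+1}{2}$ are even because $4\mid q$. The lift from $\y^q-\y$ back to $\x$ is justified by transcendence. As a check against \eqref{AB2}, take $q=4$ and $n=23=1+4^1+4^2+1+1$: both sides give $\x^2+\x^5$.

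The route closest to the paper's own Section~2 would iterate Proposition~\ref{lem 2.2}. Write $n=1+\sum_{j=1}^{q}q^{c_j}$ and $l=n-q^{c_q}$. Then $g_{n,q}=g_{l+1,q}+S_{c_q}\,g_{l,q}$. Here $l$ is a sum of $q$ powers of $q$, so $g_{l,q}=\sum_{j=1}^{q-1}S_{c_j}$ is the linearized weight-$q$ case of \cite[Lemma 3.3]{Hou-FFA-2012}. Also, $l+1$ is just $n$ with $c_q$ replaced by $0$. Repeating the step gives $e_2(S_{c_1},\dots,S_{c_q})+g_{q+1,q}$, and $g_{q+1,q}=0$ for $q\ge4$.

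That argument needs the weight-$q$ formula and the value of $g_{q+1,q}$ as inputs. Yours needs only \eqref{AB1}, and it shows why it is exactly $e_2$ of the $S_{c_j}$ that appears: it is the coefficient of $a^{q-1}$. On the other hand, the recursion is what scales to identities like Lemma~\ref{L310}. There $n$ is a sum of far more than $q+1$ powers of $q$, several values of $r$ survive the power-sum step, and the resulting higher elementary symmetric polynomials are awkward to rewrite in terms of the $S_k$.

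Two small slips in your write-up:
\begin{itemize}
\item Lucas' theorem plays no role; only additivity of Frobenius is used.
\item The hypothesis $q\ge4$ is needed not only for the parity of $\binom{q+1}{2}$ but also in the power-sum step, since for $q=2$ every $r\in\{1,2,3\}$ would contribute.
\end{itemize}
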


\begin{lem}\text{\rm(\cite[Lemma 6.5]{Fernando-Hou-Lappano-FFA-2013})}
Let $n=(q-1)q^a+(q-1)q^b$, where $a,b\ge 0$. Then
\[
g_{n,q}=-1-(S_b-S_a)^{q-1}.
\]
\end{lem}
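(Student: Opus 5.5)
We prove the identity $g_{n,q}=-1-(S_b-S_a)^{q-1}$ for $n=(q-1)q^a+(q-1)q^b$ by working directly with the defining relation \eqref{AB1}. It suffices to show that
\[
\sum_{c\in\mathbb F_q}(\x+c)^n=-1-\bigl(S_b(\x^q-\x)-S_a(\x^q-\x)\bigr)^{q-1}.
\]
Since $\x\mapsto \x^q-\x$ is a nonconstant polynomial, two polynomials that agree after composing with it are equal. This gives uniqueness of $g_{n,q}$ and turns the claim into a polynomial identity in $\x$.

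The key observation is the telescoping identity
\[
S_k(\x^q-\x)=\x^{q^k}-\x,
\]
so the right-hand side becomes $-1-(\x^{q^b}-\x^{q^a})^{q-1}$.

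For the left-hand side, write $y=\x+c$. Then $y^n=(y^{q^a})^{q-1}(y^{q^b})^{q-1}$ with $y^{q^a}=\x^{q^a}+c$ and $y^{q^b}=\x^{q^b}+c$, because $c\in\mathbb F_q$ is fixed by Frobenius. Setting $u=\x^{q^a}$ and $v=\x^{q^b}$, we need
\[
\sum_{c\in\mathbb F_q}(u+c)^{q-1}(v+c)^{q-1}=-1-(v-u)^{q-1}
\]
as polynomials in independent indeterminates $u,v$. It suffices to prove this identity in two variables, since substituting $u=\x^{q^a}$, $v=\x^{q^b}$ then recovers the claim.

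To prove the two-variable identity, expand both factors by the binomial theorem. Using $\binom{q-1}{i}\equiv(-1)^i \pmod p$, we get
\[
(u+c)^{q-1}=\sum_{i=0}^{q-1}(-1)^i u^{q-1-i}c^{i},
\]
and similarly for $(v+c)^{q-1}$. Next, sum over $c$ using $\sum_{c\in\mathbb F_q}c^{m}=-1$ when $m>0$ and $(q-1)\mid m$, and $0$ otherwise (including $m=0$, where the sum is $q=0$). The exponent $m=i+j$ ranges over $0,\dots,2q-2$, so only $m=q-1$ and $m=2q-2$ contribute.
\begin{itemize}
\item The term $m=2q-2$ forces $i=j=q-1$ and contributes $-1$.
\item The terms with $i+j=q-1$ contribute $-\sum_{i}(-1)^{q-1}u^{q-1-i}v^{i}$.
\end{itemize}
We compare this with $(v-u)^{q-1}=\sum_i (-1)^{q-1-i}\binom{q-1}{i}v^iu^{q-1-i}=\sum_i(-1)^{q-1}v^iu^{q-1-i}$. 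Hence the second contribution equals $-(v-u)^{q-1}$, and this proves the claim.

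The main obstacle is the sign bookkeeping in the last step. In characteristic $p$ we have $(-1)^{q-1}=1$ when $q$ is odd, while $-1=1$ when $p=2$, so both cases reduce to the same identity. One must also check separately that the $m=0$ term contributes nothing, which holds because $q\equiv0$.
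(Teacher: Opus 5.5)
The paper gives no proof of this lemma. It quotes it from \cite{Fernando-Hou-Lappano-FFA-2013} and then uses it as the starting input for the recurrence-based derivations of Lemma~\ref{L310} and Lemma~\ref{lem 3.2}.

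Your argument is correct and self-contained. Each of the following checks out:
\begin{itemize}
\item the telescoping $S_k(\x^q-\x)=\x^{q^k}-\x$;
\item the Frobenius step $(\x+c)^{(q-1)q^a}=(\x^{q^a}+c)^{q-1}$;
\item the power-sum evaluation, including the $m=0$ term;
\item the case $q=2$, where $q-1$ and $2q-2$ are still the only positive multiples of $q-1$ in $[0,2q-2]$;
\item the case $a=b$, which is covered by specializing $u=v$.
\end{itemize}

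The difference in route is this. The paper's Section~2 identities come from manipulating Proposition~\ref{lem 2.2}. That proposition only relates different $g_{n,q}$ to one another, so it needs an identity like this one as a base case. You instead return to the defining relation \eqref{AB1} and compute the base case from scratch. The recurrence route is more economical for producing whole families of identities once a base is known. Your route is self-contained and yields base cases directly.

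Your method also reaches further than you used it. Replace $(v+c)^{q-1}$ by $(v+c)^{q-2}$ and use $\binom{q-2}{j}\equiv(-1)^j(j+1)\pmod p$. The same bookkeeping then gives $\sum_{c\in\mathbb F_q}(u+c)^{q-1}(v+c)^{q-2}=(u-v)^{q-2}$, which is Lemma~\ref{lem 3.2} obtained without passing through this lemma.

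Finally, the sign discussion in your last paragraph is unnecessary. The factor $(-1)^{q-1}$ appears identically in the two expressions you compare, so no case split on $p$ is needed.
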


For the rest of the section, we list twelve new identities of the polynomial $g_{n,q}(x)$. 

\begin{lem}\label{L310}
Let $q>2$ be even and $n=1\cdot q^0+(q-1)\cdot q^a+(q-1)\cdot q^b$, where $a,b\geq 0$. Then 
$$g_{1\cdot q^0+(q-1)\cdot q^a+(q-1)\cdot q^b}=S_{b-a+1}^{q^a}+S_b+S_b\,(S_b-S_a)^{q-1}.$$
\end{lem}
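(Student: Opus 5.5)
The plan is to bypass Proposition~\ref{lem 2.2} and compute directly from the defining identity \eqref{AB1}, expanding everything into sums $\sum_{c\in\mathbb{F}_q}(\x+c)^m$ with small exponents $m$. Write $\y=\x^q-\x$, and write $S_j$ for $S_j(\y)$. The basic relation is
$$\x^{q^j}-\x=S_j(\x^q-\x)=S_j,$$
so for $c\in\mathbb{F}_q$ we get $(\x+c)^{q^j}=\x+c+S_j$. Put $A=S_a$, $B=S_b$ and $z=\x+c$. Then
$$g_{n,q}(\y)=\sum_{c\in\mathbb{F}_q} z\,(z+A)^{q-1}(z+B)^{q-1}.$$
Since $q$ is even, every binomial coefficient $\binom{q-1}{i}$ is $1$ modulo $2$. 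Hence $(z+A)^{q-1}=\sum_{i=0}^{q-1}A^{q-1-i}z^i$, and similarly for $B$. Expanding the product gives
$$g_{n,q}(\y)=\sum_{i,j=0}^{q-1}A^{q-1-i}B^{q-1-j}\,g_{1+i+j,q}(\y),$$
because $\sum_c z^m=g_{m,q}(\y)$ by \eqref{AB1}.

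Next I evaluate $g_{m,q}$ for $1\le m\le 2q-1$ using Waring's formula \eqref{AB2}.
\begin{itemize}
\item For $m<q-1$ there is no admissible $l$, so $g_{m,q}=0$.
\item $g_{q-1,q}=-1=1$.
\item For $q\le m\le 2q-3$, the only candidate is $l=1$. Its binomial coefficient $\binom{1}{m-q+1}$ vanishes unless $m=q$, and for $m=q$ the factor $n/l=q$ kills it. So these all vanish.
\item $g_{2q-2,q}=q-1=1$, from $l=2$.
\item $g_{2q-1,q}=(2q-1)\y=\y$, from $l=2$ with $\binom{2}{1}\tfrac{2q-1}{2}=2q-1$.
\end{itemize}
So only the exponents $1+i+j\in\{q-1,\,2q-2,\,2q-1\}$ survive:
\begin{itemize}
\item $i+j=q-2$ contributes $\sum_{i+j=q-2}A^{q-1-i}B^{q-1-j}$;
\item $i+j=2q-3$ (that is, $\{i,j\}=\{q-1,q-2\}$) contributes $A+B$;
\item $i=j=q-1$ contributes $\y$.
\end{itemize}
The first sum equals $AB\sum_{i+j=q-2}A^{q-2-i}B^{i}=AB\,\frac{A^{q-1}+B^{q-1}}{A+B}$, interpreted as a polynomial identity. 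Altogether,
$$g_{n,q}=\y+A+B+AB\,(A+B)^{q-2}\quad\text{(in characteristic 2)},$$
using $\sum_{i=0}^{q-2}A^iB^{q-2-i}=(A+B)^{q-2}$ modulo $2$ when $q$ is a power of $2$. This last step needs checking; I would verify it via $(A+B)^{q-1}=\sum_i A^iB^{q-1-i}$ and multiplying by $A+B$.

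The final step, which I expect to be the main obstacle, is to match this with the stated right-hand side. First, $S_{b-a+1}^{q^a}=S_{b+1}-S_a=\y+B^q-A$, using $S_{b+1}=\y+S_b^q$. This is where the assumption $a\le b$ (implicit in $S_{b-a+1}$) enters; if $a>b$, I would swap the roles of $a$ and $b$ by symmetry. Second, $S_b(S_b-S_a)^{q-1}=B(A+B)^{q-1}$. So the target becomes
$$\y+A+B^q+B+B(A+B)^{q-1}.$$
It remains to show
$$B^q+B(A+B)^{q-1}=B+AB(A+B)^{q-2}$$
as polynomials over $\mathbb{F}_2$. Factoring out $B$ and clearing the common power of $(A+B)$, this reduces to the elementary identity
$$B^{q-1}+(A+B)^{q-1}=1+A(A+B)^{q-2}+\text{(correction)}.$$
I expect a discrepancy in the constant term here, since the earlier expressions for $g_{q-1,q}$ involve $-1$, and I would resolve it by recomputing the $i+j=q-2$ sum carefully through the telescoping form $AB(A^{q-1}-B^{q-1})/(A-B)$. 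If the bookkeeping becomes messy, the fallback is to check both sides on the substitution $\x\mapsto$ values where $A=0$ or $B=0$, and then argue by degree and symmetry.
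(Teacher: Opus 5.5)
\textbf{There is a genuine gap: the step $\sum_{i=0}^{q-2}A^iB^{q-2-i}=(A+B)^{q-2}$ is false, so the proposal as written does not prove the lemma.} Your expansion is correct up to
\[
g_{n,q}=\y+A+B+\sum_{i=0}^{q-2}A^{q-1-i}B^{1+i}=\y+A+B+AB\sum_{i=0}^{q-2}A^{i}B^{q-2-i}.
\]
The next step, replacing $\sum_{i=0}^{q-2}A^iB^{q-2-i}$ by $(A+B)^{q-2}$, fails for every $q\ge 4$. By Lucas' theorem $\binom{q-2}{i}$ is odd only for even $i$, so $(A+B)^{q-2}$ keeps only the even-indexed terms. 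For $q=4$ you would be equating $A^2+AB+B^2$ with $A^2+B^2$. The check you propose actually refutes the claim: after multiplying by $A+B$ it says $A^{q-1}+B^{q-1}=(A+B)^{q-1}=\sum_{i=0}^{q-1}A^iB^{q-1-i}$, which is false.

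So your closed form $\y+A+B+AB(A+B)^{q-2}$ is wrong. For $q=4$, $a=1$, $b=2$ (so $n=61$) it gives $\y+\y^4+\y^{10}+\y^{13}$, whereas Waring's formula gives $g_{61,4}(\y)=\y+\y^{13}$, as the lemma predicts. This is why your last step cannot close: the identity you reduce to is false. That holds with or without the extra $B$ on its right-hand side, which is a separate slip, since $+B$ occurs on both sides and cancels. The leftover ``$1$'' therefore has nothing to do with $g_{q-1,q}=-1$, which is $1$ in characteristic $2$ anyway. The fallback of testing $A=0$ or $B=0$ and arguing by degree and symmetry is not a proof. It also cannot detect this error, because the faulty term carries the factor $AB$.

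\textbf{The repair is short.} Keep the sum, and prove in $\mathbb{F}_2[A,B]$, with $A,B$ independent variables,
\[
B^q+B(A+B)^{q-1}=AB\sum_{i=0}^{q-2}A^iB^{q-2-i}.
\]
Multiply both sides by $A+B$ and use $(A+B)^q=A^q+B^q$. The left side becomes $AB^q+B^{q+1}+B(A^q+B^q)=AB^q+A^qB$, which is exactly the right side times $A+B$. Cancel $A+B$ in this integral domain and then substitute $A=S_a$, $B=S_b$. You should also state that $h\mapsto h(\x^q-\x)$ is injective on $\mathbb{F}_2[\y]$, so your identity in $\x$ really determines $g_{n,q}$.

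With this fix you have a valid proof by a genuinely different route from the paper's. The paper applies $g_{l+q^b}=g_{l+1}+S_b\,g_l$ to $l=(q-1)q^a+(q-1)q^b$. It then imports the weight-$q$ evaluation $g_{(q-1)q^a+q^{b+1}}=S_{b-a+1}^{q^a}$ and the known identity $g_{(q-1)q^a+(q-1)q^b}=-1-(S_b-S_a)^{q-1}$. Your version is self-contained, needing only $g_{m,q}$ for $m\le 2q-1$. It also produces an expression symmetric in $a$ and $b$, so $a\le b$ enters only when reading $S_{b-a+1}^{q^a}$ as $S_{b+1}-S_a$, and no separate swap argument is needed.
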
 

\begin{proof}
We have 
$$g_{(q-1)\cdot q^a+q^{b+1}}=g_{1\cdot q^0+(q-1)\cdot q^a+(q-1)\cdot q^b}+S_b\cdot g_{(q-1)\cdot q^a+(q-1)\cdot q^b},$$

which implies
$$\x^{q^a}+\x^{q^{a+1}}+\cdots +\x^{q^{b-1}}+\x^{q^b}=g_{1\cdot q^0+(q-1)\cdot q^a+(q-1)\cdot q^b}+S_b\,(-1-(S_b-S_a)^{q-1}).$$

Thus we have 

\[
\begin{split}
g_{1\cdot q^0+(q-1)\cdot q^a+(q-1)\cdot q^b}&=(\x+\x^q+\x^{q^2}+\cdots +\x^{q^{b-a}})^{q^a}-S_b\,(-1-(S_b-S_a)^{q-1})\cr
&=S_{b-a+1}^{q^a}+S_b+S_b\,(S_b-S_a)^{q-1}
\end{split}
\]

This completes the proof.

\end{proof} 

\begin{lem}\label{lem 3.2}
Let $a,b\geq 0$ be integers. Then 
$$g_{(q-1)\cdot q^a+(q-2)\cdot q^b}=-(S_b-S_a)^{q-2}.$$
\end{lem}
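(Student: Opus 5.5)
The plan is to verify the identity directly from the defining relation \eqref{AB1}, rather than through Proposition~\ref{lem 2.2}. Proposition~\ref{lem 2.2} does not reduce it cleanly: with $l=(q-1)q^a+(q-2)q^b$ and $k=b$ it relates $g_{l+q^b}$, which is known from the preceding lemma, to the unknown $g_{l+1}$.

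Put $n=(q-1)q^a+(q-2)q^b$ and $\y=\x^q-\x$. For $c\in\f_q$ we have $(\x+c)^{q^a}=\x^{q^a}+c$. Writing $u=\x^{q^a}$ and $v=\x^{q^b}$, this gives
$$\sum_{c\in\f_q}(\x+c)^n=\sum_{c\in\f_q}(u+c)^{q-1}(v+c)^{q-2}.$$
The summand is a polynomial in $c$ of degree $2q-3$. Since $\sum_{c\in\f_q}c^j$ equals $-1$ when $j>0$ and $(q-1)\mid j$, and equals $0$ otherwise (including $j=0$), only the coefficient of $c^{q-1}$ survives, and the sum equals minus that coefficient.

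To compute that coefficient I will use the congruences $\binom{q-1}{i}\equiv(-1)^i$ and $\binom{q-2}{j}\equiv(-1)^j(j+1) \pmod p$. Collecting the terms with $i+j=q-1$, the coefficient of $c^{q-1}$ becomes
$$(-1)^{q-1}\sum_{j=0}^{q-2}(j+1)\,u^{j}v^{q-2-j}.$$
The same congruence gives $(v-u)^{q-2}=\sum_{j}(j+1)u^jv^{q-2-j}$. Hence the sum equals $-(-1)^{q-1}(v-u)^{q-2}$. For odd $q$ the sign factor $(-1)^{q-1}$ is $1$, and in characteristic $2$ signs do not matter. So in every case
$$\sum_{c\in\f_q}(\x+c)^n=-(v-u)^{q-2}.$$

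It remains to express $v-u$ in terms of $\y$. Telescoping gives
$$\x^{q^b}-\x^{q^a}=\sum_{i=a}^{b-1}\y^{q^i}=S_b(\y)-S_a(\y)$$
when $b\ge a$. When $b<a$ the same formula holds, since both sides change sign. Therefore $\sum_{c\in\f_q}(\x+c)^n=-(S_b-S_a)^{q-2}\circ(\x^q-\x)$. By the uniqueness of $g_{n,q}$ in \eqref{AB1}, this yields $g_{n,q}=-(S_b-S_a)^{q-2}$.

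The main care point is the binomial-coefficient bookkeeping modulo $p$, in particular checking that the signs match in odd characteristic. The reduction of the character sum to a single coefficient is routine.
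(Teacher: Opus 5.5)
Your proof is correct, but it takes a different route from the paper's. The paper never expands $\sum_{c}(\x+c)^n$. Instead it applies Proposition~\ref{lem 2.2} to $l=(q-1)q^a+(q-2)q^b$ twice, once with $k=b$ and once with $k=a$, and subtracts. This eliminates the unknown $g_{l+1}$, so your remark that the recurrence ``does not reduce cleanly'' overlooks this trick. The subtraction gives $(S_b-S_a)\,g_l=g_{(q-1)q^a+(q-1)q^b}-g_{q^{a+1}+(q-2)q^b}=-1-(S_b-S_a)^{q-1}+1$. Here the paper uses the previously quoted identity for $(q-1)q^a+(q-1)q^b$ and the fact that $g_{n,q}=-1$ when $w_q(n)=q-1$, and then cancels a factor $S_b-S_a$.

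Given the earlier identities, that is a two-line derivation, and it is the template the paper reuses for Lemma~\ref{lem 3.3}. However, the cancellation is only valid for $a\neq b$. The case $a=b$ needs a separate remark: there $w_q(n)=q-2$, so both sides vanish for $q>2$.

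Your computation is self-contained from the defining relation $\sum_{c\in\mathbb{F}_q}(\x+c)^n=g_{n,q}(\x^q-\x)$. It needs only the power sums $\sum_{c\in\mathbb{F}_q}c^j$ and the two binomial congruences. It treats $a<b$, $a=b$, $a>b$ and $q=2$ uniformly, because $\sum_{c}(u+c)^{q-1}(v+c)^{q-2}=-(v-u)^{q-2}$ holds for independent $u,v$.

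It also generalizes at no extra cost. The same bookkeeping with $\binom{q-1-s}{i}\equiv(-1)^i\binom{s+i}{s}$ yields $g_{(q-1-s)q^a+(q-1-t)q^b}=(-1)^{s+1}\binom{s+t}{s}(S_b-S_a)^{q-1-s-t}$ for $s,t\ge 0$ with $1\le s+t\le q-1$. This recovers every item of Lemma~\ref{lem 3.3} at once. It also shows that item (6) there should have subscript $(q-2)q^a+(q-3)q^b$.
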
 

\begin{proof} 
\[
\begin{split}
(S_b-S_a)g_{(q-1)\cdot q^a+(q-2)\cdot q^b}&=g_{(q-1)\cdot q^a+(q-2)\cdot q^b+q^b}-g_{(q-1)\cdot q^a+(q-2)\cdot q^b+q^a}\cr
&=g_{(q-1)\cdot q^a+(q-1)\cdot q^b}-g_{q^{a+1}+(q-2)\cdot q^b}\cr
&=-1-(S_b-S_a)^{q-1}-(-1)\cr
\end{split} 
\]

This completes the proof.

\end{proof} 

In the following lemma, we list seven more idendities of the polynomial $g_{n,q}$.

\begin{lem}\label{lem 3.3}
Let $a,b\geq 0$ be integers. Then
\begin{enumerate}
    \item 
    $g_{(q-2)\cdot q^a+(q-1)\cdot q^b}=(S_b-S_a)^{q-2},$
    \item 
    $g_{(q-2)q^a+(q-2)q^b}=2(S_b-S_a)^{q-3},$
    \item 
    $g_{(q-1)q^a+(q-3)q^b}=-(S_b-S_a)^{q-3},$
    \item 
    $g_{(q-3)q^a+(q-1)q^b}=-(S_b-S_a)^{q-3},$
    \item 
    $g_{(q-3)q^a+(q-2)q^b}=-3(S_b-S_a)^{q-4},$
    \item 
    $g_{(q-3)q^a+(q-2)q^b}=3(S_b-S_a)^{q-4},$
    \item 
    $g_{(q-3)q^a+(q-3)q^b}=-6(S_b-S_a)^{q-5}.$
\end{enumerate}
\end{lem}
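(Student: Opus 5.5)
The approach is to imitate the proof of Lemma~\ref{lem 3.2}: run a downward induction on the digit pattern using Proposition~\ref{lem 2.2}. Subtracting two instances of Proposition~\ref{lem 2.2} with the same $l=m$ gives the basic tool
\[
g_{m+q^b,q}-g_{m+q^a,q}=(S_b-S_a)\,g_{m,q}. \qquad (\ast)
\]
Every identity in the list is of the form $g_m=c\,(S_b-S_a)^{j}$. I will obtain each one from $(\ast)$ once $g_{m+q^a}$ and $g_{m+q^b}$ are known from earlier items, and then cancel the factor $S_b-S_a$. This cancellation is legitimate in the integral domain $\mathbb{F}_p[\x]$ provided $a\neq b$, since then $S_b-S_a$ is a nonzero polynomial.

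The steps are as follows.

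Item (1) needs no new computation. Interchange $a$ and $b$ in Lemma~\ref{lem 3.2} to get $g_{(q-2)q^a+(q-1)q^b}=-(S_a-S_b)^{q-2}=-(-1)^{q-2}(S_b-S_a)^{q-2}$. The sign $-(-1)^{q-2}=(-1)^{q-1}$ equals $1$ for odd $q$, and signs are irrelevant in characteristic $2$.

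For item (2), apply $(\ast)$ with $m=(q-2)q^a+(q-2)q^b$:
\[
(S_b-S_a)\,g_m=g_{(q-2)q^a+(q-1)q^b}-g_{(q-1)q^a+(q-2)q^b}=2(S_b-S_a)^{q-2},
\]
using item (1) and Lemma~\ref{lem 3.2}. Then cancel.

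For item (3), take $m=(q-1)q^a+(q-3)q^b$. Then $(\ast)$ expresses $(S_b-S_a)g_m$ as $g_{(q-1)q^a+(q-2)q^b}-g_{q^{a+1}+(q-3)q^b}$. The second term has base-$q$ digit sum $q-2<q-1$, so it vanishes: indeed $\sum_{a\in\mathbb{F}_q}(\x+a)^n=0$ whenever $w_q(n)<q-1$, because then every monomial of the multinomial expansion involves $\sum_a a^{i}$ with $0<i<q-1$ or $i=0$, and these sums are $0$. The same argument applies to the term $g_{q^{a+1}+(q-2)q^b}=-1$ already used in Lemma~\ref{lem 3.2}. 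Item (4) then follows from item (3) by the $a\leftrightarrow b$ symmetry, exactly as in item (1).

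Items (5) and (6) (the latter presumably meant to be $(q-2)q^a+(q-3)q^b$) come from $(\ast)$ with those $m$, feeding in items (2)--(4). Finally, item (7) comes from $(\ast)$ with $m=(q-3)q^a+(q-3)q^b$, feeding in items (5) and (6). The coefficients $2,3,6$ arise as $1+1$, $2+1$ and $3+3$ with the appropriate signs, and at each stage I will fix signs using $(-1)^{q-1}=1$ for odd $q$.

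I expect the main obstacle to be the degenerate case $a=b$, where $S_b-S_a=0$ and cancellation fails. There $n$ is a single digit block, e.g.\ $(2q-4)q^a$. I would treat it directly from the Waring formula (\ref{AB2}), combined with $g_{nq,q}=g_{n,q}^q$ (i.e.\ raising to the $q^a$ power). The check is that the value agrees with the right-hand side read with the convention $0^0=1$, or else the statement must be restricted to $a\neq b$. A secondary point is to handle carries, such as $(q-1)q^a+q^a=q^{a+1}$, correctly when identifying the indices produced by $(\ast)$.
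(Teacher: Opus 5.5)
Your proposal is correct and is essentially the paper's own argument, which it omits as ``similar to Lemma~\ref{lem 3.2}'': subtract two instances of Proposition~\ref{lem 2.2} to get $(S_b-S_a)g_m=g_{m+q^b}-g_{m+q^a}$, substitute Lemma~\ref{lem 3.2} and the earlier items, and cancel $S_b-S_a$. You are also right on two further points. First, item (6) as printed contradicts item (5) unless $p\in\{2,3\}$, and should read $g_{(q-2)q^a+(q-3)q^b}=3(S_b-S_a)^{q-4}$. Second, the cancellation needs $a\neq b$, which the paper assumes without saying so; your direct check for $a=b$ goes through, because there the weight lemma gives $g=0$ or $-1$, which matches the right-hand side under the convention $0^0=1$.
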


We omit the proofs of the identities of Lemma~\ref{lem 3.3} as they are similar to that of Lemma~\ref{lem 3.2}. 

Since we believe that any polynomial $g_{n,q}$ enthusiast will find the identities given in Lemma~\ref{lem 3.4}, Lemma~\ref{lem 3.5}, and Lemma~\ref{lem 3.6} useful in investigating the permutation behaviour as well as the irreducibility of the polynomials $g_{n.,q}$, we present them for completeness. 

\begin{lem}\label{lem 3.4}
    Let $l=(q-3)q^0+(q-3)q^1.$ Then $$g_{l+q^2}=3S_1^{q-4}-6S_2S_1^{q-5},$$ and $$g_{l+2q^2}=-S_1^{q-3}+6S_2S_1^{q-4}-6S_2^2S_1^{q-5}.$$  
\end{lem}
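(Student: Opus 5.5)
Both identities will follow from Proposition~\ref{lem 2.2} applied with $k=2$, combined with item (7) of Lemma~\ref{lem 3.3}, specialized to $a=0$ and $b=1$. With $l=(q-3)q^0+(q-3)q^1$, the proposition gives
$$g_{l+q^2}=g_{l+1}+S_2\,g_{l}.$$
Since $S_1-S_0=S_1=\x$, item (7) with $a=0,b=1$ yields $g_l=-6S_1^{q-5}$. Also $l+1=(q-2)q^0+(q-3)q^1$. This index is exactly the one treated in items (5)/(6) of Lemma~\ref{lem 3.3} with $a=1,b=0$, up to swapping which digit sits at which power. I will therefore evaluate $g_{l+1}$ directly rather than trust the stated sign.

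To do this, I apply Proposition~\ref{lem 2.2} again, now with $k=0$ (where $S_0=0$) and with $k=1$ (where $S_1=\x$), to the base index $m=(q-3)+(q-3)q$:
$$g_{m+1}=g_{m+1}, \qquad g_{m+q}=g_{m+1}+S_1 g_m .$$
The second relation gives $g_{(q-2)q^0+(q-3)q^1}=g_{(q-3)q^0+(q-2)q^1}-S_1g_m$. Both of these indices are covered by items (5),(6) of Lemma~\ref{lem 3.3} with $(a,b)=(0,1)$ or $(1,0)$, that is, they are $\pm3S_1^{q-4}$. Substituting $g_m=-6S_1^{q-5}$ and imposing consistency forces $g_{(q-2)+(q-3)q}=3S_1^{q-4}$. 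This is the \emph{one delicate point}: the signs in items (5)/(6) must be matched to the correct orientation of $(a,b)$. As an independent check I will use the convention $S_b-S_a$ with $S_a$ negated; swapping $a,b$ flips the sign of an odd power $(S_b-S_a)^{q-4}$, since $q-4$ has the parity of $q$. Plugging in then gives
$$g_{l+q^2}=3S_1^{q-4}-6S_2S_1^{q-5},$$
which is the first identity.

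For the second identity, I apply Proposition~\ref{lem 2.2} with $k=2$ to the index $l+q^2$:
$$g_{l+2q^2}=g_{l+q^2+1}+S_2\,g_{l+q^2}.$$
Applying the proposition once more to $l+1$ gives $g_{l+1+q^2}=g_{l+2}+S_2 g_{l+1}$. Here $l+2=(q-1)q^0+(q-3)q^1$, so item (3) with $a=0,b=1$ gives $g_{l+2}=-S_1^{q-3}$. Combining these,
$$g_{l+2q^2}=-S_1^{q-3}+S_2\cdot 3S_1^{q-4}+S_2\bigl(3S_1^{q-4}-6S_2S_1^{q-5}\bigr)=-S_1^{q-3}+6S_2S_1^{q-4}-6S_2^2S_1^{q-5},$$
which is the second identity.

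The main obstacle is the sign bookkeeping for $g_{(q-2)q^0+(q-3)q^1}$. If the consistency argument above is ambiguous, I will instead derive it directly, in the same way as Lemma~\ref{lem 3.2}. That means multiplying by $(S_1-S_0)$ and using $g_{j+q}-g_{j+1}=S_1g_j$ together with the known values $g_{(q-1)+(q-3)q}$ and $g_{(q-2)+(q-2)q}$.
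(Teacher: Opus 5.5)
Your proof is correct and takes the same route as the paper: apply Proposition~\ref{lem 2.2} with $k=2$ (twice), then evaluate $g_l$, $g_{l+1}$, $g_{l+2}$ using items (7), (5)/(6), and (3) of Lemma~\ref{lem 3.3}. You were right to distrust items (5)/(6): item (6) is misprinted and should read $g_{(q-2)q^a+(q-3)q^b}=3(S_b-S_a)^{q-4}$, and your direct derivation in the style of Lemma~\ref{lem 3.2} correctly confirms $g_{l+1}=3S_1^{q-4}$.
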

\begin{proof} The result follows by applying the identities obtained in the previous lemma.
\end{proof}
\begin{lem}\label{lem 3.5}
    Let $n=(q-3)q^0+(q-3)q^1+3q^2.$ Then $$g_{n,q}=-3S_2S_1^{q-3}+9S_2^2S_1^{q-4}-6S_2^3S_1^{q-5}. $$
\end{lem}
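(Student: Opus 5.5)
We obtain the formula by applying the recursion of Proposition~\ref{lem 2.2} three times, each time raising the $q^2$-digit by one, starting from $l=(q-3)q^0+(q-3)q^1$. Write $l_j=l+jq^2$ for $j=0,1,2,3$, so that $n=l_3$. The three ingredients are the value of $g_{l,q}$ from Lemma~\ref{lem 3.3}(7), the values of $g_{l+q^2}$ and $g_{l+2q^2}$ from Lemma~\ref{lem 3.4}, and the value $g_{l_2+1}$.

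With $k=2$, Proposition~\ref{lem 2.2} reads
$$g_{l_3}=g_{l_2+q^2}=g_{l_2+1}+S_2\,g_{l_2}.$$
Lemma~\ref{lem 3.4} gives $g_{l_2}$ directly, so
$$S_2\,g_{l_2}=-S_2S_1^{q-3}+6S_2^2S_1^{q-4}-6S_2^3S_1^{q-5}.$$

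It remains to compute $g_{l_2+1}$, where $l_2+1=(q-2)q^0+(q-3)q^1+2q^2$. We peel off the $q^2$-digit again, now using the base $m=(q-2)+(q-3)q$:
$$g_{m+2q^2}=g_{m+q^2+1}+S_2\,g_{m+q^2},\qquad g_{m+q^2}=g_{m+1}+S_2\,g_m.$$
The terms of lower complexity are identified through Lemma~\ref{lem 3.3} with $a=0$, $b=1$, where $S_b-S_a=S_1$:
\begin{itemize}
\item $g_m=3S_1^{q-4}$ by item (6), which concerns $(q-3)q^a+(q-2)q^b$. Since the symmetric pair $(q-2)q^a+(q-3)q^b$ is intended here, the sign must be checked.
\item $g_{m+1}=g_{(q-1)+(q-3)q}=-S_1^{q-3}$ by item (3).
\item $g_{m+q^2+1}=g_{(q-1)+(q-3)q+q^2}$. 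This is handled by one more application of the recursion: $g_{(q-1)+(q-3)q+q^2}=g_{q+(q-3)q}+S_2\,g_{(q-1)+(q-3)q}$, and $g_{(q-2)q}=g_{q-2}^{\,q}=S_1^{q-2}\cdot(\text{const})$. This constant can be read off from $g_{q-2,q}=0$ or from Waring's formula \eqref{AB2}.
\end{itemize}

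Combining these gives $g_{l_2+1}$ as a polynomial in $S_1$ and $S_2$. Adding it to $S_2\,g_{l_2}$, the target formula predicts
$$g_{l_2+1}=-2S_2S_1^{q-3}+3S_2^2S_1^{q-4},$$
with no $S_1^{q-2}$ term. We will verify this cancellation explicitly.

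The main obstacle is the bookkeeping of small constants and signs in the auxiliary values: the order of digits in items (5)/(6) of Lemma~\ref{lem 3.3}, and values such as $g_{(q-2)q}$ and $g_{(q-1)+(q-2)q}$, where a carry occurs. For these we will fall back on Waring's formula \eqref{AB2} or on the identity $g_{iq^a+jq^b}$ computed via Lemma~\ref{lem 3.2}. As a final sanity check we will evaluate both sides numerically for a small even $q$, for instance $q=8$ with a small trial point.
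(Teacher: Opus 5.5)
Your plan is correct and is essentially the paper's argument. Both start from $g_{l+3q^2}=g_{l+2q^2+1}+S_2\,g_{l+2q^2}$ with Lemma~\ref{lem 3.4}. The only difference is that the paper first rewrites $g_{l+2q^2+1}=g_{l+2q^2+q}-S_1\,g_{l+2q^2}$ and expands $g_{(q-3)+(q-2)q+2q^2}$, whereas you peel the $q^2$-digit of $(q-2)+(q-3)q+2q^2$ directly.

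Your inputs are right. $g_m=3S_1^{q-4}$ is correct, since item (6) of Lemma~\ref{lem 3.3} is a typo for $(q-2)q^a+(q-3)q^b$. Also $g_{(q-2)q}=0$ outright, because $w_q((q-2)q)=q-2<q-1$; it is not some multiple of $S_1^{q-2}$ to be determined. Hence $g_{m+q^2+1}=-S_2S_1^{q-3}$ and $g_{m+q^2}=-S_1^{q-3}+3S_2S_1^{q-4}$, and the combination gives exactly $-2S_2S_1^{q-3}+3S_2^2S_1^{q-4}$. No $S_1^{q-2}$ term ever appears, so there is no cancellation to check; your route avoids the one the paper performs.
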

\begin{proof} Let $l=(q-3)q^0+(q-3)q^1.$ We have
    \begin{eqnarray*}
g_{l+3q^2}&=&g_{l+2q^2+1}+S_2 g_{l+2q^2}\\
&=&g_{l+2q^2+q}-S_1g_{l+2q^2}+S_2g_{l+2q^2}\\
&=&g_{l+2q^2}(S_2-S_1)+g_{(q-3)q^0+(q-2)q^1+2q^2}.
\end{eqnarray*}
We now compute $g_{j+2q^2},$ where $j=(q-3)q^0+(q-2)q^1.$ To do so, we first determine $g_{j+q^2}.$

$$g_{j+q^2}=g_{j+1}+S_2 g_j=2S_1^{q-3}-3S_2S_1^{q-4}.$$
Now,
\[
\begin{split}
    g_{j+2q^2}&=g_{j+q^2+1}+S_2 g_{j+q^2}=g_{j+2}+S_2g_{j+1}+S_2g_{j+q^2}\cr
&=-S_1^{q-2}+2S_2S_1^{q-3}+2S_2S_1^{q-3}-3S_2^2S_1^{q-4}\cr
&=-S_1^{q-2}+4S_2S_1^{q-3}-3S_2^2S_1^{q-4}.
\end{split}
\]
Thus by using the value of $g_{l+2q^2}$ from Lemma~\ref{lem 3.4}, we have,
\[
\begin{split}
  g_{l+3q^2}&=(-S_1^{q-3}+6S_2S_1^{q-4}-6S_2^2S_1^{q-5})(S_2-S_1)-S_1^{q-2}+4S_2S_1^{q-3}-3S_2^2S_1^{q-4}\cr
&=-3S_2S_1^{q-3}+9S_2^2S_1^{q-4}-6S_2^3S_1^{q-5}.  
\end{split}
\]

\end{proof}
For convenience, we write $A=g_{j+2q^2}$ and $B=g_{l+3q^2}$, with these expressions coming from the proof of the preceding lemma.
\begin{lem}\label{lem 3.6}
    Let $n=(q-3)q^0+(q-3)q^1+4q^2.$ Then $g_{n,q}=-1+6S_2^2S_1^{q-5}(S_2^2+2S_2S_1-S_1^2).$
\end{lem}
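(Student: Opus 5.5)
With $l=(q-3)q^0+(q-3)q^1$, $j=(q-3)q^0+(q-2)q^1$, $A=g_{j+2q^2}$ and $B=g_{l+3q^2}$ as in the proof of Lemma~\ref{lem 3.5}, I would follow the same scheme as that proof, one step further up in the coefficient of $q^2$. Apply Proposition~\ref{lem 2.2} with $k=2$ and then with $k=1$ (in the form $g_{m+1}=g_{m+q}-S_1g_m$):
\[
g_{l+4q^2}=g_{l+3q^2+1}+S_2\,g_{l+3q^2}
=g_{l+3q^2+q}+(S_2-S_1)\,g_{l+3q^2}.
\]
Here $l+3q^2+q=j+3q^2$. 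So $g_{n,q}=(S_2-S_1)B+g_{j+3q^2}$, and it remains to compute $g_{j+3q^2}$.

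I would compute $g_{j+3q^2}$ by the same recursion used for $g_{j+2q^2}$:
\[
g_{j+3q^2}=g_{j+2q^2+1}+S_2A .
\]
Since $j+1=(q-2)q^0+(q-2)q^1$, the index $j+2q^2+1$ is $(q-2)q^0+(q-2)q^1+2q^2$. I would reduce it once more with Proposition~\ref{lem 2.2}:
\[
g_{(j+1)+2q^2}=g_{(j+1)+q^2+1}+S_2\,g_{(j+1)+q^2}.
\]
Now $(j+1)+1=(q-1)q^0+(q-2)q^1$. The base values are known: $g_{j+1}=2S_1^{q-3}$ by Lemma~\ref{lem 3.3}(2), $g_{j+2}=-S_1^{q-2}$ by Lemma~\ref{lem 3.2}, and $g_{(q-1)q^0+(q-1)q^1}=-1-S_1^{q-1}$. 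Whenever a coefficient reaches $q$, a carry turns $q\cdot q^0$ into $q^1$; for example $(q-1)q^0+(q-2)q^1+1=(q-1)q^1$, and $g_{(q-1)q^1}=g_{(q-1)q^0}^{\,q}$ up to the standard identity ($g_{q-1}=-1$). These base values give $g_{j+1+q^2}$, then $g_{j+1+2q^2}$, then $g_{j+3q^2}$. Each is a polynomial in $S_1,S_2$ of the shape $\sum c_iS_2^iS_1^{q-?}$, plus possibly a constant coming from the $(q-1,q-1)$ term.

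Finally, substitute $B=-3S_2S_1^{q-3}+9S_2^2S_1^{q-4}-6S_2^3S_1^{q-5}$ and expand $(S_2-S_1)B+g_{j+3q^2}$. The constant $-1$ should enter through $g_{(q-1)q^0+(q-1)q^1}=-1-S_1^{q-1}$. The terms in $S_2S_1^{q-2}$, $S_2^2S_1^{q-3}$, and so on should cancel, leaving $-1+6S_2^4S_1^{q-5}+12S_2^3S_1^{q-4}-6S_2^2S_1^{q-3}$, which is the claimed form.

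The main obstacle is bookkeeping the carries, i.e. correctly identifying indices such as $j+2q^2+q$ or $(q-1)q^0+(q-2)q^1+q^2+1$ in base $q$ and matching each to the right earlier identity. I would tabulate every intermediate index with its base-$q$ digits before expanding. I would then check the final cancellation numerically for a small $q$, for instance $q=7$ or $q=8$, where the exponents $q-5$ are positive.
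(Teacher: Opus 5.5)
Your reduction is sound and is essentially the paper's own. Both start from $g_{l+4q^2}=g_{j+3q^2}+(S_2-S_1)B$. The paper then applies the $k=1$ identity once more, $g_{j+3q^2}=g_{(q-3)q^0+(q-1)q^1+2q^2}+(S_2-S_1)A$, and gets its constant from $g_{(q-1)q^0+(q-1)q^1}=-1-S_1^{q-1}$. In your route that value is never used; the constant comes from the carry $g_{j+3}=g_{(q-1)q}=-1$.

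The gap is the final step. You assert it instead of computing it, and it is false. Carrying out your recursion gives
\begin{align*}
g_{j+1+q^2}&=-S_1^{q-2}+2S_2S_1^{q-3},\qquad g_{j+2+q^2}=-1-S_2S_1^{q-2},\\
g_{j+1+2q^2}&=-1-2S_2S_1^{q-2}+2S_2^2S_1^{q-3},\\
g_{j+3q^2}&=-1-3S_2S_1^{q-2}+6S_2^2S_1^{q-3}-3S_2^3S_1^{q-4},\\
(S_2-S_1)B&=3S_2S_1^{q-2}-12S_2^2S_1^{q-3}+15S_2^3S_1^{q-4}-6S_2^4S_1^{q-5},
\end{align*}
and hence
\[
g_{n,q}=-1-6S_2^2S_1^{q-3}+12S_2^3S_1^{q-4}-6S_2^4S_1^{q-5}=-1-6S_2^2S_1^{q-5}(S_2-S_1)^2 .
\]

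The coefficient of $S_2^4S_1^{q-5}$ is $-6$, not $+6$. The discrepancy $12S_2^4S_1^{q-5}$ is nonzero whenever $p\ge 5$, so the identity as displayed in the lemma is false in characteristic $\ge 5$, and no argument can end in it. For example, with $q=5$ and $n=112$, Waring's formula gives $g_{112,5}=-1-\x^{12}-2\x^{16}-\x^{20}$, whereas the displayed formula gives $-1+2\x^{4}+3\x^{8}+\x^{12}+\x^{16}+\x^{20}$.

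Since $S_2-S_1=\x^q$, the corrected form equals $-1-6\x^{3q-3}-12\x^{4q-4}-6\x^{5q-5}$. This is exactly the $\x$-expansion written at the end of the paper's proof, so the paper's ``after simplification'' step has the same sign slip: the factor should be $S_2^2-2S_2S_1+S_1^2$, not $S_2^2+2S_2S_1-S_1^2$.

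Your proposed sanity check at $q=8$ would not catch this, because $6=12=0$ in characteristic $2$ and both expressions collapse to $-1$. A check at $q=5$ or $q=7$ would.
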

\begin{proof}
    Let $l=(q-3)q^0+(q-3)q^1.$ We have
    \[
    \begin{split}
        g_{l+4q^2}&=g_{l+3q^2+1}+S_2g_{l+3q^2}\cr
&=g_{l+3q^2+q^1}-S_1g_{l+3q^2}+S_2g_{l+3q^2}\cr
&=g_{l+q^1+2q^2+1}+S_2g_{l+q^1+2q^2}+(S_2-S_1)g_{l+3q^2}\cr
&=g_{(q-3)q^0+(q-1)q^1+2q^2}+(S_2-S_1)(A+B)
    \end{split}
    \]
Next, we determine $g_{j+q^1}$ and $g_{j+q^2},$ where $j=(q-3)q^0+(q-1)q^1.$
$$g_{j+q^2}=g_{j+1}+S_2g_j=S_1^{q-2}-S_2S_1^{q-3},$$ and
\[
\begin{split}
    g_{j+2q^2}&=g_{j+q^2+1}+S_2g_{j+q^2}=g_{j+2}+S_2g_{j+1}+S_2g_{j+q^2}\cr
    &=-1-S_1^{q-1}+2S_2S_1^{q-2}-S_2^2S_1^{q-3}.
\end{split}
\]

Also, $A+B=-S_1^{q-2}+S_2S_1^{q-3}+6S_2^2S_1^{q-4}-6S_2^3S_1^{q-5}.$ Hence, after simplification, we have
\[
\begin{split}
   g_{l+4q^2}&=-1-S_1^{q-1}+2S_2S_1^{q-2}-S_2^2S_1^{q-3}\cr
& +(S_2-S_1)(-S_1^{q-2}+S_2S_1^{q-3}+6S_2^2S_1^{q-4}-6S_2^3S_1^{q-5})\cr
&=-1+6S_2^2S_1^{q-5}(S_2^2+2S_2S_1-S_1^2). 
\end{split}
\]

 Applying $S_2=\x+\x^q$ and $S_1=\x$ to the above expression, we obtain
$$g_{l+4q^2,q}=-1-6\x^{5q-5}-12\x^{4q-4}-6\x^{3q-3}.$$
\end{proof}

\section{PPs over finite fields of even characteristic}\label{S3}

In this section, we investigate permutation polynomials over finite fields of even characteristic resulting from polynomials $g_{n,q}(\x)$. Let $\overline{\mathbb{F}}_p$ be the algebraic closure of $\mathbb{F}_p$ and $f=\sum_{i=0}^na_i\x^{q^i}\in\overline\f_p[\x]$ be a $q$-linearized polynomial. The conventional associate of $f$ is the polynomial $\widetilde f=\sum_{i=0}^na_ix^i\in\overline\f_p[\x]$. We first present a result from \cite{Fernando-Hou-FFA-2015} that will play a key role in our results throughout this section. 



\begin{prop}\label{P3.1}\text{\rm(\cite{Fernando-Hou-FFA-2015})}
Let $m$ and $e$ be positive integers, $r$ a prime power and $q=r^m$. 
A polynomial $f\in\f_{q^e}[\x]$ is a PP of $\f_{q^e}$ if the following conditions are all satisfied.
\begin{itemize}
  \item [(i)] There exists a PP $\bar f\in\f_q[\x]$ of $\f_q$ such that the diagram
\[
\beginpicture
\setcoordinatesystem units <3mm,3mm> point at 0 0

\ar{1 0}{5 0}
\ar{1 6}{5 6}
\ar{0 5}{0 1}
\ar{6 5}{6 1}
\put {$\f_{q^e}$} at 0 6
\put {$\kern1mm\f_{q^e}$} at 6 6
\put {$\f_q$} at 0 0
\put {$\f_q$} at 6 0
\put {$\scriptstyle f$} [b] at 3 6.2 
\put {$\scriptstyle \bar f$} [t] at 3 -0.2 
\put {$\scriptstyle S_e$} [r] at -0.2 3
\put {$\scriptstyle S_e$} [l] at 6.2 3
\endpicture
\]  
commutes.

\item[(ii)] For each $c\in\f_q$, there exist $q$-linearized polynomials $f_{c,i}\in\f_r[\x]$ and $a_{c,i}\in\f_q$, $0\le i\le m-1$, and $b_c\in\f_{q^e}$ such that
\begin{equation}\label{3.1}
f(x)=f_c(x)+b_c\quad\text{for all}\ x\in S^{-1}(c),
\end{equation}
where 
\begin{equation}\label{3.2}
f_c=\sum_{i=0}^{m-1}a_{c,i}f_{c,i}^{r^i}.
\end{equation}

\item[(iii)] For each $c\in\f_q$,
\begin{equation}\label{3.3}
\text{\rm gcd}\bigl(\det A_c,\ (\x^e-1)/(\x-1)\bigr)=1,
\end{equation}
where
\begin{equation}\label{3.4}
A_c=\left[
\begin{matrix}
a_{c,0}\widetilde f_{c,0} & a_{c,1}\widetilde f_{c,1} &\cdots& a_{c,m-1}\widetilde f_{c,m-1}\cr
a_{c,m-1}^r\widetilde f_{c,m-1}\x & a_{c,0}^r\widetilde f_{c,0} &\cdots& a_{c,m-2}^r\widetilde f_{c,m-2}\cr
\vdots&\vdots&&\vdots\cr
a_{c,1}^{r^{m-1}}\widetilde f_{c,1}\x & a_{c,2}^{r^{m-1}}\widetilde f_{c,2}\x &\cdots& a_{c,0}^{r^{m-1}}\widetilde f_{c,0}
\end{matrix}\right].
\end{equation}
\end{itemize}
\end{prop}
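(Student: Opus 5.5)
The strategy is to show $f$ is injective on each fibre $S_e^{-1}(c)$, $c\in\f_q$, and that $f$ maps distinct fibres into distinct fibres. Write $S=S_e=\text{Tr}_{q^e/q}$. By (i), $S\circ f=\bar f\circ S$ on $\f_{q^e}$, so $f(S^{-1}(c))\subseteq S^{-1}(\bar f(c))$. Since $\bar f$ permutes $\f_q$, the sets $S^{-1}(\bar f(c))$ for $c\in\f_q$ are pairwise disjoint and cover $\f_{q^e}$. Each fibre has $q^{e-1}$ elements. It therefore suffices to prove that $f$ is injective on each $S^{-1}(c)$.

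Fix $c$ and take $x,y\in S^{-1}(c)$ with $f(x)=f(y)$. By (ii), $f_c(x)+b_c=f_c(y)+b_c$. Since $f_c$ is $q$-linearized, hence $\f_q$-linear, we get $f_c(x-y)=0$ with $x-y\in\ker S$. So it suffices to show that $f_c$ has no nonzero root in $\ker S=\{z\in\f_{q^e}:S(z)=0\}$. Note that $\ker S=\{z^q-z: z\in\f_{q^e}\}$, because $\x^q-\x$ has kernel $\f_q$ on $\f_{q^e}$ and image of size $q^{e-1}$ inside $\ker S$.

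The key step is to translate the linear-algebra condition into the gcd condition (iii). View $\f_{q^e}$ as a module over $\f_q[\x]$ with $\x$ acting as the Frobenius $z\mapsto z^q$. Then $\ker S$ is the submodule annihilated by $(\x^e-1)/(\x-1)$.

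The complication is that $f_c$ has coefficients $a_{c,i}\in\f_q$ multiplying $r^i$-powers of $\f_r$-linearized pieces, so $f_c$ is only $\f_r$-linear, not $\f_q$-linear. It is, however, $q$-linearized in the sense that it commutes with the $q$-Frobenius up to twisting the coefficients by powers of $r$. To handle this, apply $r^j$-Frobenius for $j=0,\dots,m-1$ to the equation $f_c(z)=0$. This yields a system of $m$ equations in the unknowns $z,z^r,\dots,z^{r^{m-1}}$. In those equations each $z^{r^{i+j}}$ with $i+j\ge m$ is rewritten as $(z^{r^{i+j-m}})^q$, which introduces the factor $\x$ below the diagonal. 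The resulting coefficient matrix, read as a matrix over $\f_q[\x]$ acting on $\f_{q^e}^m$ with $\x$ as $q$-Frobenius, is exactly $A_c$ from \eqref{3.4}; the conventional associates $\widetilde f_{c,i}$ appear because the $f_{c,i}$ have coefficients in $\f_r$ and so commute appropriately.

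Multiplying by the adjugate of $A_c$ shows that $\det A_c$, evaluated as a $q$-linearized operator, annihilates each $z^{r^j}$, in particular $z$. The annihilator of $z$ in $\f_q[\x]$ then contains both $\det A_c$ and $(\x^e-1)/(\x-1)$. By \eqref{3.3} it contains $1$, so $z=0$. Hence $f_c$ is injective on $\ker S$, and $f$ is injective on each fibre.

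The main obstacle is verifying carefully that the Frobenius-twisted system has coefficient matrix exactly $A_c$, with the correct placement of the $a_{c,i}^{r^j}$ and of the factors $\x$. It is also necessary to check that the adjugate argument is legitimate even though the module structure mixes the scalar actions of $\f_q$ with Frobenius. I would handle this by treating the $m$-tuple $(z,z^r,\dots,z^{r^{m-1}})$ as a vector and checking that the entries of $A_c$ commute with one another as operators, which holds because $\widetilde f_{c,i}\in\f_r[\x]$ and $a_{c,i}^{r^j}\in\f_q$ commute with the $q$-Frobenius.
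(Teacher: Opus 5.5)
The paper gives no proof of Proposition~\ref{P3.1}; it is quoted from \cite{Fernando-Hou-FFA-2015}, so there is no in-text argument to compare against. Your proof is correct and is the natural route. The twisted-circulant shape of $A_c$, with the factor $\x$ occurring exactly in the strictly lower triangle, is precisely what your Frobenius system produces. Concretely, set $\sigma(z)=z^q$ and $z_k=z^{r^k}$. Since $f_{c,i}\in\mathbb{F}_r[\x]$, you have $f_{c,i}^{r^i}(z)=\widetilde f_{c,i}(\sigma)(z^{r^i})$. Raising $f_c(z)=0$ to the power $r^j$ then gives
\[
\sum_{i=0}^{m-1} a_{c,i}^{r^j}\,\widetilde f_{c,i}(\sigma)\bigl(\sigma^{\epsilon_{ij}}(z_{k})\bigr)=0,\qquad k\equiv i+j \pmod{m},
\]
where $\epsilon_{ij}=1$ if and only if $i+j\ge m$, that is, if and only if $k<j$. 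This is exactly the $j$-th row of \eqref{3.4}, counting rows from $0$.

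There are two small points to fix or tidy.
\begin{itemize}
\item Your sentence ``since $f_c$ is $q$-linearized, hence $\mathbb{F}_q$-linear'' is false for $m>1$, because $f_c$ is only $r$-linearized. You only use additivity of $f_c$ at that step, so the reduction to nonzero roots of $f_c$ in $\ker S$ still stands.
\item The obstacle you flag at the end is not a real one. Because $\sigma$ fixes $\mathbb{F}_q$, the map $h\mapsto h(\sigma)$ is a ring homomorphism from the commutative ring $\mathbb{F}_q[\x]$ to $\mathrm{End}(\mathbb{F}_{q^e})$. Hence the identity $\operatorname{adj}(A_c)A_c=(\det A_c)I$, applied to $(z_0,\dots,z_{m-1})^T$, gives $(\det A_c)(\sigma)z=0$ directly. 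Combined with $\bigl((\x^e-1)/(\x-1)\bigr)(\sigma)z=S(z)=0$ and \eqref{3.3}, this forces $z=0$.
\end{itemize}
Your description $\ker S=\{z^q-z\}$ is correct but is never used.
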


The following two theorems yield two new families of permutation polynomials over finite fields of even characteristic which would also explain two unexplained cases of  \cite[Table~1]{Fernando-Hou-FFA-2015}. 

\begin{thm}\label{T3.25}
Let $q=4$ and $n=1\cdot q^0+2\cdot q^1+2\cdot q^2+1\cdot q^e+2\cdot q^{e+2}$. Let $e>2$ be even such that $e\not\equiv 0\pmod{10}$. Then $g_{n,q}=\x^2+S_e^2+S_3S_e+\x^{2q}\,S_e^3$ is a PP of $\mathbb{F}_{q^e}$.
\end{thm}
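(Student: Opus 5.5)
The plan has two parts. First I establish the closed form of $g_{n,q}$ stated in the theorem. Then I verify the three hypotheses of Proposition~\ref{P3.1} with $r=2$, $m=2$, $q=4$.

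\emph{Step 1: the closed form.} I would start from a base case covered by Theorem~\ref{T6.1a}. For $q=4$ that theorem handles $n=1+q^{a_1}+q^{b_1}+q^{a_2}+q^{b_2}$. I would then peel off the remaining base-$4$ digits of $n=1+2q+2q^2+q^e+2q^{e+2}$ using Proposition~\ref{lem 2.2} in the form $g_{l+q^k}=g_{l+1}+S_kg_l$, exactly as in the proofs of Lemmas~\ref{lem 3.5} and~\ref{lem 3.6}. The auxiliary terms $g_l$ that appear either have base-$4$ weight $5$, and so fall under Theorem~\ref{T6.1a}, or have weight at most $4$, where the $g$'s are known polynomials in the $S_k$.

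This produces an expression in $\x$, $S_1,S_2,S_3$, $S_e$ and $S_{e+2}$. On $\mathbb{F}_{q^e}$ I would reduce it using $\x^{q^e}=\x$, which gives $S_{e+2}=S_e+S_2^{q^e}=S_e+S_2$, together with characteristic $2$. The goal is to arrive at $\x^2+S_e^2+S_3S_e+\x^{2q}S_e^3$. I expect this bookkeeping to be the main obstacle: many cross terms must cancel mod $2$. If a direct attack gets messy, I would first fix a small even $e$ (say $e=4$) as a sanity check, and then run the recursion symbolically.

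\emph{Step 2: hypothesis (i).} Put $f=g_{n,q}$ and $c=S_e(x)\in\mathbb{F}_4$, so that $c^3\in\{0,1\}$, with $c^3=1$ exactly when $c\neq 0$. I compute $S_e(f(x))$ term by term:
\begin{itemize}
\item $S_e(x^2)=c^2$;
\item $S_e(c^2)=e\,c^2=0$, since $e$ is even;
\item $S_e(cS_3(x))=c\cdot 3c=c^2$;
\item $S_e(c^3x^{8})=c^3\cdot c^2$.
\end{itemize}
The total is $c^2+c^2+c^5=c^2$ for every $c\in\mathbb{F}_4$. So $\bar f(\x)=\x^2$, which is a PP of $\mathbb{F}_4$, and the diagram commutes.

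\emph{Step 3: hypothesis (ii).} On $S_e^{-1}(c)$ we have $f(x)=f_c(x)+c^2$ with $f_c=\x^2+cS_3+c^3\x^{8}$.
\begin{itemize}
\item For $c=0$: take $f_{c,0}=0$, $a_{c,1}=1$, $f_{c,1}=\x$.
\item For $c\ne0$: $f_c=c(\x+\x^4+\x^{16})+(\x+\x^4)^2$. So take $a_{c,0}=c$, $f_{c,0}=\x+\x^4+\x^{16}$, and $a_{c,1}=1$, $f_{c,1}=\x+\x^4$. Both $f_{c,i}$ are $4$-linearized with coefficients in $\mathbb{F}_2$, as required.
\end{itemize}

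\emph{Step 4: hypothesis (iii).} The conventional associates are $\widetilde f_{c,0}=1+\x+\x^2$ and $\widetilde f_{c,1}=1+\x$. With $m=2$, $A_c$ is the $2\times 2$ matrix
$$\left[\begin{matrix}a_{c,0}\widetilde f_{c,0} & a_{c,1}\widetilde f_{c,1}\cr a_{c,1}^2\widetilde f_{c,1}\x & a_{c,0}^2\widetilde f_{c,0}\end{matrix}\right].$$
\begin{itemize}
\item For $c\neq0$: $\det A_c=c^3(1+\x+\x^2)^2+\x(1+\x)^2=1+\x+\x^2+\x^3+\x^4=(\x^5-1)/(\x-1)$.
\item For $c=0$: $\det A_c=\x$, which is trivially coprime to $(\x^e-1)/(\x-1)$.
\end{itemize}
The gcd of $\x^4+\x^3+\x^2+\x+1$ with $(\x^e-1)/(\x-1)$ is $1$ exactly when $5\nmid e$. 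For even $e$ this is the hypothesis $e\not\equiv0\pmod{10}$. Proposition~\ref{P3.1} then gives that $g_{n,q}$ is a PP of $\mathbb{F}_{q^e}$.
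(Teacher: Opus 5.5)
Your proposal is correct and is essentially the paper's proof. The paper likewise only asserts the closed form (``it can be shown''), then applies Proposition~\ref{P3.1} with $r=m=2$, $\bar f=\x^2$, $f_{c,0}=S_3$, $f_{c,1}=\x+c^3\x^q$, $a_{c,0}=c$, $a_{c,1}=1$, $b_c=c^2$; this is your choice of data written uniformly in $c$, since $c^3\in\{0,1\}$. Your explicit check of (i), and your identification of $\det A_c$ as $\x$ for $c=0$ and $(\x^5-1)/(\x-1)$ for $c\neq0$, make explicit what the paper leaves implicit when it invokes $e\not\equiv 0\pmod{10}$.
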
  

\begin{proof}
It can be shown that 
$$g_{n,q}=S_eS_3+(\x+S_e^3\x^{q})^2+S_e^2.$$

We show that the conditions (i)-(iii) in Proposition~\ref{P3.1} are satisfied when $r=2$, $m=2$, and $q=4$. Condition (i) is satisfied with $\overline{f}=\x^2$. For each $c\in \mathbb{F}_q$ and $\x\in S_e^{-1}(c)$, 
$$f(\x)=cS_3+(\x+c^3\x^{q})^2+c^2,$$
Hence (ii) is satisfied with
\[
f_{c,0}=S_3,\,\,\,f_{c,1}=\x+c^3\x^q,\ a_{c,0}=c,\ a_{c,1}=1,\ b_c=c^2.
\]
We have 
\[
\det A_c=\left|\begin{matrix} c(1+\x+\x^{2})&(1+c^3X)\cr \x(1+c^3\x)& c^2(1+\x+\x^{2})\end{matrix}\right|=c^3(1+\x+\x^2)^2-\x(1+c^3\x^2).
\]
Since $e\not\equiv 0\pmod{10}$, $\text{gcd}(\det A_c,\ (\x^e-1)/(\x-1))=1$, and hence (iii) is also satisfied.

\end{proof}

\vskip 0.2in

\begin{exmp} 
Let $q=4$ and $e=6$. Then $g_{135209,q}$ is a PP of $\mathbb{F}_{q^e}$. We have 
$$135209=1\cdot q^0+2\cdot q^1+2\cdot q^2+1\cdot q^6+2\cdot q^{8}.$$ It can be shown that $g_{n,q}=S_6S_3+(\x+S_6^3\x^{4})^2+S_6^2.$ For each $c\in \mathbb{F}_q$ and $\x\in S_e^{-1}(c)$, $g_{n,q}=cS_3+(\x+c^3\x^{4})^2+c^2$, and the result follows from the fact that
$e\not\equiv 0\pmod{10}$ and $\text{gcd}(c^3(1+\x+\x^2)^2-\x(1+c^3\x^2),\ (\x^6-1)/(\x-1))=1$. 
\end{exmp} 

\vskip 0.2in 


\vskip 0.2in

\begin{thm}\label{T3.27}
Let $q=4$ and $n=1\cdot q^0+2\cdot q^2+2\cdot q^{e-1}+1\cdot q^e+2\cdot q^{e+2}$. Let $e>3$ be even such that $e\not\equiv 0\pmod{10}$. Then $g_{n,q}$ is a PP of $\mathbb{F}_{q^e}$.
\end{thm}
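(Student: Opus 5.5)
The plan is to follow the proof of Theorem~\ref{T3.25}. First I would find a closed form of $g_{n,q}$ as a function on $\mathbb{F}_{q^e}$, then verify conditions (i)--(iii) of Proposition~\ref{P3.1} with $r=2$, $m=2$, $q=4$.

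\textbf{Closed form.} The base-$4$ digits of $n$ are $1,2,2,1,2$ at positions $0,2,e-1,e,e+2$. The digit sum is $8$, so Theorem~\ref{T6.1a} does not apply directly. Instead I will peel off powers of $q$ with Proposition~\ref{lem 2.2}, $g_{l+q^k}=g_{l+1}+S_kg_l$. Start from the top digits $q^{e+2}$ and $q^e$, which contribute factors of $S_{e+2}$ and $S_e$. Each step lowers the weight, until we reach exponents of weight at most $5$ of the shape $1+q^{a_1}+q^{b_1}+q^{a_2}+q^{b_2}$, where Theorem~\ref{T6.1a} gives the value explicitly. On $\mathbb{F}_{q^e}$ we have $\x^{q^e}=\x$, so $S_{e+2}=S_e+\x+\x^q$ and $S_{e+1}=S_e+\x$. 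By analogy with Theorem~\ref{T3.25}, I expect to land at a form
\[
g_{n,q}=S_e\,L_1+(L_2+S_e^3L_3)^2+S_e^2,
\]
where $L_1,L_2,L_3$ are $4$-linearized polynomials in $\x,\x^{q^2},\x^{q^{e-1}}$ with coefficients in $\mathbb{F}_2$. The digit at $q^{e-1}$ should play the role that $q^1$ played before. In that theorem the analogous exponents gave $L_1=S_3$, $L_2=\x$ and $L_3=\x^q$.

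\textbf{Conditions (i) and (ii).} Applying $S_e$ to the closed form should give $\bar f=\x^2$, a permutation polynomial of $\mathbb{F}_4$, which is condition (i). Condition (ii) is read off by fixing $S_e(\x)=c$:
\[
f_{c,0}=L_1,\quad f_{c,1}=L_2+c^3L_3,\quad a_{c,0}=c,\quad a_{c,1}=1,\quad b_c=c^2 .
\]

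\textbf{Condition (iii), the main obstacle.} The conventional associates now involve $\x^{e-1}$. Modulo $\x^e-1$ this is $\x^{-1}$, so I multiply $\det A_c$ by a suitable power of $\x$; this does not change the gcd with $(\x^e-1)/(\x-1)$. The result is a small polynomial $D_c(\x)$ over $\mathbb{F}_4$.
\begin{itemize}
\item For $c=0$ the matrix is anti-diagonal, and $D_0$ should be a unit times a power of $\x$ times a simple factor with no roots among the $e$-th roots of unity other than $1$.
\item For $c\in\mathbb{F}_4^*$, we have $c^3=1$, so only one polynomial needs to be checked. I would factor it over $\mathbb{F}_2$ and $\mathbb{F}_4$. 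A common root with $(\x^e-1)/(\x-1)$ is a root of unity $\zeta\neq 1$ of order dividing $e$.
\end{itemize}
The aim is to show that every such root has order $5$ or $10$, for example through a factor like $\x^4+\x^3+\x^2+\x+1$ or its $\mathbb{F}_4$-factors. Any other order (for instance $3$) must be ruled out by direct substitution, using that $e$ is even. Then $e\not\equiv 0\pmod{10}$, together with $e$ even, excludes order $5$ or $10$ dividing $e$, and the gcd is $1$.

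The risks are the accuracy of the reduction in the closed-form step and the exact factorization in condition (iii). I would check both numerically for $e=4$ and $e=6$ before writing the general argument.
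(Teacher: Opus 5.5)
Your proposal is the paper's proof: the paper's closed form $S_eS_3+\bigl(S_e^3(\x+\x^q)+(S_e^3+1)\x^{q^{e-1}}\bigr)^2+S_e^2$ is exactly your shape with $L_1=S_3$, $L_2=\x^{q^{e-1}}$, $L_3=\x+\x^q+\x^{q^{e-1}}$, and it uses the same data $\bar f=\x^2$, $a_{c,0}=c$, $a_{c,1}=1$, $b_c=c^2$ in Proposition~\ref{P3.1}. With these choices $\det A_0=\x^{2e-1}$ and $\det A_c=1+\x+\x^2+\x^3+\x^4$ for $c\neq 0$, so the only obstruction is a primitive $5$th root of unity, as you predicted; note also that condition (i) needs $e$ even, because $S_e(S_e^2)=e\,c^2$ must vanish.
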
  

\begin{proof}

It can be shown that 

$$g_{n,q}=S_eS_3+\Big(S_e^3(\x+\x^q)+(S_e^3+1)\x^{q^{e-1}}\Big)^2+S_e^2.$$

We show that the conditions (i)-(iii) in Proposition~\ref{P3.1} are satisfied when $r=2$, $m=2$, and $q=4$. Condition (i) is satisfied with $\overline{f}=X^2$. For each $c\in \mathbb{F}_q$ and $x\in S_e^{-1}(c)$, 
$$f(x)=cS_3+\Big(c^3(\x+\x^q)+(c^3+1)\x^{q^{e-1}}\Big)^2+c^2,$$
Hence (ii) is satisfied with
\[
f_{c,0}=S_3,\,\,\,f_{c,1}=c^3(\x+\x^q)+(c^3+1)\x^{q^{e-1}},\ a_{c,0}=c,\ a_{c,1}=1,\ b_c=c^2.
\]
We have 
\[
\begin{split}
\det A_c&=\left|\begin{matrix} c(1+\x+\x^{2})& c^3(1+\x)+(c^3+1)\x^{e-1}\cr \x (c^3(1+\x)+(c^3+1)\x^{e-1})& c^2(1+\x+\x^{2})\end{matrix}\right|\cr
&=c^3(1+\x+\x^2)^2-\Big(c^3(1+\x)+(c^3+1)\x^{e-1}\Big)\cdot \Big(\x (c^3(1+\x)+(c^3+1)\x^{e-1})\Big).
\end{split}
\]
Since $e\not\equiv 0\pmod{10}$, $\text{gcd}(\det A_c,\ (\x^e-1)/(\x-1))=1$, and hence (iii) is also satisfied.

\end{proof}

\vskip 0.2in

\begin{exmp} 
Let $q=4$ and $e=6$. Then $g_{137249,q}$ is a PP of $\mathbb{F}_{q^e}$.  We have 
$$137249=1\cdot q^0+2\cdot q^2+2\cdot q^{e-1}+1\cdot q^e+2\cdot q^{e+2}.$$ It can be shown that $g_{n,q}=S_6S_3+\Big(S_6^3(\x+\x^4)+(S_6^3+1)\x^{4^{5}}\Big)^2+S_6^2.$ For each $c\in \mathbb{F}_q$ and $\x\in S_e^{-1}(c)$, $g_{n,q}=cS_3+\Big(c^3(\x+\x^4)+(c^3+1)\x^{4^{5}}\Big)^2+c^2.$, and the result follows from the fact that
$e\not\equiv 0\pmod{10}$ and $$\text{gcd}(c^3(1+\x+\x^2)^2-(c^3(1+\x)+(c^3+1)\x^{5})\cdot (\x (c^3(1+\x)+(c^3+1)\x^{5})),\ (\x^6-1)/(\x-1))=1.$$
\end{exmp} 

The following proposition explains the entry $n=4289$ when $e=5$ in the table. 

\begin{prop}\label{T4.5}
Let $q=4$ and $n=1+(q-1)\cdot q^{e-2}+1\cdot q^{e+1}$, where $e>2$. 
Then $g_{n,q}=S_eS_3^{q^{e-2}}+(S_2S_3)^{q^{e-2}}$. In particular, $g_{n,q}$ is a PP of $\mathbb{F}_{q^5}$. 
\end{prop}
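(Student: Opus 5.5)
The plan has two parts. First I prove the closed form for every $e>2$. Then I reduce the permutation claim on $\mathbb{F}_{q^5}$ to finitely many residue classes of $e$ and check each one.

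\emph{Step 1: the identity.} Write $l=1+3q^{e-2}=1+(q-1)q^{e-2}$, so that $n=l+q^{e+1}$. Proposition~\ref{lem 2.2} gives
$$g_{n,q}=g_{l+1,q}+S_{e+1}\,g_{l,q},\qquad l+1=(q-2)q^0+(q-1)q^{e-2}.$$
\begin{itemize}
\item By Lemma~\ref{lem 3.3}(1), $g_{l+1,q}=(S_{e-2}-S_0)^{q-2}=S_{e-2}^{2}$.
\item For $g_{l,q}$, apply Proposition~\ref{lem 2.2} once more: $g_{q^{e-1}}=g_{q^{e-2}\cdot 3+1}+S_{e-1}$-type relations. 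The cleanest route is $g_{(q-1)q^{e-2}+q^{k}}=g_{(q-1)q^{e-2}+1}+S_k\,g_{(q-1)q^{e-2}}$ with $k=e-2$ and $g_{(q-1)q^{e-2}}=-S_{e-2}^{q-1}$-type values. This expresses $g_{l,q}$ as a polynomial in $S_{e-2}$, $S_{e-1}$ and $x^{q^{e-2}}$.
\end{itemize}
Substituting, and using $S_{e+1}=S_{e-2}+(x+x^q+x^{q^2})^{q^{e-2}}=S_{e-2}+S_3^{q^{e-2}}$ together with characteristic $2$, I expect everything to collapse to
$$S_eS_3^{q^{e-2}}+(S_2S_3)^{q^{e-2}}.$$
The routine part is the bookkeeping of $S_{e-2}$, $S_e$ and $S_{e+1}$. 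I would verify the collapse on $e=3$ first, where $g_{n,q}$ can also be computed from Waring's formula \eqref{AB2}, and then do the general algebra.

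\emph{Step 2: reduction on $\mathbb{F}_{q^5}$.} On $\mathbb{F}_{q^5}$ the Frobenius $\phi=x\mapsto x^q$ satisfies $\phi^5=1$. Hence
$$S_{k+5}=\mathrm{Tr}+S_k,\qquad S_{k+10}=S_k,$$
where $\mathrm{Tr}=\mathrm{Tr}_{q^5/q}$ and the second relation uses characteristic $2$. In addition, $y\mapsto y^{q^{e-2}}$ is a field automorphism of $\mathbb{F}_{q^5}$. So, as a map on $\mathbb{F}_{q^5}$,
$$g_{n,q}=\big(S_e+S_2^{q^{e-2}}\big)\,S_3^{q^{e-2}},$$
and it depends only on $e \bmod 10$.

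Composing on the left with the automorphism $\phi^{-(e-2)}$ does not affect bijectivity. After this, $g_{n,q}$ becomes $(L_e(x)+S_2)\,S_3$, where $L_e=\phi^{-(e-2)}S_e$ is a $q$-linearized map on $\mathbb{F}_{q^5}$. This leaves at most ten explicit functions of the form (linearized)$\cdot$(linearized) to check.

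\emph{Step 3: the permutation check (main obstacle).} For each residue of $e$ I would apply Proposition~\ref{P3.1} with $r=2$, $m=2$, $e=5$. Take $\bar f=\x^2$, and check condition (i) by computing $\mathrm{Tr}$ of the product modulo the trace fibres. On each fibre $\mathrm{Tr}^{-1}(c)$, write $S_e=c\cdot(\text{const})+S_{e\bmod 5}$. The product $S_3\cdot(\ldots)$ must then be rewritten as a $q$-linearized polynomial plus a constant, as in the proofs of Theorems~\ref{T3.25} and~\ref{T3.27}. This requires exploiting that on $\mathbb{F}_{q^5}$ one has $S_3+\phi^3S_2=\mathrm{Tr}$, so $S_3=c+\phi^3 S_2$ on the fibre. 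The quadratic term should then become a square of a linearized expression, since $(\phi^3S_2)S_2$-type products telescope via $S_2+\phi^2S_3=\mathrm{Tr}$.

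With that form in hand, I compute $\det A_c$ and verify that $\gcd\big(\det A_c,\ \x^4+\x^3+\x^2+\x+1\big)=1$ for each $c\in\mathbb{F}_4$. Since $\x^4+\x^3+\x^2+\x+1$ is irreducible over $\mathbb{F}_2$ and splits over $\mathbb{F}_{16}$ into factors over $\mathbb{F}_4$, this is a finite check.

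The hardest part is the linearization in Step 3. If a residue class resists it, my fallback is a direct argument. Because $g$ depends only on $e\bmod 10$, there are only finitely many explicit maps on $\mathbb{F}_{4^5}$ to consider. For each, I would show injectivity by assuming $g(x)=g(y)$ and splitting on whether $S_3(x)=S_3(y)$.
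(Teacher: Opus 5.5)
There is a genuine gap, and it starts with how you read the last sentence.

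\textbf{The intended claim.} The ``in particular'' refers only to $e=5$, i.e.\ $n=1+3\cdot 4^3+4^6=4289$, the Table~1 entry this proposition explains. It is not a claim about every $e>2$ over $\mathbb{F}_{q^5}$. Under your reading the claim is false, so your residue-by-residue plan modulo $10$ cannot succeed. Since $S_e=S_{e-2}+S_2^{q^{e-2}}$, your first factor $S_e+S_2^{q^{e-2}}$ in Step 2 is just $S_{e-2}$, and the closed form is $g_{n,q}=S_{e-2}\,S_3^{q^{e-2}}$.
\begin{itemize}
\item For $e\equiv 2\pmod{10}$ this vanishes identically on $\mathbb{F}_{q^5}$, because $S_{10}=2\,\mathrm{Tr}=0$ there.
\item For $e\equiv 7\pmod{10}$ it is $\mathrm{Tr}(x)S_3(x)$, which sends all $q^4$ trace-zero elements to $0$.
\end{itemize}

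\textbf{Proposition~\ref{P3.1} fails even at $e=5$.} There $g_{n,q}=S_3^{q^3+1}$. With $y=S_3(x)$ one has $\mathrm{Tr}(y)=\mathrm{Tr}(x)$, and $y$ runs over all of $\mathbb{F}_{q^5}$. Suppose $\mathrm{Tr}(y^{q^3+1})$ were a function of $\mathrm{Tr}(y)$. Then it would be constant on $\ker\mathrm{Tr}$, so its polar form $\mathrm{Tr}\bigl(y(z^{q^2}+z^{q^3})\bigr)$ would vanish for all trace-zero $y,z$. This forces $z^{q^2}+z^{q^3}\in\mathbb{F}_q$ for every $z\in\ker\mathrm{Tr}$. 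That is impossible: the map $z\mapsto z^{q^2}+z^{q^3}$ has kernel $\mathbb{F}_q$, and $\mathbb{F}_q\cap\ker\mathrm{Tr}=0$. So it is injective on the $4$-dimensional space $\ker\mathrm{Tr}$, and its image cannot lie in $\mathbb{F}_q$. Hence condition (i) of Proposition~\ref{P3.1} fails, and the linearization you call the main obstacle does not exist.

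\textbf{The missing idea.} You need the simplification above. At $e=5$ it gives
\[
g_{n,q}=S_3^{q^3+1}=\x^{q^3+1}\circ S_3,
\]
a composition of two permutations of $\mathbb{F}_{4^5}$. The map $\x^{q^3+1}$ permutes because $\gcd(q^3+1,q^5-1)=\gcd(65,1023)=1$. The linearized map $S_3$ is a bijection because $\gcd(1+\x+\x^2,\x^5-1)=1$. This is the paper's argument.

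\textbf{Step 1.} Your route is the paper's, but it uses a wrong value. You take $g_{(q-1)q^{e-2}}$ to be ``$-S_{e-2}^{q-1}$-type''. That index has base-$q$ weight $q-1$, so the value is $-1$. Then $0=g_{q^{e-1}}=g_{l}-S_{e-2}$ gives $g_{l,q}=S_{e-2}$. Hence
\[
g_{n,q}=S_{e-2}^2+S_{e+1}S_{e-2}=S_{e-2}\bigl(S_{e-2}+S_{e+1}\bigr)=S_{e-2}S_3^{q^{e-2}},
\]
which is the stated formula.
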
 

\begin{proof}

Let $n=1+(q-1)\cdot q^{e-2}+1\cdot q^{e+1}$. Then it can be shown that $$g_{n,q}=S_3^{q^{e-2}}\,S_{e-2}.$$ 
Let $e=5$. Then we have $g_{n,q}=S_3^{q^{3}+1}$, which is a PP since $\text{gcd}(q^3+1,q^5-1)=1$.

\end{proof}

\begin{ques}
Is Proposition~\ref{T4.5} a sporadic case? If it is not, find conditions on $e$ for which the polynomial $g_{n,q}$ is a PP of $\mathbb{F}_{4^e}.$   
\end{ques}

\section{Multivariate Case} 

In this section, we introduce the bivariate case of the polynomial $g_{n,q}$.

\subsection{Bivariate case}

In $\mathbb{F}_q[\x,\y]$, we have $$\displaystyle{(\x+\y)^q-(\x+\y)=\prod_{a\in \mathbb{F}_q}(\x+\y+a)}.$$ 

Let ${\tt t}$ be another indeterminate and replace $\x+\y$ by ${\tt t}+\x+\y$ to obtain 
$$\displaystyle{({\tt t}+\x+\y)^q-({\tt t}+\x+\y)=\prod_{a\in \mathbb{F}_q}({\tt t}+\x+\y+a)}.$$

The left-hand side is equal to ${\tt t}^q-{\tt t}+(\x^q-\x)+(\y^q-\y)$. 

The product $\displaystyle{\prod_{a\in \mathbb{F}_q}({\tt t}+\x+\y+a)}$ can be written in terms of the elementary symmetric polynomials (as described in Preliminaries) as follows:

$$\displaystyle{\prod_{a\in \mathbb{F}_q}({\tt t}+\x+\y+a)}=\sum_{k=0}^{q}\,\Big(\sigma_k(\x+\y+a)_{a\in \mathbb{F}_q}\Big){\tt t}^{q-k}.$$

We have 

$${\tt t}^q-{\tt t}+(\x^q-\x)+(\y^q-\y)=\sum_{k=0}^{q}\,\sigma_k\Big((\x+\y+a)_{a\in \mathbb{F}_q}\Big)\,{\tt t}^{q-k}.$$

By comparing the coefficients of ${\tt t}$, we get

\begin{equation}
\sigma_k\Big((\x+\y+a)_{a\in \mathbb{F}_q}\Big) = \begin{cases}
    1 & \text{if}\,\, k=0, \\
    -1 & \text{if}\,\, k = q-1, \\
    (\x^q-\x)+(\y^q-\y) & \text{if} \,\,k = q, \\
    0 & \text{otherwise}. 
\end{cases}
\end{equation}

Let $n\geq 0$ be an integer. Then by Waring's formula, we have 
\[
\begin{split}
\sum_{a\in \mathbb{F}_q}(\x+\y+a)^n&=\sum_{\alpha(q-1)+\beta q=n}\,(-1)^{\alpha} \frac{(\alpha+\beta-1)!n}{\alpha!\beta!}(-1)^{\alpha}(\x^q-\x+\y^q-\y)^{\beta}\cr  
&\stackrel{l=\alpha + \beta}{=}\sum_{\frac{n}{q}\leq l \leq \frac{n}{q-1}}\,(-1)^{\alpha} \frac{(l-1)!n}{(lq-n)!(n-l(q-1))!}(-1)^{\alpha}(\x^q-\x+\y^q-\y)^{n-l(q-1)}\cr
&= \sum_{\frac{n}{q}\leq l \leq \frac{n}{q-1}}\,\frac{n}{l}\,\binom{l}{n-l(q-1)}\,(\x^q-\x+\y^q-\y)^{n-l(q-1)}
\end{split}
\]

Set 

$$g_{n,q}(\x,\y)=\sum_{\frac{n}{q}\leq l \leq \frac{n}{q-1}}\,\frac{n}{l}\,\binom{l}{n-l(q-1)}\,(\x+\y)^{n-l(q-1)}\,\,\in \mathbb{Z}[\x,\y]$$

Equivalently, 

$$g_{n,q}(\x,\y)=\sum_{\frac{n}{q}\leq l \leq \frac{n}{q-1}}\,\frac{n}{l}\,\binom{l}{n-l(q-1)}\,(e_1(\x,\y))^{n-l(q-1)}\,\,\in \mathbb{Z}[\x,\y]$$

The coefficients of the polynomial are integers because the coefficients in the Waring's formula are integers. In $\mathbb{F}_q[\x,\y]$, we have 

$$\displaystyle{\sum_{a\in \mathbb{F}_q}(\x+\y+a)^n=g_{n,q}(\x^q-\x,\y^q-\y)}.$$

\begin{rmk}\label{rmk 5.3}
We point out to the reader that the bivariate polynomial $g_{n,q}(\x,\y)$ is the composition of the univariate polynomial $g_{n,q}(\x)$ and the elementary symmetric polynomial $e_1(\x,\y)$:

\[
\begin{split}
g_{n,q}(\x,\y)&=\sum_{\frac{n}{q}\leq l \leq \frac{n}{q-1}}\,\frac{n}{l}\,\binom{l}{n-l(q-1)}\,(e_1(\x,\y))^{n-l(q-1)}\cr
&=\sum_{\frac{n}{q}\leq l \leq \frac{n}{q-1}}\,\frac{n}{l}\,\binom{l}{n-l(q-1)}\,\x^{n-l(q-1)}\circ e_1(\x,\y)\cr
&=g_{n,q}(\x)\circ e_1(\x,\y)
\end{split}
\]
\end{rmk}

For the rest of the section, we will present some results on the polynomial in two variables $g_{n,q}(\x,\y)$ and several variables $g_{n,q}(\x_1,\x_2,\ldots, \x_k)$ that will be useful in Sections 6 \& 7. We skip the proofs of the results as they can be obtained by taking the composition the polynomial $g_{n,q}(\x)$ and the elementary symmetric polynomials $e_1(\x,\y)$ or $e_1(\x_1,\x_2,\ldots, \x_k)$ in the corresponding results in \cite{Hou-FFA-2012}.

First, we will present the explicit expression of the polynomial in several variables $g_{n,q}(\x_1,\x_2,\ldots,\x_k) \in \mathbb{Z}[\x_1,\x_2,\ldots,\x_k]$.

$$g_{n,q}(\x_1,\x_2,\ldots,\x_k)=\sum_{\frac{n}{q}\leq l \leq \frac{n}{q-1}}\,\frac{n}{l}\,\binom{l}{n-l(q-1)}\,(e_1(\x_1,\x_2,\ldots,\x_k))^{n-l(q-1)}\,\,$$

\noindent In $\mathbb{F}_q[\x_1,\x_2,\ldots, \x_k]$, we have 

$$\displaystyle{\sum_{a\in \mathbb{F}_q}(\x_1+\x_2+\ldots+\x_k+a)^n=g_{n,q}(\x_1^q-\x_1,\ldots,\x_k^q-\x_k)}.$$

The recurrence relation of the polynomial $g_{n,q}(\x,\y)$ is given in the following proposition:

\begin{prop}
The polynomial $g_{n,q}(\x,\y)$ satisfies the recurrence relation
\begin{equation*}
\begin{cases}
g_{0,q}= \cdots = g_{q-2,q} = 0,\cr
g_{q-1,q}= -1, \cr
g_{n,q}(\x,\y)=e_1(\x,\y)\,g_{n-q,q}(\x,\y)+g_{n-q+1}(\x,\y), \kern 0.5cm n\geq q.
\end{cases}
\end{equation*}
\end{prop}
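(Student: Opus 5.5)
The recurrence follows directly from Remark~\ref{rmk 5.3}, which gives $g_{n,q}(\x,\y)=g_{n,q}(\x)\circ e_1(\x,\y)$. Substituting $e_1(\x,\y)=\x+\y$ for $\x$ is a ring homomorphism $\mathbb{Z}[\x]\to\mathbb{Z}[\x,\y]$, so any polynomial identity among the univariate $g_{n,q}(\x)$ carries over to the bivariate polynomials. It therefore suffices to establish the univariate recurrence
\[
g_{0,q}=\cdots=g_{q-2,q}=0,\quad g_{q-1,q}=-1,\quad g_{n,q}(\x)=\x\,g_{n-q,q}(\x)+g_{n-q+1,q}(\x)\ (n\ge q),
\]
and then apply the substitution $\x\mapsto e_1(\x,\y)$. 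The factor $\x$ becomes $e_1(\x,\y)$, and each $g_{m,q}(\x)$ becomes $g_{m,q}(\x,\y)$.

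For the univariate recurrence, I would work from the power-sum side and use the identity \eqref{AB1}. Set $P_n=\sum_{a\in\mathbb{F}_q}(\x+a)^n$. The elements $\x+a$ are the roots of $\mathtt{T}^q-\mathtt{T}-(\x^q-\x)$; this is the case $\y=0$ of the elementary symmetric function computation above, which shows that $\sigma_{q-1}=-1$ up to sign convention, $\sigma_q=\x^q-\x$, and all other $\sigma_k$ with $0<k<q$ vanish. Each root $r$ satisfies $r^n=r^{n-q+1}+(\x^q-\x)r^{n-q}$ for $n\ge q$, and summing over the roots gives
\[
P_n=P_{n-q+1}+(\x^q-\x)P_{n-q}.
\]
Writing $P_m=g_{m,q}(\x^q-\x)$, this becomes $g_{n,q}(\x^q-\x)=(\x^q-\x)g_{n-q,q}(\x^q-\x)+g_{n-q+1,q}(\x^q-\x)$.

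The main obstacle is passing from this identity in $\x^q-\x$ to an identity of polynomials. The uniqueness in \eqref{AB1} only holds in characteristic $p$, and the statement is over $\mathbb{Z}$. To avoid this, I would verify the recurrence directly from the explicit formula \eqref{AB2}. Write $c(n,l)=\frac{n}{l}\binom{l}{n-l(q-1)}$, the coefficient of $\x^{n-l(q-1)}$. After shifting $l$ by $1$ in the first term, the coefficient identity to check is
\[
c(n,l)=c(n-q,\,l-1)+c(n-q+1,\,l-1).
\]
Set $k=n-l(q-1)$, so that $j=l-k$ counts the factors equal to $\x^{q-1}$. The coefficient $c(n,l)$ is then the Waring/Lucas-type count $\frac{l-1+[\cdot]}{\cdot}\binom{\cdot}{\cdot}$, which satisfies a Pascal-type rule. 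This can be checked by expanding factorials, or more cleanly by noting that $c(n,l)$ equals the number of cyclic compositions of $n$ into $l$ parts from $\{q-1,q\}$ (up to sign), for which splitting on the last part gives the recurrence.

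For the initial values, the range $n/q\le l\le n/(q-1)$ is empty when $0<n<q-1$. For $n=q-1$ it gives $l=1$ and the constant coefficient $(q-1)$, which is $-1$ in characteristic $p$; in $\mathbb{Z}$ this must be read with the sign convention of Waring's formula. I would fix the sign conventions consistently at this point.
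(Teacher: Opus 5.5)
Your reduction is the same as the paper's. The paper offers no argument beyond composing Hou's univariate recurrence with $e_1(\x,\y)$, which is your substitution homomorphism $\x\mapsto\x+\y$. The problem is in how you propose to supply the univariate recurrence yourself.

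The ``main obstacle'' you name does not exist. If $F\in\mathbb{F}_p[\x]$ is nonzero of degree $d$, then $F(\x^q-\x)$ has degree $qd$ and the same leading coefficient, so $F\mapsto F(\x^q-\x)$ is injective. Hence your power-sum identity $P_n=P_{n-q+1}+(\x^q-\x)P_{n-q}$ already gives $g_{n,q}=\x\,g_{n-q,q}+g_{n-q+1,q}$ in $\mathbb{F}_p[\x]$. The initial values come from the same computation:
\begin{itemize}
\item $P_0=q=0$;
\item $P_n=\sum_k\binom{n}{k}\x^{n-k}\sum_{a}a^k$, where $\sum_a a^k=0$ for $0<k<q-1$ and $\sum_a a^{q-1}=-1$;
\item so $P_n=0$ for $n\le q-2$ and $P_{q-1}=-1$.
\end{itemize}

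The detour through $\mathbb{Z}$ fails at the point you postpone, and no choice of ``sign convention'' repairs it, since the signs in Waring's formula cancel. Over $\mathbb{Z}$, \eqref{AB2} gives $g_{q-1,q}=q-1$ and $g_{q,q}=q\x$ (plus the constant $1$ when $q=2$). So the step $n=q$, namely $g_{q,q}=\x\,g_{0,q}+g_{1,q}$, forces $g_{0,q}=q$; note that \eqref{AB2} does not define $g_{0,q}$ at all, and you never address $n=0$. Thus the recurrence with initial data $0,\dots,0,-1$ is false in $\mathbb{Z}[\x,\y]$ and holds only after reduction mod $p$. Over $\mathbb{Z}$ the correct initial data are $g_{0,q}=q$ and $g_{q-1,q}=q-1$, the power sums of the roots of $\mathtt{T}^q-\mathtt{T}-\x$.

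Your Pascal-type identity is nonetheless correct. Put $\alpha=lq-n$, $\beta=n-l(q-1)$ and $c(n,l)=n(l-1)!/(\alpha!\,\beta!)$. The identity then reduces to $(n-q)\beta+(n-q+1)\alpha=n(l-1)$, which is just $q\beta+(q-1)\alpha=n$. This proves the recurrence over $\mathbb{Z}$ for $n\ge q+1$, where every $l\ge 2$. You would still have to reduce mod $p$ and check $n=q$ and the initial values separately. Once you are working mod $p$, though, the power-sum argument above already finishes the proof, so the explicit-formula detour is unnecessary.
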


We can obtain an immediate generalization to $k$ variables of the above proposition as given below:

\begin{prop}
\noindent The polynomial $g_{n,q}(\x_1,\x_2,\ldots,\x_k)$ satisfies the recurrence relation
\begin{equation*}
\begin{cases}
g_{0,q}= \cdots = g_{q-2,q} = 0,\cr
g_{q-1,q}= -1, \cr
g_{n,q}(\x_1,\x_2,\ldots,\x_k)=e_1(\x_1,\x_2,\ldots,\x_k)\,g_{n-q,q}(\x_1,\x_2,\ldots,\x_k)+g_{n-q+1}(\x_1,\x_2,\ldots,\x_k), \cr\kern 0.5cm n\geq q.
\end{cases}
\end{equation*}
\end{prop}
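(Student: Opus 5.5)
The plan is to reduce the $k$-variable recurrence to the univariate recurrence for $g_{n,q}(\x)$ by composing with $e_1(\x_1,\ldots,\x_k)$. The explicit formula above shows that $g_{n,q}(\x_1,\ldots,\x_k)=g_{n,q}(\x)\circ e_1(\x_1,\ldots,\x_k)$, exactly as in Remark~\ref{rmk 5.3}. Composition with a fixed polynomial is a ring homomorphism $\mathbb{Z}[\x]\to\mathbb{Z}[\x_1,\ldots,\x_k]$, $h(\x)\mapsto h(e_1)$. So any polynomial identity among the $g_{n,q}(\x)$ transfers verbatim, with $\x$ replaced by $e_1$.

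First I would establish the univariate recurrence
\[
g_{0,q}=\cdots=g_{q-2,q}=0,\quad g_{q-1,q}=-1,\quad g_{n,q}(\x)=\x\,g_{n-q,q}(\x)+g_{n-q+1,q}(\x)\ (n\ge q).
\]
This is the known recurrence from \cite{Hou-FFA-2012}. To keep the argument self-contained, I would derive it from the generating function implicit in Waring's formula: $g_{n,q}$ is the $n$-th power sum of the roots of ${\tt t}^q-{\tt t}-\x$. By Newton's identities, the elementary symmetric functions of those roots are $e_{q-1}=-1$ up to sign conventions, $e_q=\pm\x$, and all others vanish.

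Newton's identities for $n\ge q$ then give exactly $p_n=\x\,p_{n-q}+p_{n-q+1}$, with the signs matching those computed in the displayed case analysis of $\sigma_k$ above. For $n<q$ the identities give $p_n=0$ for $1\le n\le q-2$ and $p_{q-1}=(q-1)e_{q-1}\cdot(\pm1)$, which is $-1$ in characteristic $p$. For the base value $g_{0,q}$, I would check directly from \eqref{AB2}: the range $0\le l\le 0$ gives the convention $\frac{0}{0}\binom{0}{0}$. This should be read via $\sum_a(\x+a)^0=q=0$ in $\mathbb{F}_p$, so it is consistent with the stated initial conditions. Over $\mathbb{Z}$ the initial values hold only modulo $p$, and I would note that the statement is meant in $\mathbb{F}_p$.

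Alternatively, one can verify the recurrence directly from \eqref{AB2} using the Pascal-type identity $\frac{n}{l}\binom{l}{n-l(q-1)}=\frac{n-q}{l-1}\binom{l-1}{n-q-(l-1)(q-1)}+\frac{n-q+1}{l-1}\binom{l-1}{n-q+1-(l-1)(q-1)}$. This route is a routine but messy binomial check, and it is the main obstacle if one avoids Newton's identities. I would therefore prefer the power-sum route.

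Finally, apply the substitution homomorphism $\x\mapsto e_1(\x_1,\ldots,\x_k)$ to each relation. This yields $g_{n,q}(\x_1,\ldots,\x_k)=e_1\,g_{n-q,q}(\x_1,\ldots,\x_k)+g_{n-q+1,q}(\x_1,\ldots,\x_k)$ together with the same constant initial values, which proves the proposition. The bivariate proposition is the case $k=2$.
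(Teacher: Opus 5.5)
Your proposal is correct and matches the paper's argument, which it omits: the paper obtains the $k$-variable recurrence by composing Hou's univariate recurrence with $e_1(\x_1,\ldots,\x_k)$, exactly your substitution-homomorphism step. Your self-contained derivation of the univariate recurrence via power sums of the roots of ${\tt t}^q-{\tt t}-\x$ is sound (the values $e_{q-1},e_q$ come from Vieta rather than Newton, and $p_0=q=0$ in $\mathbb{F}_p$). Your caveat that the initial values hold only modulo $p$, not in $\mathbb{Z}$, is correct and left implicit in the paper.
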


\subsection{Generating functions of polynomial $g_{n,q}(\x,\y)$ and $g_{n,q}(\x_1,\x_2,\ldots,\x_k)$}

\begin{prop} The generating function of the polynomial $g_{n,q}(\x,\y)$ is given by
$$\displaystyle{\sum_{n\geq 0}\,g_n(\x,\y)\,\z^n\,=\,\frac{-\z^{q-1}}{1-\z^{q-1}-\x\z^q-\y\z^q}}$$
\end{prop}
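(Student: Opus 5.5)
We prove the generating function identity by turning the bivariate recurrence of the preceding proposition into a linear equation for the formal power series
$$G(\z)=\sum_{n\geq 0} g_{n,q}(\x,\y)\,\z^n\in \mathbb{Z}[\x,\y][[\z]].$$

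First record the initial values: $g_{0,q}=\cdots=g_{q-2,q}=0$ and $g_{q-1,q}=-1$. These follow from the explicit Waring-type formula. For $n<q-1$ the range $n/q\le l\le n/(q-1)$ contains no integer $l\geq 1$. For $n=q-1$ only $l=1$ occurs, and it contributes $\frac{q-1}{1}\binom{1}{0}=q-1$. So the constant is $q-1$ over $\mathbb{Z}$, which is $-1$ in characteristic $p$. This matches the convention of the univariate case, and it matches the proposition's initial data, which we take as given. Hence the part of $G$ of degree less than $q$ is just $-\z^{q-1}$.

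Next, multiply the recurrence $g_{n,q}=e_1(\x,\y)\,g_{n-q,q}+g_{n-q+1,q}$, valid for $n\ge q$, by $\z^n$ and sum over $n\geq q$. The left side is $G-(-\z^{q-1})$. The first term on the right is $(\x+\y)\z^q\sum_{m\ge 0}g_{m,q}\z^m=(\x+\y)\z^qG$. The second is $\z^{q-1}\sum_{m\geq 1}g_{m,q}\z^m$. Since $g_{0,q}=0$ (recall $q\ge 2$), this equals $\z^{q-1}G$. Therefore
$$G+\z^{q-1}=(\x+\y)\z^qG+\z^{q-1}G,$$
which rearranges to $G\,(1-\z^{q-1}-\x\z^q-\y\z^q)=-\z^{q-1}$. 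The factor $1-\z^{q-1}-\x\z^q-\y\z^q$ has constant term $1$, so it is invertible in the power series ring. This gives the claimed formula.

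As an alternative, or as a check, note that $g_{n,q}(\x,\y)=g_{n,q}(\x)\circ e_1(\x,\y)$ by Remark~\ref{rmk 5.3}. So one may instead substitute $\x\mapsto \x+\y$ in the univariate generating function $-\z^{q-1}/(1-\z^{q-1}-\x\z^q)$ from \cite{Hou-FFA-2012}. There is no real obstacle in either route. The only point needing care is the index bookkeeping in the shifted sums, in particular that the $m=0$ term vanishes in $\z^{q-1}\sum_{m\ge1}g_{m,q}\z^m$, together with the treatment of the initial value $-1$ over $\mathbb{Z}$ versus modulo $p$.
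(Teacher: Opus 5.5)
Your proof is correct, and your ``alternative'' is exactly what the paper intends. The paper gives no separate argument: it says the bivariate results follow by composing the univariate ones with $e_1(\x,\y)$ (Remark~\ref{rmk 5.3}). Here that means substituting $\x\mapsto\x+\y$ coefficientwise in the univariate generating function $-\z^{q-1}/(1-\z^{q-1}-\x\z^q)$.

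Your main route differs. You convert the bivariate recurrence into the linear equation $G\,(1-\z^{q-1}-(\x+\y)\z^q)=-\z^{q-1}$ and invert a unit of the power series ring. The shift bookkeeping is right, including the use of $g_{0,q}=0$ to turn $\z^{q-1}\sum_{m\ge1}g_{m,q}\z^m$ into $\z^{q-1}G$.

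The two routes trade off as follows:
\begin{itemize}
\item The substitution route is a one-liner, but it rests on the external univariate formula.
\item Your route is self-contained once the bivariate recurrence is granted. However, the paper states that recurrence without proof and justifies it by the same composition with $e_1$. So the two arguments are logically on the same footing.
\end{itemize}

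One correction: put $G$ in $\mathbb{F}_p[\x,\y][[\z]]$, not $\mathbb{Z}[\x,\y][[\z]]$. Over $\mathbb{Z}$ the Waring expression gives $g_{q-1,q}=q-1$. With your convention $g_{0,q}=0$, the recurrence also already fails at $n=q$ when $q>2$: Waring gives $g_{q,q}=q(\x+\y)$, while $e_1\,g_{0,q}+g_{1,q}=0$. So the initial data, the recurrence, and the identity itself all hold only modulo $p$, which is how you actually use them.
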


\begin{prop} The generating function of the polynomial $g_{n,q}(\x_1,\x_2,\ldots,\x_k)$ is given by
$$\displaystyle{\sum_{n\geq 0}\,g_n(\x_1,\ldots,\x_k)\,\z^n\,=\,\frac{-\z^{q-1}}{1-\z^{q-1}-\x_1\z^q-\cdots -\x_k\z^q}}$$
\end{prop}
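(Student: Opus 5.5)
The statement to prove is the generating function
$$\sum_{n\geq 0} g_n(\x_1,\ldots,\x_k)\,\z^n=\frac{-\z^{q-1}}{1-\z^{q-1}-\x_1\z^q-\cdots-\x_k\z^q}.$$
The plan is to reduce it to the univariate case via the composition $g_{n,q}(\x_1,\ldots,\x_k)=g_{n,q}(\x)\circ e_1(\x_1,\ldots,\x_k)$ noted after Remark~\ref{rmk 5.3}. A direct check from the recurrence of the preceding proposition will serve as a cross-check.

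\textbf{Route 1: the recurrence.} Put $G(\z)=\sum_{n\ge0} g_{n,q}(\x_1,\ldots,\x_k)\z^n$, a formal power series over $\mathbb{Z}[\x_1,\ldots,\x_k]$, and write $e_1=\x_1+\cdots+\x_k$.
\begin{itemize}
\item Multiply $G$ by $1-\z^{q-1}-e_1\z^q$ and read off the coefficient of $\z^n$:
$$g_n-g_{n-q+1}-e_1\,g_{n-q},$$
with the convention that $g_m=0$ for $m<0$.
\item For $n\ge q$, this coefficient vanishes by the recurrence $g_n=e_1 g_{n-q}+g_{n-q+1}$.
\item For $0\le n\le q-1$, the terms $e_1 g_{n-q}$ have negative index and drop out. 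The coefficient is then $g_n-g_{n-q+1}$.
\item For $n\le q-2$, both $g_n$ and $g_{n-q+1}$ are zero (the latter has negative index). For $n=q-1$, it equals $g_{q-1}-g_0=-1-0=-1$.
\end{itemize}
Hence $(1-\z^{q-1}-e_1\z^q)\,G=-\z^{q-1}$. Since the constant term $1$ makes the left factor invertible in $\mathbb{Z}[\x_1,\ldots,\x_k][[\z]]$, we can divide, and expanding $e_1\z^q=\x_1\z^q+\cdots+\x_k\z^q$ gives the claimed formula.

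\textbf{Route 2: substitution.} Take the univariate generating function $\sum_n g_{n,q}(\x)\z^n=-\z^{q-1}/(1-\z^{q-1}-\x\z^q)$ from \cite{Hou-FFA-2012}. Substitute $\x\mapsto e_1(\x_1,\ldots,\x_k)$; this is a ring homomorphism applied coefficientwise to power series in $\z$. By the composition identity, the left side becomes $G$ and the right side becomes the stated rational function.

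\textbf{Main obstacle.} The only point needing care is that the recurrence in the preceding proposition is stated only for $n\ge q$, together with explicit initial values. One must therefore confirm that the boundary coefficients $0\le n\le q-1$ produce exactly $-\z^{q-1}$ and nothing else, which is the small check carried out above. Since $\x_1,\ldots,\x_k$ enter only through $e_1$, no other obstacle arises.
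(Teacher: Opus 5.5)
Your proposal is correct, and your main route (Route 2: substituting $\x\mapsto e_1(\x_1,\ldots,\x_k)$ into Hou's univariate generating function via the composition identity) is exactly how the paper justifies this result; Route 1 is a valid self-contained check from the stated recurrence. The only slip is taking the coefficient ring to be $\mathbb{Z}[\x_1,\ldots,\x_k]$: the initial values $g_{0,q}=0$ and $g_{q-1,q}=-1$ hold only after reducing mod $p$ (over $\mathbb{Z}$, Waring's formula gives $g_{q-1,q}=q-1$), so the identity should be read in $\mathbb{F}_p[\x_1,\ldots,\x_k][[\z]]$.
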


The following two results, in which the base $q$ weight of $n$ is $q$, are the bivariate and multivariate cases of \cite[Lemma 3.3]{Hou-FFA-2012}. 

\begin{lem}\label{L1}
\noindent Let $ n = \alpha_{0} q^{0} + \cdots + \alpha_{t} q^{t}\hspace{0.2cm},\hspace{0.2cm} 0 \leq \alpha_{i} \leq q - 1 $ and $w_{q}(n)$ be the base $q$ weight of $n$,
\begin{equation}\label{RR20}
g_{n,q}(\x,\y)=
\begin{cases}
0&\text{if}\ w_{q}(n) < q - 1,\cr
-1&\text{if}\ w_{q}(n)  = q - 1,\cr
L(\x)+L(\y) + \delta &\text{if}\ w_{q}(n) = q,
\end{cases}
\end{equation}

\noindent where
$$L(\z)=\alpha_{0}\z^{q^{0}} + ( \alpha_{0} + \alpha_{1} ) \z^{q^{1}} + \cdots  + ( \alpha_{0} + \cdots  + \alpha_{t-1}) \z^{q^{t-1}},$$

\noindent and

\noindent 
\begin{center}
\begin{displaymath}
   \delta = \left\{
     \begin{array}{lr}
       1 &  \text{if} \hspace{0.2cm} q = 2,\cr
       0 &  \text{if} \hspace{0.2cm} q > 2.
     \end{array}
   \right.
\end{displaymath}
\end{center}
\end{lem}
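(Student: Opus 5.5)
The plan is to reduce Lemma~\ref{L1} to its univariate counterpart, \cite[Lemma~3.3]{Hou-FFA-2012}, using the composition structure from Remark~\ref{rmk 5.3}. That univariate result states that in $\mathbb{F}_p[\x]$ one has $g_{n,q}(\x)=0$ if $w_q(n)<q-1$, $g_{n,q}(\x)=-1$ if $w_q(n)=q-1$, and $g_{n,q}(\x)=L(\x)+\delta$ if $w_q(n)=q$, with $L$ and $\delta$ exactly as in the statement. I will regard the identity~\eqref{RR20} as an identity in $\mathbb{F}_p[\x,\y]$. This is the setting in which $g_{n,q}$ is studied, and over $\mathbb{Z}$ the additivity used below fails.

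First I will check that reducing the coefficients modulo $p$ commutes with substitution. By Remark~\ref{rmk 5.3}, $g_{n,q}(\x,\y)=g_{n,q}(\x)\circ e_1(\x,\y)=g_{n,q}(\x+\y)$ holds already in $\mathbb{Z}[\x,\y]$. Reducing modulo $p$ on both sides, the image of $g_{n,q}(\x,\y)$ in $\mathbb{F}_p[\x,\y]$ is obtained by substituting $\x\mapsto\x+\y$ into the image of $g_{n,q}(\x)$ in $\mathbb{F}_p[\x]$. So it is enough to plug $\x+\y$ into each of the three cases of the univariate lemma.

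In the first two cases the univariate polynomial is a constant ($0$ or $-1$), so the substitution changes nothing. In the third case I will use that $L$ is a $q$-linearized polynomial with integer (hence $\mathbb{F}_p$-) coefficients. In characteristic $p$ we have $(\x+\y)^{q^i}=\x^{q^i}+\y^{q^i}$, which gives $L(\x+\y)=L(\x)+L(\y)$. Hence $g_{n,q}(\x,\y)=L(\x)+L(\y)+\delta$, with the constant $\delta$ appearing only once, as claimed.

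The only real obstacle is the bookkeeping in the first step. One has to make sure the univariate lemma is applied to the reduced polynomial in $\mathbb{F}_p[\x]$ and that the additivity of $L$ is used only after passing to characteristic $p$. The displayed formula for the case $w_q(n)=q$ is false over $\mathbb{Z}$ whenever $L$ has a term of degree $q^i>1$; that is, for $q>2$, or for $q=2$ and $t\geq 2$. Once this is made explicit, the rest is a direct substitution. The same argument, with $e_1(\x_1,\ldots,\x_k)$ in place of $e_1(\x,\y)$, gives the $k$-variable analogue $\sum_i L(\x_i)+\delta$.
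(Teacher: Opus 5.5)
Your proposal is correct and follows the same route as the paper. The paper obtains Lemma~\ref{L1} by composing the univariate \cite[Lemma~3.3]{Hou-FFA-2012} with $e_1(\x,\y)$ via Remark~\ref{rmk 5.3}; your reduction to $\mathbb{F}_p[\x,\y]$ and your use of $(\x+\y)^{q^i}=\x^{q^i}+\y^{q^i}$ spell out the additivity $L(\x+\y)=L(\x)+L(\y)$ that the paper leaves implicit.

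One small slip in your closing aside: $L$ has a term of degree $>1$ exactly when $t\geq 2$, not for every $q>2$ (for instance $n=2q-1$ gives $L(\z)=(q-1)\z$). This plays no role in the argument.
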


\begin{lem}\label{L6.3}
\begin{equation}
g_{n,q}(\x_1,\x_2,\ldots,\x_k)=
\begin{cases}
0&\text{if}\ w_{q}(n) < q - 1,\cr
-1&\text{if}\ w_{q}(n)  = q - 1,\cr
L_1(\x_1)+L_2(\x_2)+\cdots +L_k(\x_k) + \delta &\text{if}\ w_{q}(n) = q,
\end{cases}
\end{equation}

\noindent where
$$L_i(\x_i)=\alpha_{0}\x_i^{q^{0}} + ( \alpha_{0} + \alpha_{1} ) \x_i^{q^{1}} + \cdots  + ( \alpha_{0} + \cdots  + \alpha_{t-1}) \x_i^{q^{t-1}},\,\,\,\,\text{for}\,\,\, 1\leq i\leq k,$$

\noindent and

\begin{center}
\begin{displaymath}
   \delta = \left\{
     \begin{array}{lr}
       1 &  \text{if} \hspace{0.2cm} q = 2,\cr
       0 &  \text{if} \hspace{0.2cm} q > 2.
     \end{array}
   \right.
\end{displaymath}
\end{center}

\end{lem}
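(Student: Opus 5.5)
The plan is to reduce Lemma~\ref{L6.3} to its univariate counterpart, \cite[Lemma 3.3]{Hou-FFA-2012}, by means of the composition identity
\[
g_{n,q}(\x_1,\ldots,\x_k)=g_{n,q}(\x)\circ e_1(\x_1,\ldots,\x_k),
\]
which is the $k$-variable analogue of Remark~\ref{rmk 5.3}. It follows directly from the explicit formula for $g_{n,q}(\x_1,\ldots,\x_k)$ given above, since that formula is just (\ref{AB2}) with $\x$ replaced by $e_1$. The univariate lemma states that $g_{n,q}(\x)$ equals $0$ when $w_q(n)<q-1$, equals $-1$ when $w_q(n)=q-1$, and equals $L(\x)+\delta$ when $w_q(n)=q$. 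Here $L(\z)=\sum_{j=0}^{t-1}(\alpha_0+\cdots+\alpha_j)\z^{q^j}$ and $\delta$ is as in the statement. I would first check that this is exactly how \cite[Lemma 3.3]{Hou-FFA-2012} is formulated; in particular, the constant term $\delta$ for $q=2$ should match the convention used there.

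Since the first two cases give constants, substituting $e_1(\x_1,\ldots,\x_k)$ for $\x$ leaves them unchanged, so they are immediate. In the third case I substitute to get $L(\x_1+\cdots+\x_k)+\delta$. The key observation is that $L$ is a $q$-linearized polynomial. Its coefficients $\alpha_0+\cdots+\alpha_j$ are integers, and they lie in the prime field after reduction. Moreover, $(\x_1+\cdots+\x_k)^{q^j}=\x_1^{q^j}+\cdots+\x_k^{q^j}$ in characteristic $p$. Hence
\[
L(\x_1+\cdots+\x_k)=L(\x_1)+\cdots+L(\x_k),
\]
and each $L(\x_i)$ is exactly $L_i(\x_i)$ as defined in the statement. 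This gives the claimed formula.

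The main obstacle is a subtlety about where the identity lives. The polynomial $g_{n,q}(\x_1,\ldots,\x_k)$ was defined in $\mathbb{Z}[\x_1,\ldots,\x_k]$, but the additivity of $\z\mapsto\z^{q^j}$ holds only modulo $p$. So the lemma must be read in $\mathbb{F}_p[\x_1,\ldots,\x_k]$, which is also the setting of the univariate lemma, since $g_{n,q}\in\mathbb{F}_p[\x]$ there. I would state this explicitly and then carry out the reduction modulo $p$ before applying the Frobenius additivity. After that step, the argument is purely formal.
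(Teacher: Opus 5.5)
Your proposal is correct and follows the paper's route. The paper omits the proof, saying only that the result follows by composing Hou's univariate Lemma 3.3 with $e_1(\x_1,\ldots,\x_k)$; your two additions supply exactly the details it leaves implicit: the additivity of the $q$-linearized polynomial $L$ (coefficients in $\mathbb{F}_p$), which splits $L(\x_1+\cdots+\x_k)$ into $L(\x_1)+\cdots+L(\x_k)$, and the remark that the identity holds in $\mathbb{F}_p[\x_1,\ldots,\x_k]$ rather than in $\mathbb{Z}[\x_1,\ldots,\x_k]$.
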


The following two lemmas are due to \cite[Eq. (4.1)]{Hou-FFA-2012} and Remark~\ref{rmk 5.3}.

\begin{lem}\label{L6.4} Let $l,i\geq 0$. Then we have
$$g_{l+q^i}(\x,\y)=g_{l+1}(\x,\y)+S_i(e_1(\x,\y))\cdot g_l(\x,\y).$$
\end{lem}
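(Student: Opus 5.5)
The identity comes from substituting $\x\mapsto e_1(\x,\y)$ into the univariate recurrence of Proposition~\ref{lem 2.2}. By Remark~\ref{rmk 5.3}, for every integer $m\geq 0$ we have $g_{m,q}(\x,\y)=g_{m,q}(\x)\circ e_1(\x,\y)$, where the left side is defined by the explicit Waring-type sum. Proposition~\ref{lem 2.2} gives the polynomial identity
$$g_{l+q^i,q}(\x)=g_{l+1,q}(\x)+S_i(\x)\,g_{l,q}(\x)$$
in $\mathbb{F}_p[\x]$. It holds for all integers $l,i\geq 0$, including $i=0$, where $S_0=0$ and the identity reads $g_{l+1}=g_{l+1}$.

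Since both sides are polynomials in one indeterminate, we may substitute $\x\mapsto e_1(\x,\y)=\x+\y$. Substitution is a ring homomorphism $\mathbb{F}_p[\x]\to\mathbb{F}_p[\x,\y]$, so it respects sums and products. The result is
$$g_{l+q^i,q}(\x)\circ e_1=g_{l+1,q}(\x)\circ e_1+\bigl(S_i\circ e_1\bigr)\cdot\bigl(g_{l,q}(\x)\circ e_1\bigr).$$
By Remark~\ref{rmk 5.3}, each composite $g_{m,q}(\x)\circ e_1$ equals $g_{m,q}(\x,\y)$. The factor $S_i\circ e_1$ is precisely what the statement denotes $S_i(e_1(\x,\y))$, and this gives the lemma.

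The only point needing care is the ring in which the identities live. Proposition~\ref{lem 2.2} is an identity over $\mathbb{F}_p$, since it is derived from $\sum_a(\x+a)^n=g_{n,q}(\x^q-\x)$. The bivariate polynomials were defined in $\mathbb{Z}[\x,\y]$, so I will read the lemma in $\mathbb{F}_p[\x,\y]$, i.e.\ after reducing coefficients mod $p$, which is the ring the paper works in. The composition statement in Remark~\ref{rmk 5.3} holds coefficientwise over $\mathbb{Z}$, so it survives reduction mod $p$.

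If one wanted an argument independent of the univariate case, the alternative would be to use $\sum_{a}(\x+\y+a)^{n}=g_{n,q}(\x^q-\x,\y^q-\y)$. One would expand $(\x+\y+a)^{l+q^i}=(\x+\y+a)^l\bigl((\x+\y)^{q^i}+a\bigr)$ and write $(\x+\y)^{q^i}=(\x+\y)+S_i\bigl((\x+\y)^q-(\x+\y)\bigr)$. The obstacle on that route is that $\x^q-\x$ and $\y^q-\y$ are not algebraically independent of $\x,\y$ in the needed sense, so injectivity of the substitution must be argued. The composition route avoids this, so it is the one I will use.
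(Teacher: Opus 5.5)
Your proof is correct and is exactly the paper's argument: the paper obtains this lemma by composing the univariate identity of Proposition~\ref{lem 2.2} (Hou's Eq.~(4.1)) with $e_1(\x,\y)$ via Remark~\ref{rmk 5.3}. Your point that the identity must be read in $\mathbb{F}_p[\x,\y]$ rather than $\mathbb{Z}[\x,\y]$ is well taken. Your closing aside overstates the obstacle on the alternative route: everything there depends only on $\x+\y$, and $(\x+\y)^q-(\x+\y)$ is transcendental over $\mathbb{F}_p$, so that substitution is injective too. This does not affect your main argument.
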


\begin{lem} A generalization of Lemma~\ref{L6.4} is given by 
$$g_{l+q^i}(\x_1,\ldots, \x_k)=g_{l+1}(\x_1,\ldots, \x_k)+S_i(e_1(\x_1,\ldots, \x_k))\cdot g_l(\x_1,\ldots, \x_k).$$   
\end{lem}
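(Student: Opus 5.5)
The identity will follow from the univariate recurrence of Proposition~\ref{lem 2.2} and the composition description of Remark~\ref{rmk 5.3}, extended in the obvious way to $k$ variables. By the explicit formula for $g_{n,q}(\x_1,\ldots,\x_k)$ displayed above, for every $n\ge 0$ we have, in $\mathbb{Z}[\x_1,\ldots,\x_k]$,
$$g_{n,q}(\x_1,\ldots,\x_k)=g_{n,q}(\x)\circ e_1(\x_1,\ldots,\x_k).$$
This holds because the $k$-variable expression is obtained from the univariate Waring formula (\ref{AB2}) by substituting $e_1(\x_1,\ldots,\x_k)$ for $\x$ term by term.

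Take the identity $g_{l+q^i,q}(\x)=g_{l+1,q}(\x)+S_i(\x)\,g_{l,q}(\x)$ of Proposition~\ref{lem 2.2}, which is a polynomial identity in $\mathbb{F}_p[\x]$. Substitution $\x\mapsto e_1(\x_1,\ldots,\x_k)$ is a ring homomorphism $\mathbb{F}_p[\x]\to\mathbb{F}_p[\x_1,\ldots,\x_k]$, so it preserves sums and products. Applying it to both sides gives
$$g_{l+q^i}(\x_1,\ldots,\x_k)=g_{l+1}(\x_1,\ldots,\x_k)+S_i\bigl(e_1(\x_1,\ldots,\x_k)\bigr)\,g_{l}(\x_1,\ldots,\x_k),$$
where each $g_{m}(\x_1,\ldots,\x_k)$ has been identified with $g_{m,q}\circ e_1$ using the first step. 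For $k=2$ this is exactly Lemma~\ref{L6.4}, so the statement is that lemma with $e_1(\x,\y)$ replaced by $e_1(\x_1,\ldots,\x_k)$. The case $i=0$ is consistent, since $S_0=0$ and both sides reduce to $g_{l+1}$.

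The one point that needs care is the ring in which the identity lives. Proposition~\ref{lem 2.2} holds over $\mathbb{F}_p$, since it is derived from (\ref{AB1}) in characteristic $p$, while $g_{n,q}(\x_1,\ldots,\x_k)$ was defined with integer coefficients. So the statement should be read after reducing modulo $p$, which is also where $S_i$ and the relation $\sum_{a\in\mathbb{F}_q}(\x_1+\cdots+\x_k+a)^n=g_{n,q}(\x_1^q-\x_1,\ldots,\x_k^q-\x_k)$ naturally live. Reduction mod $p$ commutes with substitution of $e_1$, so nothing else is needed.

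An alternative, if one wants to avoid relying on Remark~\ref{rmk 5.3}, is to work in $\mathbb{F}_q[\x_1,\ldots,\x_k]$ directly. Expand $\sum_a(\x_1+\cdots+\x_k+a)^{l+q^i}=\sum_a(\cdot)^l\bigl((\x_1+\cdots+\x_k)^{q^i}+a\bigr)$. Then write $(\x_1+\cdots+\x_k)^{q^i}+a=\bigl(\x_1+\cdots+\x_k+a\bigr)+S_i\bigl(\sum_j(\x_j^q-\x_j)\bigr)$, using $u^{q^i}-u=S_i(u^q-u)$ for $u=\x_1+\cdots+\x_k$. This yields the identity evaluated at $\x_j^q-\x_j$. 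Because these elements are algebraically independent, the identity lifts to $g$ itself.
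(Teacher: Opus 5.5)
Your proof is correct and is essentially the paper's own argument: the paper obtains this lemma by composing the univariate identity of Proposition~\ref{lem 2.2} (Hou's Eq.~(4.1)) with $e_1(\x_1,\ldots,\x_k)$ via Remark~\ref{rmk 5.3}, which is exactly your substitution-homomorphism step. Your point that the identity must be read in $\mathbb{F}_p[\x_1,\ldots,\x_k]$, rather than for the integer-coefficient polynomials, is a correct clarification that the paper leaves implicit.
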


\section{Permutation polynomials in several variables and local permutation polynomials}

Let $\mathbb{F}_q^k$ be the $k$-fold cartesian product of $\mathbb{F}_q$, where $k\geq 1$ is an integer. Let $\overline{\x}=(\x_1,\ldots, \x_k)$, $\overline{\x}_i=(\x_1,\ldots, \x_{i-1},\x_{i+1},\ldots, \x_k)$, and $\mathbb{F}_q[\overline{\x}]$ denote the ring of polynomials in $k$ variables over $\mathbb{F}_q$.  \vskip 0.1in 

We first present some results that will be used throughout the section. 

\begin{thm}(\cite{Lidl-Niederreiter-1972})\label{T2.2}
Suppose $f\in \mathbb{F}_q[\overline{\x}]$ is of the form 
$$f(\bar{\x})=g(\x_1,\ldots, \x_m)+h(\x_{m+1},\ldots, \x_k),\,\,\,\,1\leq m\leq k.$$

\noindent If at least one of $g$ and $h$ is a permutation polynomial over $\mathbb{F}_q$ then $f$ is a permutation polynomial over $\mathbb{F}_q$. If $q$ is prime, then the converse holds.
\end{thm}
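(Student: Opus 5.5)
Both claims reduce to counting solutions of $f=\alpha$ by conditioning on the value of $g$ (for the forward direction) or on the value of $h$ (for the prime converse).

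\textbf{Forward direction.} Suppose $g$ is a permutation polynomial in the $m$ variables $\x_1,\ldots,\x_m$, so each equation $g=\beta$ has exactly $q^{m-1}$ solutions in $\mathbb{F}_q^m$. Fix $\alpha\in\mathbb{F}_q$. For each of the $q^{k-m}$ choices of $(b_{m+1},\ldots,b_k)\in\mathbb{F}_q^{k-m}$, the equation $f=\alpha$ becomes $g(\x_1,\ldots,\x_m)=\alpha-h(b_{m+1},\ldots,b_k)$. This has exactly $q^{m-1}$ solutions, whatever the right-hand side is. Summing over these choices gives $q^{k-m}\cdot q^{m-1}=q^{k-1}$ solutions, so $f$ is a PP. The case where $h$ is the PP is symmetric.

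\textbf{Converse for $q=p$ prime.} I would use a character-sum (Fourier) criterion. Let $\psi(x)=\zeta_p^{x}$ with $\zeta_p$ a primitive $p$-th root of unity. A map $F:\mathbb{F}_p^{s}\to\mathbb{F}_p$ is equidistributed exactly when $\sum_{\bar x}\psi(cF(\bar x))=0$ for all $c\neq 0$. The sum for $f$ factors as
$$\Big(\sum\psi(cg)\Big)\Big(\sum\psi(ch)\Big).$$
So if $f$ is a PP, then for each $c\neq0$ one of the two factors vanishes. The obstacle is to pass from ``for each $c$, one factor vanishes'' to ``one of $g,h$ vanishes for all $c$''.

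Here primality is used. For any $F$, let $N_\beta$ be the number of solutions of $F=\beta$. Then
$$\sum\psi(cF)=\sum_\beta N_\beta\zeta_p^{c\beta}$$
is an element of $\mathbb{Z}[\zeta_p]$. The automorphism $\zeta_p\mapsto\zeta_p^{c}$ of $\mathbb{Q}(\zeta_p)$ sends the sum at $c=1$ to the sum at $c$. Hence if the sum vanishes for one $c\neq0$, it vanishes for all $c\neq0$, since every nonzero $c$ is a unit mod $p$ and the Galois group acts transitively. Therefore, if the factor for $g$ vanishes at $c=1$, then $g$ is a PP; otherwise the factor for $h$ vanishes at $c=1$, hence at every $c\neq 0$, and $h$ is a PP.

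So the only nontrivial step is this Galois-conjugation argument. It fails for non-prime $q$: there the additive characters are indexed by $c\in\mathbb{F}_q$, and the characters with $c\neq 0$ are not all Galois conjugate over $\mathbb{Q}$.
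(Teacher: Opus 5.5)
The paper gives no proof of this statement; it is quoted from Lidl--Niederreiter, so there is no in-paper argument to compare yours against. Your proof is correct and is the standard one. Sufficiency follows by counting over each fibre. For $q=p$ prime, you use the additive-character criterion, the factorization $\sum\psi(cf)=\bigl(\sum\psi(cg)\bigr)\bigl(\sum\psi(ch)\bigr)$, and the fact that $\sum_\beta N_\beta\zeta_p^\beta=0$ with $N_\beta\in\mathbb{Z}$ forces all $N_\beta$ to be equal. Your Galois-conjugation step is equivalent to this last fact, via the irreducibility of $1+t+\cdots+t^{p-1}$ over $\mathbb{Q}$.

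Your closing remark about non-prime $q$ is also right. Over $\mathbb{F}_4=\{0,1,\omega,\omega^2\}$, take $g=\x^2+\x$, whose image is $\{0,1\}$, and $h=\omega(\y^2+\y)$, whose image is $\{0,\omega\}$. Then $g+h$ is a permutation polynomial in two variables, but neither $g$ nor $h$ is one.
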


\begin{thm}(\cite{GJ})\label{GJ}\label{thm 6.4}
Let $f\in \mathbb{F}_q[\x_1,\ldots, \x_k]$ be of the form 
$$f(\x_1,\ldots,\x_k)=g(\x_1,\ldots,\x_m)+h(\x_{m+1},\ldots,\x_k),\,\,\,\,1\leq m<k.$$
Then $f$ is an LPP if and only if both $g$ and $h$ are local permutation polynomials. 
\end{thm}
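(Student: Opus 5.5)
The statement to prove is Theorem~\ref{GJ}: a separated sum $f=g(\x_1,\ldots,\x_m)+h(\x_{m+1},\ldots,\x_k)$ is an LPP if and only if both $g$ and $h$ are LPPs. The plan is to reduce everything to the definition of a local permutation polynomial, working one coordinate at a time. The key observation is that when we freeze all variables except $\x_i$, one summand of $f$ becomes a constant. Adding a constant to a univariate polynomial does not change whether it is a permutation of $\mathbb{F}_q$, since it only composes the map with a translation.

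For the direction ``$g,h$ LPP $\Rightarrow f$ LPP'', fix an index $i$ and suppose first that $1\le i\le m$. Take any $\overline{a}_i\in\mathbb{F}_q^{k-1}$. Then
\[
f(a_1,\ldots,\x_i,\ldots,a_k)=g(a_1,\ldots,\x_i,\ldots,a_m)+c,\qquad c=h(a_{m+1},\ldots,a_k)\in\mathbb{F}_q .
\]
Since $g$ is an LPP, the first term is a PP in $\x_i$. Adding the constant $c$ keeps it a PP, so this slice of $f$ permutes $\mathbb{F}_q$. The case $m+1\le i\le k$ is symmetric, with the roles of $g$ and $h$ exchanged.

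For the converse, assume $f$ is an LPP and fix $1\le i\le m$ together with arbitrary values $a_j$ for $j\le m$, $j\ne i$. Extend these to a full point $\overline{a}_i$ by choosing any $a_{m+1},\ldots,a_k$; such a choice exists because $m<k$. The displayed identity above now gives
\[
g(a_1,\ldots,\x_i,\ldots,a_m)=f(a_1,\ldots,\x_i,\ldots,a_k)-c,
\]
which is a PP minus a constant, and hence a PP. This holds for every $i\le m$ and every choice of the other $a_j$, so $g$ is an LPP in its $m$ variables. The argument for $h$ is identical.

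There is no real obstacle in this argument. The only points needing care are the bookkeeping of which variables are frozen and the remark that the unused block of variables can always be specialized, which is exactly where the hypothesis $1\le m<k$ is used. One should also note that for $m=1$ a one-variable LPP simply means a univariate PP, and the argument covers this case without change.
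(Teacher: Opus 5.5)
Your proof is correct. Freezing every variable except $\x_i$ turns the other summand into a constant, and adding a constant preserves the permutation property, so both directions follow directly from the definition of an LPP. The paper gives no proof of its own here (the result is quoted from the literature), and your direct verification is the standard argument.

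One small correction: specializing the other block of variables is always possible, so that step does not depend on $m<k$. The hypothesis $1\le m<k$ is used only to make sure that both $g$ and $h$ involve at least one variable.
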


\begin{thm}(\cite{GJ})
If $f$ is an LPP, then $f$ is linear if $q=2$ and $q=3$, and has degree at most $n(q-2)$ otherwise.     
\end{thm}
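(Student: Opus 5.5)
We will work with the reduced representative of $f$, i.e.\ assume $\deg_{\x_i}(f)\le q-1$ for every $i$; this loses nothing, since the LPP property depends only on the induced map $\mathbb{F}_q^k\to\mathbb{F}_q$, and the degree bound is understood for this representative. Here $k$ denotes the number of variables, which the statement writes as $n$. Write
$$f=\sum_{j=0}^{q-1} c_{i,j}(\overline{\x}_i)\,\x_i^{\,j}$$
for each $i$, where the coefficients $c_{i,j}$ are reduced polynomials in the remaining $k-1$ variables.

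The first step is to fix $i$ and $\overline{a}_i\in\mathbb{F}_q^{k-1}$. By the LPP hypothesis, $f(a_1,\dots,\x_i,\dots,a_k)$ is a permutation polynomial of $\mathbb{F}_q$ of degree $<q$. By Hermite's criterion, for $q>2$ its $(q-1)$-st power has reduced degree $q-1$ with nonzero leading coefficient. Equivalently, the standard consequence is that a reduced PP of $\mathbb{F}_q$ with $q>2$ has degree at most $q-2$: the coefficient of $\x^{q-1}$ equals $-\sum_{x}f(x)=-\sum_{x}x=0$. Hence $c_{i,q-1}(\overline{a}_i)=0$ for all $\overline{a}_i$. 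Since $c_{i,q-1}$ is reduced and vanishes on all of $\mathbb{F}_q^{k-1}$, it is the zero polynomial. Therefore $\deg_{\x_i}f\le q-2$ for each $i$, and summing over the $k$ variables gives $\deg f\le k(q-2)$. This settles $q\ge4$.

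For $q=2$ and $q=3$, the same argument gives $\deg_{\x_i}f\le q-2$, which is $0$ when $q=2$. This is absurd for a PP in $\x_i$, so the case $q=2$ needs the direct treatment below, and the case $q=3$ needs extra work beyond multilinearity.

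For $q=2$, reducedness gives $f=c_{i,1}(\overline{\x}_i)\x_i+c_{i,0}(\overline{\x}_i)$. Being a PP in $\x_i$ forces $c_{i,1}(\overline{a}_i)=1$ everywhere, so $c_{i,1}\equiv1$ as a reduced polynomial. Thus $\x_i$ occurs in no monomial other than $\x_i$ itself. Doing this for every $i$ yields $f=\x_1+\cdots+\x_k+c$, which is linear.

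For $q=3$, the bound $\deg_{\x_i}f\le1$ shows that $f$ is multilinear, and $f=c_{i,1}\x_i+c_{i,0}$ where $c_{i,1}$ is multilinear in $\overline{\x}_i$ and nowhere zero on $\mathbb{F}_3^{k-1}$. We must show that $c_{i,1}$ is constant. Suppose $c_{i,1}$ involves some $\x_j$, and write $c_{i,1}=u\,\x_j+v$ with $u,v$ free of $\x_j$ and $u\ne0$ as a reduced polynomial. Choose a point of the other variables where $u\neq0$; setting $\x_j=-v/u$ there then makes $c_{i,1}$ vanish, which is a contradiction. Hence each $c_{i,1}$ is a nonzero constant, so no mixed monomial contains any $\x_i$, and $f$ is affine linear.

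The only delicate step is this last $q=3$ argument, namely ruling out a nowhere-vanishing nonconstant multilinear coefficient. The rest follows directly from the Hermite-type degree bound for univariate PPs together with the uniqueness of reduced representatives.
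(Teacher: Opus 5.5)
The paper gives no proof of this statement; it quotes it from \cite{GJ}, so there is no argument in the paper to compare yours with. Your proof is correct and self-contained.

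For $q>2$, two facts together give $\deg_{x_i} f\le q-2$ for every $i$, and hence $\deg f\le k(q-2)$:
\begin{itemize}
\item the $x^{q-1}$-coefficient of a reduced univariate polynomial $g$ equals $-\sum_{x\in\mathbb{F}_q} g(x)$, which is $0$ when $g$ is a PP;
\item a reduced polynomial in the remaining $k-1$ variables is determined by its values.
\end{itemize}
Your separate treatment of $q=2$, which forces $c_{i,1}\equiv 1$, correctly gives affine linearity. So does your treatment of $q=3$: a nonconstant multilinear coefficient $u x_j+v$ with $u\neq 0$ must vanish somewhere.

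Your opening reduction is necessary, not cosmetic. For example, $x_1^{2q-1}+x_2$ induces the same map as $x_1+x_2$, so it is an LPP of degree $2q-1$. The statement therefore only holds for the reduced representative, with $n$ read as the number of variables.

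There are two small expository slips.
\begin{itemize}
\item The sentence saying that ``the same argument gives $\deg_{x_i}f\le q-2$'' when $q=2$ is wrong. The argument simply does not apply there: $\sum_{x\in\mathbb{F}_2}x=1\neq 0$, so the coefficient of $x$ is $1$, exactly as your direct $q=2$ treatment finds, and there is no absurdity to resolve.
\item The remark about the $(q-1)$-st power in Hermite's criterion is neither ``equivalent'' to nor needed for the bound you use. The relevant instance of Hermite's criterion is $t=1$, which is just the coefficient identity you then prove directly.
\end{itemize}
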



\begin{lem} (\cite[Theorem~3]{GJ}\label{lem 7.6})
 Let $h,\,h_1,\,h_2,\,\dots,\,h_k\in\mathbb{F}_q[\x]$ be permutation polynomials over $\mathbb{F}_q.$ Then for any non constant polynomial $g\in\mathbb{F}_q[\bar{\x}],$
\begin{enumerate}
\item
$g$ is a (local) permutation polynomial if and only if $h(g(\bar{\x}))$ is a (local) permutation polynomial.
\item
$g$ is a (local) permutation polynomial if and only if $g(h_1(\x_1),\,h_2(\x_2),\,\dots,\,h_k(\x_k))$ is a (local) permutation polynomial.
\end{enumerate}
\end{lem}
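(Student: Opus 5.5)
The plan is to argue directly from the definitions, by counting solutions in (1) and by restricting to lines in (2), using only that $h,h_1,\dots,h_k$ are bijections of $\mathbb{F}_q$.

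\emph{Part (1).} Since $h$ permutes $\mathbb{F}_q$, for each $\alpha\in\mathbb{F}_q$ the solution set of $h(g(\bar{\x}))=\alpha$ in $\mathbb{F}_q^k$ is exactly the solution set of $g(\bar{\x})=h^{-1}(\alpha)$.
\begin{itemize}
\item If $g$ is a permutation polynomial, each such set has $q^{k-1}$ elements, so $h\circ g$ is a permutation polynomial.
\item Conversely, if $h\circ g$ is a permutation polynomial, apply the same observation to $\beta=h(\alpha)$ as $\alpha$ ranges over $\mathbb{F}_q$. Then $g=\alpha$ has $q^{k-1}$ solutions for every $\alpha$.
\end{itemize}
For the local version, fix $i$ and $\bar a_i\in\mathbb{F}_q^{k-1}$. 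The univariate map $x\mapsto h(g(a_1,\dots,x,\dots,a_k))$ is the composition of $h$ with $x\mapsto g(a_1,\dots,x,\dots,a_k)$. A composition with a bijection is a bijection if and only if the other factor is, so the equivalence holds for every line, hence for the LPP property. (The hypothesis that $g$ is nonconstant plays no real role beyond excluding trivialities.)

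\emph{Part (2).} Let $H:\mathbb{F}_q^k\to\mathbb{F}_q^k$ be $H(x_1,\dots,x_k)=(h_1(x_1),\dots,h_k(x_k))$; it is a bijection. Hence for each $\alpha$, $H$ maps the solution set of $g\circ H=\alpha$ bijectively onto the solution set of $g=\alpha$. The two counts agree, which gives the equivalence for permutation polynomials in both directions.

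For the local property, fix $i$ and $\bar a_i$. The restriction of $g\circ H$ to the $i$-th line through $\bar a_i$ is
$$x\mapsto g\bigl(h_1(a_1),\dots,h_i(x),\dots,h_k(a_k)\bigr).$$
This is the restriction of $g$ to the $i$-th line through $\bar b_i=(h_j(a_j))_{j\ne i}$, precomposed with the bijection $h_i$. As $\bar a_i$ ranges over $\mathbb{F}_q^{k-1}$, $\bar b_i$ ranges over all of $\mathbb{F}_q^{k-1}$, because the $h_j$ are bijections. So every line restriction of $g\circ H$ is a permutation if and only if every line restriction of $g$ is one.

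The only point requiring care, and the closest thing to an obstacle, is this last surjectivity of $\bar a_i\mapsto\bar b_i$. It is needed in the converse direction so that every line of $g$ is actually reached. It follows immediately from bijectivity of the $h_j$, so the proof is essentially routine.
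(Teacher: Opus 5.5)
Your proof is correct and complete. In (1), the fibres of $h\circ g$ are the fibres of $g$ relabelled by $h^{-1}$, and each line restriction of $h\circ g$ is $h$ composed with the corresponding line restriction of $g$. In (2), $H=(h_1,\dots,h_k)$ is a bijection of $\mathbb{F}_q^k$ that maps fibres of $g\circ H$ onto fibres of $g$ and permutes the lines in each coordinate direction. You were right to flag that the converse of the local statement needs $\bar a_i\mapsto\bar b_i$ to be onto $\mathbb{F}_q^{k-1}$.

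The paper gives no argument of its own here; it simply imports the statement as \cite[Theorem~3]{GJ}, so there is no competing route to compare. Your direct verification from the two definitions is the natural self-contained proof. It also shows, as you observe, that the hypothesis that $g$ is nonconstant is not needed. It covers exactly the uses made of the lemma later: $h=g_{n,q}$ in Proposition~\ref{prop 7.7}, and $h_i(x)=x^{\ell}$ with $\gcd(\ell,Q-1)=1$ in Theorem~\ref{thm 7.11}.
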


For the rest of the section, we assume that $Q=q^e$ unless otherwise specified. In the following result, we show the equivalence between the local permutation property of $g_{n,q}(\x, \y)$ and the permutation property of $g_{n,q}(\x)$. 

\begin{prop}\label{prop 7.7}
The polynomial $g_{n,q}(\x,\,\y)$ is an LPP over $\mathbb{F}_{q^e}$ if and only if $g_{n,q}(\x)$ is a PP over $\mathbb{F}_{Q}.$ 
\end{prop}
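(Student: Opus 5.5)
The plan is to exploit Remark~\ref{rmk 5.3}, which says $g_{n,q}(\x,\y)=g_{n,q}(\x)\circ e_1(\x,\y)=g_{n,q}(\x+\y)$. The statement then follows directly from the definition of an LPP, because fixing one variable only translates the argument of $g_{n,q}$.

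For the direction ``$\Leftarrow$'', assume $g_{n,q}(\x)$ is a PP of $\mathbb{F}_Q$. Fix $b\in\mathbb{F}_Q$. The specialization $g_{n,q}(\x,b)=g_{n,q}(\x+b)$ is the composition of the translation $\x\mapsto\x+b$, which is a bijection of $\mathbb{F}_Q$, with the PP $g_{n,q}$. It is therefore a PP in $\mathbb{F}_Q[\x]$. By the symmetry of $e_1$, the same argument applies to $g_{n,q}(a,\y)=g_{n,q}(a+\y)$ for every $a\in\mathbb{F}_Q$. Hence $g_{n,q}(\x,\y)$ is an LPP over $\mathbb{F}_Q$. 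Alternatively, one could invoke Lemma~\ref{lem 7.6}(1) with $h=g_{n,q}$ and $g=e_1(\x,\y)$, since $\x+\y$ is clearly an LPP; but the direct argument is just as short.

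For the direction ``$\Rightarrow$'', assume $g_{n,q}(\x,\y)$ is an LPP over $\mathbb{F}_Q$. Specializing $\y=0$ shows that $g_{n,q}(\x,0)=g_{n,q}(\x)$ is a PP of $\mathbb{F}_Q$, which is exactly the claim.

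The only point needing care is that the polynomial identity $g_{n,q}(\x,\y)=g_{n,q}(\x+\y)$ holds over $\mathbb{Z}$, and hence after reduction to $\mathbb{F}_p\subseteq\mathbb{F}_Q$. Both sides are then evaluated as functions on $\mathbb{F}_Q^2$, which needs no reduction modulo $\x^Q-\x$. I do not expect any real obstacle: the whole argument rests on Remark~\ref{rmk 5.3}.
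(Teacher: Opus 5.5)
Your proof is correct and is essentially the paper's argument: both rest on Remark~\ref{rmk 5.3}, $g_{n,q}(\x,\y)=g_{n,q}(\x+\y)$. For PP $\Rightarrow$ LPP the paper cites Lemma~\ref{lem 7.6}(1), which you check directly via translations, and for LPP $\Rightarrow$ PP it specializes one variable, exactly as you do with $\y=0$.
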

\begin{proof}
From Remark \ref{rmk 5.3} we have $g_{n,q}(\x,\,\y)=g_{n,q}(\x)\circ e_1(\x,\y).$  Now the necessity follows from Lemma \ref{lem 7.6} and the fact that $e_1(\x,\,\y)=\x+\y$ is an LPP over $\mathbb{F}_{Q}.$

Conversely, suppose that $g_{n,q}(x,\,y)$ is an LPP over $\mathbb{F}_{Q}.$ As $e_1(\x,\,a)$ is a PP over $\mathbb{F}_{Q}$ for each $a\in \mathbb{F}_{Q}$, $g_{n,q}(\x)$ must be a PP over $\mathbb{F}_{Q}.$
\end{proof}

In the following result, we explain the equivalence of the permutation property of $g_{n,q}(\x,\y)$ and $g_{n,q}(\x)$. 

\begin{prop}\label{prop 7.8}
The polynomial $g_{n,q}(\x,\,\y)$ is a permutation polynomial over $\mathbb{F}_{Q}$ if and only if $g_{n,q}(\x)$ is a PP over $\mathbb{F}_{Q}.$ 
\end{prop}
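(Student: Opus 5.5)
The plan is to reduce everything to a fiber count, using the factorization $g_{n,q}(\x,\y)=g_{n,q}(\x)\circ e_1(\x,\y)$ from Remark~\ref{rmk 5.3}. Throughout, $g_{n,q}(\x,\y)$ is viewed as a map $\mathbb{F}_Q^2\to\mathbb{F}_Q$. The argument has two directions.

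\emph{Sufficiency.} Suppose $g_{n,q}(\x)$ is a PP of $\mathbb{F}_Q$. Then Proposition~\ref{prop 7.7} shows that $g_{n,q}(\x,\y)$ is an LPP over $\mathbb{F}_Q$. Since every LPP is a permutation polynomial, we are done. Alternatively, one can apply Lemma~\ref{lem 7.6}(1) with $h=g_{n,q}(\x)$ and $g=e_1(\x,\y)=\x+\y$. The latter is a PP in two variables by Theorem~\ref{T2.2}, since $\x$ is a PP.

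\emph{Necessity.} Here I would count solutions directly rather than invoke Lemma~\ref{lem 7.6}, because that lemma requires $h$ to be a PP, which is exactly what we are trying to prove. For $\alpha\in\mathbb{F}_Q$, set
\[
N(\alpha)=\#\{(x,y)\in\mathbb{F}_Q^2 : g_{n,q}(x+y)=\alpha\}.
\]
The map $(x,y)\mapsto(x+y,y)$ is a bijection of $\mathbb{F}_Q^2$. Hence for each $t\in\mathbb{F}_Q$ the fiber $\{x+y=t\}$ has exactly $Q$ points, and
\[
N(\alpha)=Q\cdot\#\{t\in\mathbb{F}_Q : g_{n,q}(t)=\alpha\}.
\]
If $g_{n,q}(\x,\y)$ is a PP in two variables, then $N(\alpha)=Q$ for every $\alpha$. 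It follows that $g_{n,q}(t)=\alpha$ has exactly one solution $t$ for each $\alpha$, so $g_{n,q}(\x)$ is a PP of $\mathbb{F}_Q$.

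There is no serious obstacle. The only point needing care is in the necessity direction: we must not apply Lemma~\ref{lem 7.6} in the wrong direction, and we must justify that the fibers of $e_1$ are uniform of size $Q$. The explicit fiber count above handles both, and it extends verbatim to $k$ variables with fibers of size $Q^{k-1}$.
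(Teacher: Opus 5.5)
Your proposal is correct and follows the paper's proof: the direction starting from $g_{n,q}(\x)$ being a PP is deduced from Proposition~\ref{prop 7.7} (an LPP is a PP). The converse is the same fiber count as in the paper, where each of the $N$ roots $t$ of $g_{n,q}(t)=a$ contributes the $Q$ points of $\{x+y=t\}$, so $NQ=Q$ forces $N=1$; your explicit bijection $(x,y)\mapsto(x+y,y)$ just makes the uniform fiber size slightly more transparent than the paper's appeal to $e_1$ being a permutation polynomial.
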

\begin{proof}
Clearly, the necessary part follows from Proposition \ref{prop 7.7}.

 Conversely, assume that $g_{n,q}(\x,\,\y)$ is a permutation polynomial over $\mathbb{F}_{Q}.$ Let $a\in\mathbb{F}_{Q}.$ Let the solutions of the polynomial equation $g_{n,q}(\z)=a$ be $N.$ As $e_1(\x,\,\y)$ is a permutation polynomial, for each $z\in\mathbb{F}_{Q},\, e_1(\x,\,\y)=\z$ has $Q$ solutions in $\mathbb{F}_{Q}^2.$ That is, each $\z$ contributes $Q$ pairs of $(\x,\,\y)$ to the equation $e_1(\x,\,\y)=\z$. Therefore, we have $NQ$ pairs of $(\x,\,\y)\in\mathbb{F}_{Q}^2$ such that $g_{n,q}(e_1(\x,\,\y))=a$ for each $a\in\mathbb{F}_{Q}.$ But it is given that $g_{n,q}(\x,\,\y)$ is a permutation polynomial. Hence, $NQ=Q,$ which implies $N=1.$ Thus we have the desired result. 
\end{proof}

We obtain the following immediate corollary. 

\begin{cor}\label{cor 7.9}
The polynomial $g_{n,q}(\x,\,\y)$ is an  LPP over $\mathbb{F}_{Q}$ if and only if it is a permutation polynomial over $\mathbb{F}_{Q}.$ 
\end{cor}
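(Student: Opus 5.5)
The corollary follows by chaining Proposition~\ref{prop 7.7} and Proposition~\ref{prop 7.8}. Both of them characterize a property of the bivariate polynomial $g_{n,q}(\x,\y)$ over $\mathbb{F}_Q$ by the same condition, namely that the univariate $g_{n,q}(\x)$ is a PP of $\mathbb{F}_Q$. So I will not argue from the definitions; I will route both directions through that common condition.

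\emph{LPP implies PP.} Suppose $g_{n,q}(\x,\y)$ is an LPP over $\mathbb{F}_Q$. By Proposition~\ref{prop 7.7}, $g_{n,q}(\x)$ is a PP of $\mathbb{F}_Q$. By Proposition~\ref{prop 7.8}, $g_{n,q}(\x,\y)$ is then a permutation polynomial in two variables over $\mathbb{F}_Q$. This direction also follows from the general fact, recalled in the introduction, that every LPP is a permutation polynomial.

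\emph{PP implies LPP.} Suppose $g_{n,q}(\x,\y)$ is a permutation polynomial over $\mathbb{F}_Q$. The counting argument in the proof of Proposition~\ref{prop 7.8} shows that $g_{n,q}(\x)$ is a PP of $\mathbb{F}_Q$. Then Proposition~\ref{prop 7.7} gives that $g_{n,q}(\x,\y)=g_{n,q}(\x)\circ e_1(\x,\y)$ is an LPP.

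The only point that needs care is that the two propositions are applied in the right directions. The proofs as written label ``necessity'' and ``sufficiency'' loosely, so I would restate the mechanism of each to be safe.
\begin{itemize}
\item For ``$g_{n,q}(\x)$ PP $\Rightarrow$ LPP'': use Lemma~\ref{lem 7.6}(1) with $h=g_{n,q}(\x)$ and the LPP $\x+\y$. Concretely, fixing one variable at $a$ gives $g_{n,q}(\x+a)$, which is a composition of permutations.
\item For ``PP in two variables $\Rightarrow$ $g_{n,q}(\x)$ PP'': each fibre of $e_1$ has exactly $Q$ points, so the solution count of $g_{n,q}(\x,\y)=\alpha$ is $QN_\alpha$, where $N_\alpha$ is the number of solutions of $g_{n,q}(\z)=\alpha$. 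Setting $QN_\alpha=Q$ forces $N_\alpha=1$ for every $\alpha$.
\end{itemize}
With these two facts in place, the equivalence is immediate.
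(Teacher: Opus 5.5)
Your proposal is correct and follows the paper's route: the paper states this as an immediate corollary of Propositions~\ref{prop 7.7} and~\ref{prop 7.8}, since both are equivalent to $g_{n,q}(\x)$ being a PP of $\mathbb{F}_Q$. Your restatement of the two mechanisms is also accurate: composing with the LPP $\x+\y$ handles one direction, and the fibre count $QN_\alpha=Q$, which forces $N_\alpha=1$, handles the other.
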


As mentioned in the introduction, every local permutation polynomial is a permutation polynomial but the converse is not true in general. However, Corollary~\ref{cor 7.9} says that converse also holds in the case of polynomial $g_{n,q}(\x,\,\y)$. 

If we replace $g_{n,q}(\x)$ by any non constant polynomial $f(\x)\in\mathbb{F}_{Q}$ and $g_{n,q}(\x,\,\y)$ by $f(h(\x,\,\y))\in\mathbb{F}_{Q}[\x,\,\y],$ where $h(\x,\,\y)$ is an LPP over $\mathbb{F}_{Q}$, then the Proposition \ref{prop 7.7}, Proposition \ref{prop 7.8} and Corollary \ref{cor 7.9} still hold. We can rewrite them as follows.
\begin{thm}\label{thm 7.10}
Let $f(\x)\in\mathbb{F}_{Q}[x]$ be any non constant polynomial and $g(\x,\,\y)=f(h(\x,\,\y)),$ where $h(\x,\,\y)\in\mathbb{F}_{Q}[\x,\y]$ be any LPP over $\mathbb{F}_{Q}.$ Then the following are equivalent: \begin{enumerate} 
\item $f(\x)$ is a PP over $\mathbb{F}_{Q}$.
\item $g(\x,\,\y)$ is a local permutation polynomial over $\mathbb{F}_{Q}$.
\item $g(\x,\,\y)$ is a permutation polynomial over $\mathbb{F}_{Q}$.
\end{enumerate} 
\end{thm}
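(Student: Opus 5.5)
I will prove the equivalences through the cycle (1)$\Rightarrow$(2)$\Rightarrow$(3)$\Rightarrow$(1). The one fact about $h$ I need is this: since $h(\x,\y)$ is an LPP over $\mathbb{F}_Q$, for each fixed $b\in\mathbb{F}_Q$ the map $x\mapsto h(x,b)$ is a bijection of $\mathbb{F}_Q$. Consequently, for every $z\in\mathbb{F}_Q$ the equation $h(x,y)=z$ has exactly one solution $x$ for each of the $Q$ values of $y$, hence exactly $Q$ solutions in $\mathbb{F}_Q^2$. In particular $h$ is a permutation polynomial in two variables.

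\emph{(1)$\Rightarrow$(2).} Assume $f$ is a PP of $\mathbb{F}_Q$. Apply Lemma~\ref{lem 7.6}(1) with $g=h$ (which is non-constant, being an LPP) and with the univariate PP $f$ in place of $h$. This gives that $f(h(\bar\x))$ is an LPP, since $h$ is. One can also check it directly: for fixed $b$, $x\mapsto f(h(x,b))$ is a composition of two bijections of $\mathbb{F}_Q$, and the same holds in the other variable.

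\emph{(2)$\Rightarrow$(3).} Every LPP is a permutation polynomial. Fix $y$ and count the solutions in $x$: each of the $Q$ values of $y$ contributes exactly one solution of $g(x,y)=\alpha$, for a total of $Q=Q^{2-1}$.

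\emph{(3)$\Rightarrow$(1).} This is the counting argument of Proposition~\ref{prop 7.8}. Fix $\alpha\in\mathbb{F}_Q$ and let $N_\alpha=\#\{z\in\mathbb{F}_Q: f(z)=\alpha\}$. Each such $z$ has exactly $Q$ preimages $(x,y)$ under $h$, and these fibres are disjoint for distinct $z$. So the number of solutions of $g(x,y)=\alpha$ is $N_\alpha Q$. Hypothesis (3) says this number equals $Q$, so $N_\alpha=1$ for every $\alpha$, and $f$ is a PP.

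The only point needing care is the fibre count $\#h^{-1}(z)=Q$ for every $z$. The local permutation property in a single variable suffices for this, and that is exactly why the theorem assumes $h$ is an LPP rather than only a PP. A merely permutational $h$ would in fact give the same fibre count, but it would not give (1)$\Rightarrow$(2). Beyond that, the argument is routine.
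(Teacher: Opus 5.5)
Your proof is correct and is essentially the paper's argument. You use the same cycle (1)$\Rightarrow$(2)$\Rightarrow$(3)$\Rightarrow$(1), with (1)$\Rightarrow$(2) from Lemma~\ref{lem 7.6} (or directly as a composition of bijections), (2)$\Rightarrow$(3) as the standard ``LPP implies PP'' count, and (3)$\Rightarrow$(1) via the fibre-counting argument of Proposition~\ref{prop 7.8}; you additionally spell out why each fibre $h^{-1}(z)$ has exactly $Q$ points, which the paper leaves implicit.
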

\begin{proof}
It is easy to see that $(1)\implies (2)$ follows from Lemma~\ref{lem 7.6}. The implication $(2)\implies (3)$ is obvious, and $(3)\implies (1)$ follows from the proof of second part of Proposition~\ref{prop 7.8}. Hence, all three statements are equivalent.
\end{proof}
We point out to the reader that Theorem \ref{thm 7.10} also holds in multivariate case. 

Since the symmetric polynomial $e_1$ with $k$ variables is an LPP for each $k\geq 2$, studying the local permutation or permutation behaviour of the multivarite $g$-polynomial is equivalent to studying the permutation behaviour of its univariate case. Because the permutation beahviour of the univariate case has well been studied, for the rest of the paper, we will be investigating the permutation and the local permutation behaviour of the polynomial
$$g_{n,q}^{(\ell)}(\x_1,\,\x_2,\dots,\,\x_k)=g_{n,q}(\x)\circ\sigma_{\ell}(\x_1,\,\x_2,\dots,\,\x_k),$$ where $\sigma_{\ell}=\sigma_{\ell}(\x_1,\,\x_2,\dots,\,\x_k)=\x_1^{\ell}+\x_2^{\ell}+\cdots +\x_k^{\ell},$ 
where $\ell\geq 1.$ 

\begin{thm}\label{thm 7.11}
Let $\gcd(\ell,\,Q-1)=1.$ Then $g_{n,q}^{(\ell)}\in\mathbb{F}_{Q}[\bar{\x}]$ is a (local) permutation polynomial over $\mathbb{F}_{Q}$ if and only if $g_{n,q}(\x)$ is a PP over $\mathbb{F}_{Q}.$
\end{thm}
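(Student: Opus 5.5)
The plan is to reduce the statement to the case $\ell=1$, which has already been handled, and then use the multivariate form of Theorem~\ref{thm 7.10}. Since $\gcd(\ell,Q-1)=1$, the monomial $h(\x)=\x^{\ell}$ is a permutation polynomial of $\mathbb{F}_Q$. Take $h_1=\cdots=h_k=h$. Then
\[
g_{n,q}^{(\ell)}(\x_1,\ldots,\x_k)=g_{n,q}\bigl(e_1(h_1(\x_1),\ldots,h_k(\x_k))\bigr)=G(h_1(\x_1),\ldots,h_k(\x_k)),
\]
where $G=g_{n,q}(\x)\circ e_1(\x_1,\ldots,\x_k)=g_{n,q}(\x_1,\ldots,\x_k)$.

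If $g_{n,q}$ is constant, it is not a PP. In that case $g_{n,q}^{(\ell)}$ is also constant, hence neither a PP nor an LPP (for $Q>1$). So both sides of the equivalence are false, and we may assume $g_{n,q}$ is nonconstant. Then $G$ is nonconstant, and Lemma~\ref{lem 7.6}(2) applies. It says that $g_{n,q}^{(\ell)}$ is a (local) permutation polynomial if and only if $G$ is a (local) permutation polynomial.

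It remains to show that $G$ is an LPP, respectively a PP, if and only if $g_{n,q}(\x)$ is a PP of $\mathbb{F}_Q$. This is the $k$-variable version of Theorem~\ref{thm 7.10}, with inner polynomial $e_1(\x_1,\ldots,\x_k)$, which is an LPP. I will write out its three short steps rather than only citing the remark after that theorem.

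\begin{enumerate}
\item If $g_{n,q}$ is a PP, then Lemma~\ref{lem 7.6}(1), applied with the LPP $e_1$, shows that $G$ is an LPP. Hence $G$ is also a PP.
\item If $G$ is an LPP, fix $\x_2=\cdots=\x_k=0$. Then $G(\x_1,0,\ldots,0)=g_{n,q}(\x_1)$ must be a PP.
\item If $G$ is a PP, count solutions. For each $z\in\mathbb{F}_Q$, the equation $e_1=z$ has exactly $Q^{k-1}$ solutions in $\mathbb{F}_Q^k$. So if $N_a$ is the number of roots of $g_{n,q}(\z)=a$, then $G=a$ has $N_aQ^{k-1}$ solutions. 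The PP hypothesis forces $N_aQ^{k-1}=Q^{k-1}$, so $N_a=1$ for every $a$, and $g_{n,q}$ is a PP.
\end{enumerate}

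None of these steps is a real obstacle. The only care needed is to match the ``(local)'' qualifier consistently: the LPP statement should be read against the LPP statement, and the PP statement against the PP statement. Both routes end at the same univariate condition, so the theorem holds in either reading.
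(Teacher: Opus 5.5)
Your proof is correct and is essentially the paper's argument, with the same two ingredients applied in the other order. The paper applies Lemma~\ref{lem 7.6}(2) to $e_1$ to see that $\sigma_\ell$ is an LPP, then invokes the multivariate form of Theorem~\ref{thm 7.10} with inner polynomial $\sigma_\ell$; you apply Lemma~\ref{lem 7.6}(2) to the composite $G$, then run the $k$-variable Theorem~\ref{thm 7.10} with inner polynomial $e_1$, and your explicit steps, including the constant case, fill in details the paper only asserts.
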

\begin{proof}
We have $\sigma_{\ell}(\x_1,\,\x_2,\dots,\,\x_k)=e_1(\x_1^{\ell},\,\x_2^{\ell},\dots,\,\x_k^{\ell})$ and $e_1$ is always an LPP. Therefore by (2) of Lemma \ref{lem 7.6}, $\sigma_{\ell}$ is a local permutation polynomial when $\gcd(\ell,\,Q-1)=1.$ Thus by Theorem \ref{thm 7.10}, $g_{n,q}^{(\ell)}$ is an LPP over $\mathbb{F}_{Q}$ if and only if $g_{n,q}^{(\ell)}$ is a PP over $\mathbb{F}_{Q}$ if and only if $g_{n,q}(\x)$ is a PP over $\mathbb{F}_{Q}$.
\end{proof}

\begin{rmk}
We point out to the reader that Corollary \ref{cor 7.9} is a particular case of  Theorem \ref{thm 7.11} with $\ell=1$ and $k=2$. Due to Theorem \ref{thm 7.11}, from now on, we study the case where $\gcd(\ell,\,Q-1)\neq 1$. 
\end{rmk}

 \begin{thm}
     Let $\gcd(\ell,\,Q-1)\neq 1.$ Then $g_{n,q}^{(\ell)}$ can never be an LPP over $\mathbb{F}_{Q}.$
 \end{thm}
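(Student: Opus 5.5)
The proof will go directly through the definition of an LPP by fixing all but one variable. Suppose, for contradiction, that $g_{n,q}^{(\ell)}=g_{n,q}\circ\sigma_\ell$ is an LPP over $\mathbb{F}_Q$. Take $i=1$ and fix $\x_2=a_2,\dots,\x_k=a_k$ with every $a_j=0$, or indeed any fixed values. Write $c=a_2^\ell+\cdots+a_k^\ell\in\mathbb{F}_Q$. The LPP hypothesis then says that the univariate polynomial
\[
h(\x_1)=g_{n,q}\bigl(\x_1^{\ell}+c\bigr)
\]
is a permutation polynomial of $\mathbb{F}_Q$. If $k=1$ there are no other variables and $h(\x_1)=g_{n,q}(\x_1^\ell)$, so the same argument applies.

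The key step is to see that $h$ cannot be injective, because its inner map $\x_1\mapsto \x_1^\ell+c$ already fails to be injective on $\mathbb{F}_Q$. Let $d=\gcd(\ell,Q-1)>1$. It is standard that $\x^\ell$ permutes $\mathbb{F}_Q$ if and only if $d=1$. Concretely, choose $\omega\in\mathbb{F}_Q^*$ of order $d$; such an element exists since $d\mid Q-1$ and $\mathbb{F}_Q^*$ is cyclic. Then $\omega\neq 1$, but $\omega^\ell=1$ because $d\mid\ell$. Hence $1$ and $\omega$ are distinct elements with $1^\ell+c=\omega^\ell+c$, and therefore $h(1)=h(\omega)$. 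This contradicts $h$ being a PP of $\mathbb{F}_Q$. So $g_{n,q}^{(\ell)}$ is not an LPP, and since the choice of $a_2,\dots,a_k$ was arbitrary, the failure occurs on every line parallel to the $\x_1$-axis.

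There is no real obstacle here. The only point needing care is an edge case in characteristic $p$: if $\ell$ is a power of $p$, or more generally $p\mid\ell$, then $\x^\ell$ could still be a bijection. However, $\gcd(\ell,Q-1)\neq1$ is exactly the condition under which $\x^\ell$ is not a bijection of $\mathbb{F}_Q$, so the explicit pair $1,\omega$ settles every case uniformly.

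One could instead phrase the argument through Lemma~\ref{lem 7.6} or Theorem~\ref{thm 7.10}. However, those require the inner polynomial to be an LPP, which fails here, so the direct collision argument is the cleanest. Note that the conclusion holds regardless of whether $g_{n,q}$ itself is a PP.
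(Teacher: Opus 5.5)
Your proof is correct and follows essentially the same route as the paper. The paper cites Theorem~\ref{thm 6.4} to conclude that $\sigma_\ell$ is not an LPP, then uses the fact that a composition with a non-permuting inner map cannot permute; you do the same thing directly from the definition, restricting to a coordinate line and exhibiting the explicit collision $h(1)=h(\omega)$, which makes explicit the restriction step the paper leaves implicit and avoids the decomposition theorem.
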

 \begin{proof}
     It follows from Theorem~\ref{thm 6.4} that, $\sigma_{\ell}$ is not an LPP over $\mathbb{F}_{Q},$ whenever $\gcd(\ell,\,Q-1)\neq 1.$ The proof follows from the fact that the composition of two univariate polynomials is not a PP whenever one of them is not a PP.
 \end{proof}

Next, we will investigate the permutation behaviour of $g_{n,q}^{(\ell)},$ when $\gcd(\ell,\,q^e-1)\neq 1.$

Let $g_{n,q}$ be a PP over $\mathbb{F}_{Q}$ and $N$ be the number of solutions of $g_{n,q}^{(\ell)}(\bar{\x})=a,$ where $a=g_{n,q}(b)$ for some $b\in\mathbb{F}_{Q}.$ As $g_{n,q}^{(\ell)}(\bar{\x})=g_{n,q}(\sigma_{\ell}(\bar{\x}))$ and $g_{n,q}$ is a PP, we must have $\sigma_{\ell}(\bar{\x})=b.$ This means that $\sigma_{\ell}(\bar{\x})=b$ has exactly $N$ solutions. If $N\neq Q^{k-1},$ for some $b\in\mathbb{F}_{Q},$  then $g_{n,q}^{(\ell)}$ is not a PP over $\mathbb{F}_Q$.

Thus, the problem reduces to counting the number of solutions of $\sigma_l(\x_1,\,\x_2,\dots,\,\x_k)=b$, a special class of diagonal equations over $\mathbb{F}_{Q}$ where all coefficients are equal to $1$ and all exponents equal to $\ell$.  A considerable amount of research has been devoted to the study of solutions of diagonal equations over finite fields. As a particular case of \cite[Theorem~6.33, Theorem 6.34]{Lidl-Niederreiter-97}, we have the following result.
\begin{lem}(\cite{Lidl-Niederreiter-97})
   Let $b\in\mathbb{F}_Q.$ The number of solutions $N$ of $\sigma_l(\x_1,\,\x_2,\,\dots,\,\x_k)=b$ in $\mathbb{F}_Q^n$ is given by
$$N=Q^{k-1}+\sum_{i_1=1}^{d-1}\cdots\sum_{i_k=1}^{d-1} J_0(\lambda_1^{i_1},\dots,\lambda_k^{i_k}),$$
   
    where $\lambda_i,\,1\leq i\leq k$ is a multiplicative character of $\mathbb{F}_Q$ of order $d=\gcd(k,\,Q-1)$ and $$J_b(\lambda_1^{i_1},\dots,\lambda_k^{i_k})=\sum_{c_1+\cdots+c_k=b}^{} \lambda_1^{i_1}(c_1)\cdots\lambda_k^{i_k}(c_k)$$ is the Jacobi sum.
\end{lem}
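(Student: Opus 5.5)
The plan is to count the solutions of $\sigma_\ell(\x_1,\dots,\x_k)=b$ by decomposing according to the values $c_i=x_i^{\ell}$. This is the argument behind \cite[Theorem~6.33, Theorem~6.34]{Lidl-Niederreiter-97}. I read $d$ as $\gcd(\ell,Q-1)$: the exponent is what governs the character sums, so $\gcd(k,Q-1)$ looks like a typo. I read all $\lambda_i$ as one fixed multiplicative character $\lambda$ of order $d$, and I use the convention $\lambda^j(0)=0$ for $j\not\equiv 0$ and $\lambda^0(0)=1$.

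\textbf{Step 1: count $\ell$-th roots.} For each $c\in\mathbb{F}_Q$, the number of $x\in\mathbb{F}_Q$ with $x^\ell=c$ equals $\sum_{j=0}^{d-1}\lambda^j(c)$.
\begin{itemize}
\item For $c=0$ this count is $1$, and the sum is also $1$ by the convention.
\item For $c\neq 0$, the image of $x\mapsto x^\ell$ on $\mathbb{F}_Q^*$ is the subgroup of $d$-th powers, and each element of it has exactly $d$ preimages.
\item The sum $\sum_j\lambda^j(c)$ equals $d$ when $c$ is a $d$-th power and $0$ otherwise, since $\lambda$ has order $d$ and its kernel is exactly the $d$-th powers.
\end{itemize}

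\textbf{Step 2: expand the count.} It follows that
$$N=\sum_{c_1+\cdots+c_k=b}\ \prod_{i=1}^k\sum_{j_i=0}^{d-1}\lambda^{j_i}(c_i)=\sum_{j_1,\dots,j_k=0}^{d-1} J_b(\lambda^{j_1},\dots,\lambda^{j_k}).$$

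\textbf{Step 3: sort the terms.}
\begin{itemize}
\item The term with all $j_i=0$ counts all $k$-tuples with prescribed sum, which gives $Q^{k-1}$.
\item Suppose some but not all $j_i$ are $0$; say $j_k=0$ with some other $j_i\neq 0$. Summing over the free coordinate $c_k$ leaves $c_1,\dots,c_{k-1}$ unconstrained. The sum then factors into products that contain a full sum $\sum_{c\in\mathbb{F}_Q}\lambda^{j}(c)$ of a nontrivial character, which is $0$.
\item So only the tuples with $1\le j_i\le d-1$ for every $i$ survive. This gives
$$N=Q^{k-1}+\sum_{i_1=1}^{d-1}\cdots\sum_{i_k=1}^{d-1}J_b(\lambda^{i_1},\dots,\lambda^{i_k}),$$
which is the stated formula when $b=0$.
\end{itemize}

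\textbf{Step 4: the case $b\neq 0$.} Substitute $c_i=bc_i'$. This gives
$$J_b(\lambda^{i_1},\dots,\lambda^{i_k})=\lambda^{i_1+\cdots+i_k}(b)\,J_1(\lambda^{i_1},\dots,\lambda^{i_k}),$$
which is the form in \cite[Theorem~6.34]{Lidl-Niederreiter-97}. I will state this explicitly, since the displayed formula with $J_0$ literally describes only the $b=0$ count.

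The main obstacle is not computational. It is fixing the conventions so that the statement is correct: the meaning of $d$, the value of $\lambda^j(0)$, and whether the sum should involve $J_0$ or $J_b$. Once these are settled, the remaining work is the vanishing argument in Step 3, which is the only place where care is needed.
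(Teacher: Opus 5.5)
Your proof is correct, and it is the standard argument behind \cite[Theorems~6.33, 6.34]{Lidl-Niederreiter-97}; the paper simply cites those results. You count $\ell$-th roots via $\sum_{j=0}^{d-1}\lambda^j(c)$, expand the count into Jacobi sums, and show that the sums mixing trivial and nontrivial characters vanish. Your corrections to the statement are also right: $d$ must be $\gcd(\ell,Q-1)$, which is what Remark~\ref{rmk 7.14} presupposes; the count is in $\mathbb{F}_Q^k$; and the displayed formula with $J_0$ holds only for $b=0$, while for $b\neq 0$ the terms are $J_b=\lambda^{i_1+\cdots+i_k}(b)\,J_1$.
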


\begin{rmk}\label{rmk 7.14}
Since $N$ does not depend directly on the exponent $\ell,$ but only on the greatest common divisor $d$ of $\ell$ and $Q-1$, we have $N(\sigma_{\ell}=b)=N(\sigma_d=b).$ 
\end{rmk}
\begin{prop}\label{prop 7.15}
    If $\gcd(\ell,\,Q-1)=2,$ then $\sigma_{\ell}\in\mathbb{F}_Q[\x_1,\,\dots,\,\x_k]$ is not a PP over $\mathbb{F}_Q.$
\end{prop}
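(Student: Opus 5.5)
Since $\gcd(\ell,Q-1)=2$ forces $Q-1$ to be even, $Q$ is odd throughout. By Remark~\ref{rmk 7.14}, the number of solutions of $\sigma_\ell(\bar{\x})=b$ equals the number of solutions of $\sigma_2(\bar{\x})=b$ for every $b\in\mathbb{F}_Q$. So I would reduce to the diagonal quadratic form
$$\x_1^2+\x_2^2+\cdots+\x_k^2=b .$$
It then suffices to exhibit one $b\in\mathbb{F}_Q$ for which the number of solutions $N(b)$ differs from $Q^{k-1}$.

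Rather than evaluating the Jacobi sums in the preceding lemma directly, I would invoke the classical explicit count for nondegenerate quadratic forms over a finite field of odd order (Lidl--Niederreiter, Theorems 6.26 and 6.27). Let $\eta$ denote the quadratic character of $\mathbb{F}_Q$. The form $\x_1^2+\cdots+\x_k^2$ has determinant $1$, and the count splits according to the parity of $k$.

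\emph{Case $k$ even.} Here
$$N(b)=Q^{k-1}+\upsilon(b)\,Q^{(k-2)/2}\,\eta\bigl((-1)^{k/2}\bigr),$$
where $\upsilon(0)=Q-1$ and $\upsilon(b)=-1$ for $b\neq 0$. Taking $b=0$ gives $N(0)=Q^{k-1}\pm(Q-1)Q^{(k-2)/2}\neq Q^{k-1}$.

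\emph{Case $k$ odd.} Here
$$N(b)=Q^{k-1}+Q^{(k-1)/2}\,\eta\bigl((-1)^{(k-1)/2}b\bigr).$$
Since $Q$ is odd, some $b\neq 0$ makes $\eta\bigl((-1)^{(k-1)/2}b\bigr)=1$, and for that $b$ we get $N(b)=Q^{k-1}+Q^{(k-1)/2}\neq Q^{k-1}$. This case also covers $k=1$, where the conclusion is simply that $\x^2$ is not a PP of $\mathbb{F}_Q$ for odd $Q$.

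In both cases the equation $\sigma_\ell(\bar{\x})=b$ fails to have exactly $Q^{k-1}$ solutions for some $b$. Hence $\sigma_\ell$ is not a PP over $\mathbb{F}_Q$.

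The main obstacle is justifying the use of the quadratic-form counts instead of the Jacobi-sum expression in the lemma as stated. If one insists on working from that lemma, one must show the error sum $\sum J_0$ is nonzero. Since $d=2$, only the quadratic character occurs, and the sum reduces to $J_0(\eta,\dots,\eta)$. This is zero for $k$ odd and $\pm(Q-1)Q^{(k-2)/2}$ for $k$ even, so the $b=0$ test works only for even $k$. For odd $k$ one therefore needs $J_b$ with $b\neq 0$, which equals $\eta(b)\,\eta(-1)^{(k-1)/2}Q^{(k-1)/2}$ by Gauss sum evaluation. I would cite the classical theorems and mention this Jacobi-sum check only as a consistency remark.
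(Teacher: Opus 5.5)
Your proposal is correct and is essentially the paper's argument: reduce to $\sigma_2$ via Remark~\ref{rmk 7.14}, then apply the Lidl--Niederreiter count for $\x_1^2+\cdots+\x_k^2=b$ separately for each parity of $k$. The only difference is cosmetic: for even $k$ you test $b=0$, whereas the paper uses any $b\neq 0$ in both cases (there $N=Q^{k-1}-Q^{(k-2)/2}\eta((-1)^{k/2})\neq Q^{k-1}$ already), and your side observation that the $b=0$ Jacobi-sum test fails for odd $k$ is accurate.
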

\begin{proof}
    By the result of Lidl and Niederreiter \cite [p. 284-285]{Lidl-Niederreiter-97}, the following explicit formula is obtained for the number of solutions $N$ of $\sigma_2(\x_1,\,\dots,\,\x_k)=b,$ where $b\in\mathbb{F}_Q^*.$ 
    \[
    \begin{split}
        N&=\begin{cases}
        Q^{k-1}-Q^{\frac{k-2}{2}}\eta((-1)^{\frac{k}{2}})&\text{if}~k~\text{is even},\cr
        Q^{k-1}+Q^{\frac{k-1}{2}}\eta((-1)^{\frac{k-1}{2}}b)&\text{if}~k~\text{is odd},
         \end{cases}
    \end{split}
    \]
    
    where $\eta$ is the quadratic character of $\mathbb{F}_Q.$ As $\eta(c)\neq 0,$ for any $c\in\mathbb{F}_Q^*,$ $N\neq Q^{k-1}.$ Now it follows from Remark~\ref{rmk 7.14} that for any $b\in\mathbb{F}_Q^*,$ the number of solutions of $\sigma_l(\x_1,\,\dots,\,\x_k)=b,$ whenever $\gcd(\ell,\,Q-1)=2$, is never $Q^{k-1}.$ 
\end{proof}

The following definition of $(p,\, r)$-\textit{admissible} integers first appeared in \cite{JAO-2021} to the best of our knowledge. 
 
\begin{defn}
    Let $r>0$ be an integer. An integer $d$ is said to be $(p,\, r)$-\textit{admissible} if $r$ is the smallest integer such that $d\mid (p^r+1)$. 
\end{defn}
\begin{lem}\cite[Theorem 2.14, Remark 7.3]{JAO-2021}\label{lem 7.17}
    Let $N$ denote the number of solutions of $\sigma_d(\x_1,\,\dots,\,\x_k)=0$ in $\mathbb{F}_Q^n,$ where $d>2$ is a divisor of $(Q-1)$ and $k>2.$ Also assume that $(k,\,d)\neq(4,\,3).$ Then the following are equivalent.
    \begin{enumerate}
        \item 
        $|N-Q^{k-1}|=M_k(d)(Q-1)Q^{\frac{k-2}{2}},$ where $M_k(d)$ is the cardinality of the set of all $(j_1,\,j_2,\,\dots,\,j_k)\in\mathbb{Z}^k$ such that $1\leq j_i \leq d-1$ and $j_1+j_2+\cdots+j_k\equiv 0 \pmod d.$
        \item 
        $Q=p^s$, where $s$ is even, and $d$ is $(p,\,r)$- admissible.
    \end{enumerate}
    Moreover, if $k=2$, then $N=Q+(Q-1)(d-1)$. 
\end{lem}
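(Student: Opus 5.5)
The plan is to follow the standard Gauss-sum route for diagonal equations, which is also how \cite{JAO-2021} proceeds. Let $\lambda$ be a multiplicative character of $\mathbb{F}_Q$ of order $d$ and $\psi$ the canonical additive character. First I will count the zeros of $\sigma_d$ by orthogonality of $\psi$. For each $x_i$ I will rewrite the inner sum $\sum_{x}\psi(cx^d)$ as $\sum_{j=0}^{d-1}\bar\lambda^{j}(c)G(\lambda^j)$, where $G(\chi)$ denotes the Gauss sum of $\chi$ with respect to $\psi$. Expanding the product and summing over $c\in\mathbb{F}_Q^*$ should give
\[
N=Q^{k-1}+\frac{Q-1}{Q}\sum_{\substack{1\le j_1,\dots,j_k\le d-1\\ j_1+\cdots+j_k\equiv 0 \ (\mathrm{mod}\ d)}}\ \prod_{i=1}^k G(\lambda^{j_i}).
\]
This is consistent with the Jacobi-sum formula quoted in the previous lemma. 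Since $|G(\lambda^{j})|=Q^{1/2}$ for $1\le j\le d-1$, the triangle inequality immediately gives $|N-Q^{k-1}|\le M_k(d)(Q-1)Q^{(k-2)/2}$. Equality holds exactly when all $M_k(d)$ products $\prod_i G(\lambda^{j_i})$ are equal to one common number of modulus $Q^{k/2}$, necessarily real.

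For (2)$\Rightarrow$(1), I will invoke the semiprimitive (Stickelberger / Baumert--Mills--Ward) evaluation of Gauss sums. If $d$ is $(p,r)$-admissible, then $-1\equiv p^r \pmod d$, and since $d\mid Q-1$ with $Q=p^s$ one gets $2r\mid s$. In that case every $G(\lambda^j)$ with $1\le j\le d-1$ equals $\epsilon_j\sqrt{Q}$. Here $\epsilon_j=\pm1$, and the sign depends only on the parity of $s/(2r)$ and, when $p>2$, on the parity of $j(p^r+1)/d$. I then need to check that the product of the signs over an admissible tuple is constant. When $\epsilon_j$ is independent of $j$ this holds because $k$ is fixed. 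Otherwise the sign is governed by $\sum_i j_i(p^r+1)/d$, and the condition $\sum_i j_i\equiv 0\pmod d$ should control its parity. Once the sign is constant, all terms add coherently and equality holds.

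For (1)$\Rightarrow$(2), equality forces every admissible product to take the same value. I will compare tuples that differ in few coordinates. For example, $(j,-j,a_3,\dots,a_k)$ and $(j',-j',a_3,\dots,a_k)$ give $G(\lambda^j)G(\lambda^{-j})=G(\lambda^{j'})G(\lambda^{-j'})$. Similarly, $(j,j',-j-j',\dots)$ versus $(j+j',0,\dots)$-type swaps, with the zero index avoided by using $k>2$, give that the Jacobi sums $J(\lambda^j,\lambda^{j'})=G(\lambda^j)G(\lambda^{j'})/G(\lambda^{j+j'})$ are real. From this I want to conclude that $G(\lambda)/\sqrt Q$ is a real root of unity, i.e. that $G(\lambda)$ is a pure Gauss sum, essentially $\pm\sqrt Q$. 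The hypothesis $k>2$ supplies enough free coordinates to build these comparisons, and the excluded case $(k,d)=(4,3)$ is presumably exactly where there are too few tuples to separate the Gauss sums. Finally, I will apply the characterization of pure Gauss sums (Aoki/Evans): $G(\lambda^j)\in\mathbb{R}\sqrt Q$ for all $j$ holds iff $-1$ is a power of $p$ modulo $d$. This yields the smallest $r$ with $d\mid p^r+1$, and hence $s$ even. This direction is the main obstacle, particularly handling small $d$ and $k$ where the family of admissible tuples is thin. There I would fall back on a case check or on the Stickelberger congruence for the $\mathfrak p$-adic valuation of $G(\lambda^j)$.

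For the final clause with $k=2$, I will count directly. The pair $(0,0)$ is one solution. For each $y\neq0$, the equation $x^d=-y^d$ has $d$ solutions once $-1$ is a $d$-th power, which holds here since $Q$ is even or $d\mid (Q-1)/2$ under the admissibility assumption. This gives $N=1+d(Q-1)=Q+(Q-1)(d-1)$.
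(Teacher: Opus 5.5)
The paper gives no proof of this lemma; it is quoted from the literature, so your outline has to stand on its own. Several parts of it are fine:
\begin{itemize}
\item the character-sum formula and the triangle-inequality bound;
\item the diagnosis of $(4,3)$: the admissible tuples are only the permutations of $(1,1,2,2)$, whose product is always $Q^2$;
\item the $k=2$ count, which is correct because you use admissibility to make $-1$ a $d$-th power;
\item $(2)\Rightarrow(1)$.
\end{itemize}
For the last item, $G(\lambda^j)=(-1)^{\gamma-1+\gamma j(p^r+1)/d}\sqrt Q$ with $\gamma=s/(2r)$ (drop the last term if $p=2$). Since $(p^r+1)\cdot\frac1d\sum_i j_i$ is even for odd $p$, every admissible product equals $(-1)^{k(\gamma-1)}Q^{k/2}$.

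The gap is in $(1)\Rightarrow(2)$: no comparison of admissible tuples can show that $G(\lambda)$ is real. The equality condition is invariant under $G(\lambda^j)\mapsto\beta^jG(\lambda^j)$ for any $\beta$ with $\beta^d=1$; this is exactly the effect of replacing $\psi$ by $\psi(c\,\cdot)$.
\begin{itemize}
\item For $k\ge4$, $d\ge4$, your comparisons yield exactly $G(\lambda^j)=\pm\beta^j\sqrt Q$ with $\beta^d=1$, and nothing more.
\item For $k=3$ your pair comparisons are not even available, since the tail would be a single nonzero residue $\equiv0$. The products only give $J(\lambda^a,\lambda^b)=\pm\lambda^{a+b}(-1)\sqrt Q$.
\end{itemize}
So you never reach the hypothesis $G(\lambda^j)\in\mathbb{R}\sqrt Q$ of the criterion you quote. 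What you do reach is that every Jacobi sum $J(\lambda^a,\lambda^b)$ with $a,b,a+b\not\equiv0\pmod d$ equals $\pm\sqrt Q$. The whole content of this direction is that this forces $-1\in H:=\langle p\rangle\le(\mathbb{Z}/d\mathbb{Z})^*$, and your proposal leaves exactly that implication to an imprecise citation. It genuinely needs the lower-order characters $\lambda^j$ with $\gcd(j,d)>1$. Purity of the order-$d$ Gauss sums alone is not enough: for $d=14$, $p=23$ the Gauss sums of order $14$ over $\mathbb{F}_{23^3}$ are pure, although $\langle 23\rangle=\{1,9,11\}\not\ni -1$ modulo $14$.

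One way to close the gap uses Stickelberger directly.
\begin{enumerate}
\item Let $f=|H|$ and $S(x)=\sum_{i=0}^{f-1}\{xp^i/d\}$. Stickelberger turns $J(\lambda^a,\lambda^b)=\pm\sqrt Q$, whose valuation is $s/2$, into $S(a)+S(b)-S(a+b)=f/2$ for all $a,b,a+b\not\equiv0$.
\item Since also $S(x)+S(-x)=f$ for $x\not\equiv0$, the map $x\mapsto S(x)-f/2$ (with $0\mapsto0$) is a homomorphism $\mathbb{Z}/d\mathbb{Z}\to\mathbb{Q}$. Hence it is zero.
\item Suppose $-1\notin H$. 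Pick an odd character trivial on $H$, and let $\chi^*$ be the primitive character of conductor $d^*\mid d$ inducing it.
\item Multiply $S((d/d^*)y)=f/2$ by $\chi^*(y)$ and sum over $y\in(\mathbb{Z}/d^*\mathbb{Z})^*$. Using $\chi^*(p)=1$, this gives $f\,B_{1,\chi^*}=0$, contradicting $B_{1,\chi^*}\neq0$ for odd primitive $\chi^*$.
\item Hence $p^r\equiv-1\pmod d$ for some $r$, and the least such $r$ makes $d$ $(p,r)$-admissible. Because $p$ then has order $2r$ modulo $d$ and $d\mid p^s-1$, $s$ is even.
\end{enumerate}
Equivalently, you may invoke the classical supersingularity criterion for the Fermat curve $x^d+y^d=1$ (Shioda--Katsura): it is supersingular iff $-1$ is a power of $p$ modulo $d$.
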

The case $(k,\,d)=(4,\,3)$ is explained in the following lemma that appeared in \cite{JGC-2025}.
\begin{lem}(\cite{JGC-2025})\label{lem 7.18}
Let $Q=p^s,$ where $s$ is even and a divisor $d\geq 3$ of $(Q-1)$ be $(p,\,r)$-admissible. Then the number of roots $N^*$ of $\sigma_d(\x_1,\,\dots,\,\x_k)$ in $(\mathbb{F}_Q^*)^k$ is given by 
\begin{equation}\label{eqn 7.1}
N^*=\frac{1}{Q}\left((Q-1)^k+\frac{Q-1}{d}A(d)^k+\frac{(Q-1)(d-1)}{d}B(d)^k\right),
\end{equation}
where 
\[
\begin{split}
    A(d)&=-1-(-1)^{\frac{s}{2r}}(d-1)Q^{\frac{1}{2}},\cr
    B(d)&=-1+(-1)^{\frac{s}{2r}}Q^{\frac{1}{2}}.
\end{split}  
\]
\end{lem}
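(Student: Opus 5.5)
This is a point count over $(\mathbb{F}_Q^*)^k$, and I would do it with orthogonality of additive characters combined with the known evaluation of pure Gauss sums in the admissible case. Let $\psi$ be the canonical additive character of $\mathbb{F}_Q$. Orthogonality gives
$$N^*=\frac{1}{Q}\sum_{c\in\mathbb{F}_Q}\Big(\sum_{x\in\mathbb{F}_Q^*}\psi(cx^d)\Big)^k .$$
The term $c=0$ contributes $(Q-1)^k$, which matches the first summand of \eqref{eqn 7.1}. For $c\neq 0$ let $S(c)=\sum_{x\in\mathbb{F}_Q^*}\psi(cx^d)$. Since $x\mapsto x^d$ is $d$-to-$1$ onto the subgroup $D$ of $d$-th powers,
$$S(c)=\sum_{y\in\mathbb{F}_Q^*}\psi(cy)\sum_{j=0}^{d-1}\lambda^j(y)=-1+\sum_{j=1}^{d-1}\bar\lambda^{j}(c)\,G(\lambda^j),$$
where $\lambda$ is a multiplicative character of order $d$ and $G(\lambda^j)$ is the Gauss sum. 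So $S(c)$ depends only on the coset $cD$, and each of the $d$ cosets contains $(Q-1)/d$ elements.

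Next I would use the hypothesis that $d$ is $(p,r)$-admissible and $s$ is even. Here $d\mid p^r+1$ with $r$ minimal, and since $d\mid Q-1$ we have $2r\mid s$. In this setting the Stickelberger/semiprimitive theorem (Lidl--Niederreiter, Theorem 5.16, or Berndt--Evans--Williams) gives $G(\lambda^j)=(-1)^{s/(2r)-1}Q^{1/2}$ for $p=2$. For odd $p$ there is an extra sign factor, $(-1)^{s/(2r)-1+(p^r+1)j/d}$. I would first treat the case where this reduces to the uniform value $G(\lambda^j)=-(-1)^{s/2r}Q^{1/2}=:\epsilon$ for all $1\le j\le d-1$, then check how the extra sign interacts with the cosets. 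With the uniform value, character orthogonality gives the following.

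If $c\in D$, then $\sum_{j=1}^{d-1}\bar\lambda^j(c)=d-1$, so $S(c)=-1-(-1)^{s/2r}(d-1)Q^{1/2}=A(d)$. This happens for $(Q-1)/d$ values of $c$.

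If $c\notin D$, then $\sum_{j=1}^{d-1}\bar\lambda^j(c)=-1$, so $S(c)=-1+(-1)^{s/2r}Q^{1/2}=B(d)$. This happens for $(Q-1)(d-1)/d$ values of $c$.

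Summing $S(c)^k$ over $c\ne0$ then yields exactly $\frac{Q-1}{d}A(d)^k+\frac{(Q-1)(d-1)}{d}B(d)^k$, which is \eqref{eqn 7.1}.

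The main obstacle is the sign bookkeeping in the semiprimitive Gauss sum evaluation for odd $p$, specifically when $(p^r+1)/d$ is odd. In that case $G(\lambda^j)$ alternates in sign with $j$. Then the coset where $S(c)=A(d)$ is no longer $D$ but the coset $\gamma D$ with $\lambda(\gamma)=-1$. That shift permutes which coset carries which value without changing the multiplicities, so the formula still holds. I would verify this carefully, and if needed reduce to it by substituting $c\mapsto\gamma c$ in the sum over $c$.
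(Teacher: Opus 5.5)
The paper gives no proof of this lemma; it is quoted from \cite{JGC-2025}. Your proposal is therefore a self-contained route rather than a variant of anything in the text. It is the standard argument, and it works. The orthogonality formula, the reduction $S(c)=-1+\sum_{j=1}^{d-1}\bar\lambda^j(c)G(\lambda^j)$, and the coset count are all correct. They also show where $A(d)$ and $B(d)$ come from: they are the two values taken by $\sum_{x\neq 0}\psi(cx^d)$ on the cosets of the $d$-th powers, with multiplicities $(Q-1)/d$ and $(Q-1)(d-1)/d$. The paper's citation does not give that information.

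\textbf{The odd-$p$ sign.} One input needs correcting. For odd $p$ the semiprimitive evaluation is
\[
G(\lambda^j)=(-1)^{\gamma-1+\gamma j(p^r+1)/d}\,Q^{1/2},\qquad \gamma=\frac{s}{2r}.
\]
Your exponent $\gamma-1+j(p^r+1)/d$ is missing the factor $\gamma$ in the second term. For example, take $Q=81$, $d=4$, $r=1$, $\gamma=2$. Davenport--Hasse gives $G_{81}(\lambda)=-G_9(\chi)^2=-(-3)^2=-9$, whereas your formula gives $+9$. So the signs alternate in $j$ exactly when $\gamma(p^r+1)/d$ is odd, and they are uniformly $-Q^{1/2}$ when $\gamma$ is even.

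\textbf{Why the conclusion survives, and a cleaner route.} The final formula is unaffected, and you can skip the sign bookkeeping entirely.
\begin{enumerate}
\item Write $\lambda=\chi\circ N_{\mathbb{F}_Q/\mathbb{F}_{p^{2r}}}$. Since $d\mid p^r+1$, $\chi$ is trivial on $\mathbb{F}_{p^r}^*$.
\item Split the Gauss sum over cosets of $\mathbb{F}_{p^r}^*$. This gives $G_{p^{2r}}(\chi^j)=p^r\chi(y_0)^j$, where $y_0^{p^r-1}=-1$. Moreover $\chi(y_0)=\pm1$, because $y_0^2\in\mathbb{F}_{p^r}^*$.
\item Davenport--Hasse then yields $G(\lambda^j)=(-1)^{\gamma-1}Q^{1/2}\epsilon^j$ for every $1\le j\le d-1$, with $\epsilon=\chi(y_0)^{\gamma}\in\{\pm1\}$.
\end{enumerate}
This also covers the $j$ for which $\lambda^j$ has order smaller than $d$. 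Quoting Lidl--Niederreiter 5.16 verbatim there would force you to re-identify the minimal exponent for $\mathrm{ord}(\lambda^j)$.

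If $\epsilon=1$, your first computation applies unchanged. If $\epsilon=-1$, then $d$ is even and $\lambda(c_0)=-1$ for some $c_0$. The same computation then puts $A(d)$ on $c_0D$ and $B(d)$ on the remaining $d-1$ cosets, which is exactly your shift argument.
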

\begin{prop}\label{prop 7.19}
    $\sigma_{\ell}(\x_1,\, \x_2)$ is a PP over $\mathbb{F}_Q$ if and only if $\gcd(\ell,\,Q-1)=1.$
\end{prop}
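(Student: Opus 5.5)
If $\gcd(\ell,Q-1)=1$, then $\x\mapsto \x^{\ell}$ is a permutation of $\mathbb{F}_Q$. In that case $\sigma_\ell(\x_1,\x_2)=e_1(\x_1^\ell,\x_2^\ell)$, so for each $b$ the number of solutions of $\sigma_\ell=b$ equals the number of solutions of $u+v=b$, which is $Q$. Equivalently, one can apply part (2) of Lemma~\ref{lem 7.6} with $g=e_1$ and $h_1=h_2=\x^\ell$. Either way, $\sigma_\ell$ is a PP in two variables.

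For the converse we assume $d=\gcd(\ell,Q-1)>1$ and exhibit some $b$ for which $\sigma_\ell(\x_1,\x_2)=b$ does not have exactly $Q$ solutions. The natural choice is $b=0$. By Remark~\ref{rmk 7.14} it suffices to count solutions of $\x_1^d+\x_2^d=0$, and we do this directly, without appealing to the admissibility hypothesis of Lemma~\ref{lem 7.17}.

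\begin{itemize}
\item The solution $(0,0)$ is the only one with a zero coordinate, since $\x_1=0$ forces $\x_2^d=0$.
\item When $\x_1\neq 0$, the equation says $(\x_2/\x_1)^d=-1$.
\item The map $\x\mapsto \x^d$ on $\mathbb{F}_Q^*$ is $d$-to-$1$ onto the subgroup of $d$-th powers. So $t^d=-1$ has either $0$ or $d$ solutions $t\in\mathbb{F}_Q^*$, according to whether $-1$ is a $d$-th power.
\end{itemize}

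It follows that
\[
N=1+(Q-1)\,\varepsilon d,\qquad \varepsilon\in\{0,1\}.
\]
If $\varepsilon=0$ then $N=1\neq Q$. If $\varepsilon=1$ then $N=1+(Q-1)d\neq Q$ because $d>1$. This agrees with the formula $Q+(Q-1)(d-1)$ quoted in Lemma~\ref{lem 7.17} for $k=2$. In either case $\sigma_\ell$ is not a PP.

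The only delicate point is making sure no hidden hypothesis is needed. Characteristic $2$ is harmless because $-1=1$ there. The case $d=2$ is also covered by Proposition~\ref{prop 7.15}. I therefore expect no real obstacle: the elementary count above covers every $d>1$ uniformly.
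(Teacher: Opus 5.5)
Your proof is correct, but the converse takes a different and more self-contained route than the paper's.

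\textbf{The paper's route.} The paper gets the ``if'' direction from Theorem~\ref{T2.2}: $x_1^{\ell}$ is a PP, so the sum $x_1^\ell+x_2^\ell$ is one. For the converse it first disposes of $d=2$ by Proposition~\ref{prop 7.15}, a quadratic-character count for $b\neq 0$. For $d>2$ it then quotes the $k=2$ clause of Lemma~\ref{lem 7.17}, $N=Q+(Q-1)(d-1)$, for the number of zeros of $\sigma_d(x_1,x_2)$.

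\textbf{Your route.} You count the zeros of $x_1^d+x_2^d$ directly and get $N=1+\varepsilon d(Q-1)$, with $\varepsilon=1$ exactly when $-1$ is a $d$-th power in $\mathbb{F}_Q$.

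\textbf{What the comparison shows.} Since $Q+(Q-1)(d-1)=1+d(Q-1)$, the formula the paper quotes is only your case $\varepsilon=1$. It fails when $-1$ is not a $d$-th power. For example, with $Q=13$ and $d=4$, the fourth powers in $\mathbb{F}_{13}^*$ are $\{1,3,9\}$, so $(0,0)$ is the only zero and $N=1$, not $49$. The paper's proof never checks that $-1$ is a $d$-th power. Because $N\neq Q$ in both of your cases, your elementary argument:
\begin{itemize}
\item handles every $d>1$ uniformly, including $d=2$ and characteristic $2$;
\item needs neither Proposition~\ref{prop 7.15} nor any Jacobi-sum or admissibility machinery;
\item closes that gap.
\end{itemize}
The paper's version is shorter on the page, but it rests on a cited formula used outside the range where it holds.
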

\begin{proof}
    The necessity follows from Theorem~\ref{T2.2}.

    Assume to the contrary that $\gcd(\ell,\,Q-1)=d>1.$ In fact we must have $d>2$ because of Proposition~\ref{prop 7.15}. Lemma~\ref{lem 7.17} says that the number of solutions $N$ of $\sigma_d(x_1,\,x_2)=0$ in $\mathbb{F}_{Q}^2$ is $Q+(Q-1)(d-1)\neq Q.$ The desired result follows from Remark~\ref{rmk 7.14}.
\end{proof}
\begin{rmk}\label{rmk 7.20}
   The following formula for $M_k(d)$ appears in \cite[p. 293]{Lidl-Niederreiter-97}.
$$M_k(d)=(-1)^k+\sum_{r=1}^{k}(-1)^{k-r}\sum_{1\leq i_1<i_2<\cdots<i_r\leq k}^{}\frac{d^r}{\text{lcm}~(\underbrace{d,\,d,\,\dots,\,d}_{r})}.$$
We simplify the expression to obtain
\[
\begin{split}
    M_k(d)&=\begin{cases}
1+\frac{(d-1)^k-1}{d}&\text{if}~k~\text{is even},\cr
-1+\frac{(d-1)^k+1}{d}&\text{if}~k~\text{is odd},
\end{cases}
\end{split}
\]
Clearly, $M_k(d)=0$ if and only if $k$ is odd and $d=2.$ 
\end{rmk}

\begin{prop}\label{prop 7.21}
    Let $Q=p^s,$ where $s\equiv 0\pmod 2$ and $d=\gcd(\ell,\,Q-1)$ be $(p,\,r)$-admissible. Then $\sigma_\ell\in\mathbb{F}_Q[\x_1,\,\dots,\,\x_k],\,k>2$ is never a PP over $\mathbb{F}_Q.$
\end{prop}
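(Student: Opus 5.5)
By Remark~\ref{rmk 7.14}, it suffices to treat the exponent $d=\gcd(\ell,Q-1)$ in place of $\ell$. Since $\sigma_\ell$ is a PP exactly when $\sigma_\ell(\bar{\x})=b$ has $Q^{k-1}$ solutions for every $b\in\mathbb{F}_Q$, I will exhibit one value $b$ for which this count fails. I will take $b=0$ and show that $N$, the number of solutions of $\sigma_d(\x_1,\dots,\x_k)=0$ in $\mathbb{F}_Q^k$, differs from $Q^{k-1}$. Admissibility forces $d\mid p^r+1$ with $d\mid Q-1$. Since $d$ is assumed admissible and the ambient setting is $\gcd(\ell,Q-1)\neq 1$, I take $d\ge 2$.

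First I dispose of $d=2$. In this case Proposition~\ref{prop 7.15} already says that $\sigma_\ell$ is not a PP, because the count for $b\neq 0$ is off. So assume $d>2$ and $k>2$.

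If $(k,d)\neq(4,3)$, the hypotheses of Lemma~\ref{lem 7.17} hold: $s$ is even and $d$ is $(p,r)$-admissible, so condition (2) is satisfied. Lemma~\ref{lem 7.17} then gives
\[
|N-Q^{k-1}|=M_k(d)(Q-1)Q^{\frac{k-2}{2}}.
\]
By Remark~\ref{rmk 7.20}, $M_k(d)=0$ only when $k$ is odd and $d=2$, which is excluded. Hence $M_k(d)\ge 1$, the right-hand side is positive, and $N\neq Q^{k-1}$.

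The exceptional case $(k,d)=(4,3)$ is where I expect the real work, since it needs an explicit count. I will use Lemma~\ref{lem 7.18}. The solutions of $\sigma_3=0$ in $\mathbb{F}_Q^4$ split according to which coordinates are zero. Writing $N^*_j$ for the number of roots of $\sigma_3$ in $(\mathbb{F}_Q^*)^j$, we get
\[
N=\sum_{j=0}^{4}\binom{4}{j}N^*_j .
\]
Here $N^*_0=1$, $N^*_1=0$, and $N^*_2=(Q-1)\cdot\#\{t:t^3=-1\}=3(Q-1)$, because $3\mid Q-1$ and $-1$ is a cube. The terms $N^*_3$ and $N^*_4$ come from \eqref{eqn 7.1} with $d=3$, $A=-1-2\epsilon\sqrt Q$, $B=-1+\epsilon\sqrt Q$, where $\epsilon=\pm1$.

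Alternatively, and more cleanly, I will compute $N-Q^{k-1}$ directly from the Jacobi sum expansion, or expand \eqref{eqn 7.1}, and check that the result is a nonzero expression in $\sqrt Q$. I expect a multiple of $(Q-1)Q$, namely $\pm M_4(3)(Q-1)Q$ with $M_4(3)=1+(16-1)/3=6$. Either route should give $N-Q^{3}\neq 0$, since the leading $\sqrt Q$-powers cannot cancel because they have distinct degrees.

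In all cases $N\neq Q^{k-1}$ for $b=0$, so $\sigma_\ell$ is not a PP over $\mathbb{F}_Q$.
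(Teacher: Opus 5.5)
\paragraph{Where your proposal stands.} For $(k,d)\neq(4,3)$ your argument is the paper's: reduce to $d=\gcd(\ell,Q-1)$, then combine Lemma~\ref{lem 7.17} with Remark~\ref{rmk 7.20}. Sending $d=2$ to Proposition~\ref{prop 7.15} is a small improvement, since the paper's proof just says ``since $d>2$''. The gap is the case $(k,d)=(4,3)$, which you set up but never finish. Your justification, ``the leading $\sqrt Q$-powers cannot cancel because they have distinct degrees,'' does not give $N\neq Q^3$. At most it shows that $N-Q^3$ is a nonzero polynomial in $t=\sqrt Q$. Such a polynomial can still vanish at a particular prime power $t$, and ruling that out is exactly the work the paper does for its polynomials $g_1,g_2$. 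You need the actual value of $N$.

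\paragraph{Completing the $(4,3)$ case.} Your decomposition $N=\sum_j\binom4j N_j^*$ is the right one. It is in fact more careful than the paper's proof, which writes $N=N^*(\sigma_d)+1$ with $N^*$ taken from \eqref{eqn 7.1}. That count is over $(\mathbb{F}_Q^*)^4$, so the paper drops the $6N_2^*+4N_3^*$ solutions with exactly two or three nonzero coordinates. Put $\epsilon=(-1)^{s/(2r)}$ and $t=\sqrt Q$. Then \eqref{eqn 7.1} with $k=3$ and $k=4$ gives $N_3^*=(Q-1)(Q-8-2\epsilon t)$ and $N_4^*=(Q-1)(Q^2+3Q+15+8\epsilon t)$. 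Together with your $N_0^*=1$, $N_1^*=0$, $N_2^*=3(Q-1)$ this yields
\[
N=1+18(Q-1)+4(Q-1)(Q-8-2\epsilon t)+(Q-1)(Q^2+3Q+15+8\epsilon t)=1+(Q-1)(Q^2+7Q+1)=Q^3+6Q(Q-1).
\]
The $\epsilon t$ terms cancel, so $N-Q^3=6Q(Q-1)>0$.

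\paragraph{A faster route.} Your Gauss-sum alternative is quicker still. The tuples $(j_1,\dots,j_4)\in\{1,2\}^4$ with $\sum j_i\equiv0\pmod 3$ are the six arrangements of two $1$'s and two $2$'s. Each contributes $\frac{Q-1}{Q}\bigl(G(\chi)G(\bar\chi)\bigr)^2=\frac{Q-1}{Q}Q^2$, because $\chi(-1)=\chi((-1)^3)=1$ for a cubic character. Hence $N=Q^3+6Q(Q-1)$ without using admissibility at all. This confirms your guess $M_4(3)(Q-1)Q$ with the plus sign. With either computation written out, your proof is complete.
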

\begin{proof}
    Case I:  $(k,d)\neq(4,3).$ 
    
    By Lemma~\ref{lem 7.17}, we have $|N-Q^{k-1}|=M_k(d)(Q-1)Q^{\frac{k-2}{k}},$ where $N$ denotes the number of solutions of $\sigma_d(\x_1,,\,\dots,\,\x_k)=0$ in $\mathbb{F}_Q^k.$ Since $d>2,$  Remark~\ref{rmk 7.20} implies that $M_k(d)\neq 0.$ Consequently, $N\neq Q^{k-1},$ which implies that $\sigma_d$ is  not a PP over $\mathbb{F}_Q$, and therefore the desired result follows from Remark~\ref{rmk 7.14}. 

    Case II: $(k,d)=(4,3).$

    Substituting this value in Equation~\ref{eqn 7.1}, we obtain
    \[
    \begin{aligned}
        N^*(\sigma_d)&=\begin{cases}
            \frac{1}{Q}\left[(Q-1)^4+\frac{Q-1}{3}(1+2Q^{\frac{1}{2}})^4+\frac{2(Q-1)}{3}(Q^\frac{1}{2}-1)^4\right] & \text{if}~\frac{s}{2r}~\text{is even},\cr
            \frac{1}{Q}\left[(Q-1)^4+\frac{Q-1}{3}(-1+2Q^{\frac{1}{2}})^4+\frac{2(Q-1)}{3}(Q^\frac{1}{2}+1)^4\right] & \text{if}~\frac{s}{2r}~\text{is odd},
        \end{cases}
        \\[2 mm]
        &=\begin{cases}
            Q^3+C_1 & \text{if}~\frac{s}{2r}~\text{is even},\cr
              Q^3+C_2 & \text{if}~\frac{s}{2r}~\text{is odd},
        \end{cases}
        \qquad \text{where}
    \end{aligned}
    \]
    \[
    \begin{aligned}
        C_1&=\frac{-4Q^3+6Q^2-4Q+1}{Q}+\frac{Q-1}{3Q}(1+2Q^{\frac{1}{2}})^4+\frac{2(Q-1)}{3Q}(Q^{\frac{1}{2}}-1)^4,\\
    C_2&=\frac{-4Q^3+6Q^2-4Q+1}{Q}+\frac{Q-1}{3Q}(-1+2Q^{\frac{1}{2}})^4+\frac{2(Q-1)}{3Q}(Q^{\frac{1}{2}}+1)^4,
    \end{aligned}
    \]
     and $N^*(\sigma_d)$ denotes the number of solutions of $\sigma_d(x_1,\,\dots,\,x_k)=0$ in $(\mathbb{F}_Q^k)^*.$
    We claim that $C_1+1\neq 0$ and $C_2+1\neq 0.$ Let $Q^{1/2}=t,$ where $t$ is a positive integer. Then after simplification, we have 
    \[
    \begin{aligned}
        C_1&=2t^4+8t^3+12t^2-8t-15,\,\text{and}\\
    C_2&=2t^4-8t^3+12t^2+8t-15.
    \end{aligned}
    \]
     Now $C_1+1=0$ if and only if $g_1(t)=2t^4+8t^3+12t^2-8t-14=0$ and $C_2+1=0$ if and only if $g_2(t)=2t^4-8t^3+12t^2+8t-14=0.$ Since $t\in\mathbb{Z}^+$, the Rational Root Theorem implies that, any integer root of $g_1(t)$ or $g_2(t)$ must divide $14.$ Hence, it suffices to check for only $t=2$ and $t=7$ since $t=Q^{1/2}$ and $Q$ is a square of a prime. A direct computation shows that $g_i(2)\neq 0$ and $g_i(7)\neq 0$ for any $i\in\{1,\,2\}.$

    Thus, $N=N^*(\sigma_d)+1\neq Q^3$ and it follows that $\sigma_d$ is not a PP over $\mathbb{F}_Q,$ and therefore $\sigma_\ell$ is not a PP over $\mathbb{F}_Q$ by Remark~\ref{rmk 7.14}.
\end{proof}

Combining Proposition~\ref{prop 7.15}, Proposition~\ref{prop 7.19} and Proposition~\ref{prop 7.21}, we obtain the following theorem.
\begin{thm}
     Let $q=p^m,\,Q=p^e,\,g_{n,q}$ be a PP over $\mathbb{F}_Q$ and assume that $\gcd(\ell,\,Q-1)=d>1.$ Then the following statements hold.
    \begin{enumerate}
        \item 
        $g_{n,q}^{(\ell)}(\x_1,\,\x_2)\in\mathbb{F}_Q[\x_1,\,\x_2]$ is not a PP over $\mathbb{F}_Q.$
        \item 
        If $d=2,$ then $g_{n,q}^{(\ell)}$ is not a PP over $\mathbb{F}_Q.$ 
        \item 
        If $d>2$ is $(p,\,r)$-admissible and $me$ is even then $g_{n,q}^{(\ell)}$ is not a PP over $\mathbb{F}_Q.$ 
        
    \end{enumerate}
\end{thm}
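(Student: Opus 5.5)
The theorem is proved by reducing each statement to the corresponding statement about $\sigma_\ell$, using the counting argument given just before Lemma on diagonal equations. Since $g_{n,q}$ is a PP over $\mathbb{F}_Q$, for every $a\in\mathbb{F}_Q$ there is a unique $b\in\mathbb{F}_Q$ with $g_{n,q}(b)=a$. The equation $g_{n,q}^{(\ell)}(\bar{\x})=g_{n,q}(\sigma_\ell(\bar{\x}))=a$ is therefore equivalent to $\sigma_\ell(\bar{\x})=b$. So the number of solutions of $g_{n,q}^{(\ell)}=a$ equals the number of solutions of $\sigma_\ell=b$. As $a$ runs over $\mathbb{F}_Q$, the element $b$ runs over all of $\mathbb{F}_Q$. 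Hence $g_{n,q}^{(\ell)}$ is a PP in $k$ variables exactly when $\sigma_\ell$ is; this is the $(3)\Leftrightarrow$-type equivalence of Theorem~\ref{thm 7.10}, with $f=g_{n,q}$. It therefore suffices to show that $\sigma_\ell$ fails to be a PP in each of the three situations.

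For (1), with $k=2$ and $d=\gcd(\ell,Q-1)>1$, Proposition~\ref{prop 7.19} says directly that $\sigma_\ell(\x_1,\x_2)$ is not a PP over $\mathbb{F}_Q$, so $g_{n,q}^{(\ell)}(\x_1,\x_2)$ is not a PP. For (2), $d=2$, and Proposition~\ref{prop 7.15} gives some $b\in\mathbb{F}_Q^*$ (in fact every such $b$) for which $\sigma_\ell=b$ does not have $Q^{k-1}$ solutions. Taking $a=g_{n,q}(b)$ produces a value whose fibre under $g_{n,q}^{(\ell)}$ has the wrong size.

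For (3), the hypothesis that $me$ is even must be translated into the hypothesis of Proposition~\ref{prop 7.21}. We read $Q=q^e=p^{me}$, so $s=me$; the statement's ``$Q=p^e$'' appears to be a typo for $Q=q^e$. Then $s\equiv 0 \pmod 2$, and $d$ is $(p,r)$-admissible by assumption. For $k>2$, Proposition~\ref{prop 7.21} shows that $\sigma_\ell$ is not a PP, via the failure of the fibre over $b=0$, whether through Lemma~\ref{lem 7.17} or through the $(k,d)=(4,3)$ computation. For $k=2$, part (1) already covers the case. In both cases we pick $a=g_{n,q}(0)$ and conclude.

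I do not expect a real obstacle. The one step that needs care is the bookkeeping in (3): matching $s$ with $me$, and noting that Proposition~\ref{prop 7.21} requires $k>2$, so that $k=2$ must be deferred to (1). The only other thing to check is the elementary observation that, because $g_{n,q}$ is bijective, fibres of $g_{n,q}\circ\sigma_\ell$ correspond exactly to fibres of $\sigma_\ell$.
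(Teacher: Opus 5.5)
Your proposal is correct and is exactly the paper's argument: bijectivity of $g_{n,q}$ makes the fibres of $g_{n,q}^{(\ell)}$ match those of $\sigma_\ell$ (the paragraph preceding the diagonal-equation lemma), and parts (1)--(3) then follow from Propositions~\ref{prop 7.19}, \ref{prop 7.15} and \ref{prop 7.21}. Your deferral of the case $k=2$ in part (3) to part (1) makes explicit what the paper leaves implicit, and your reading $s=me$ is the intended one (the parity hypothesis is in fact automatic, since a $(p,r)$-admissible $d>2$ dividing $p^s-1$ forces $2r\mid s$); the only slip is crediting the fibre correspondence to Theorem~\ref{thm 7.10}, whose hypothesis that $h$ be an LPP fails for $\sigma_\ell$ when $d>1$, when it is really Lemma~\ref{lem 7.6}(1) or simply your own direct argument.
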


\begin{rmk}
    If $g_{n,q}$ is not a PP over $\mathbb{F}_Q,$ then there exists $a\in\mathbb{F}_Q$ such that $a\neq g_{n,q}(b)$ for any $b\in\mathbb{F}_Q.$ If $g_{n,q}^{(\ell)}$ is a PP over $\mathbb{F}_Q,$ then $g_{n,q}^{(\ell)}=a$ has exactly $Q^{k-1}$ solutions in $\mathbb{F}_Q^k.$ It implies that $g_{n,q}(\sigma_\ell(\x_1,\,\dots,\,\x_k))=a$ has exactly $Q^{k-1}$ solutions in $\mathbb{F}_Q^k.$ Consequently, we can write $a=g_{n,q}(b)$ for some $b\in\mathbb{F}_Q,$ which is a contradiction.

Therefore, whenever univariate $g$-polynomial is not a PP, the multivariate $g$-polynomial $g^{(\ell)},\,\ell\geq 1$ can never be a PP irrespective of the gcd condition.
\end{rmk}
   
\begin{conj}
 Let $q=p^m$  and $Q=p^e.$ The polynomial $g_{n,q}^{(\ell)}\in\mathbb{F}_Q[\x_1,\,\dots,\,\x_k]$ is a PP over $\mathbb{F}_Q$ if and only if $\gcd(\ell,\,Q-1)=1$ and $g_{n,q}(\x)$ is a PP over $\mathbb{F}_Q.$
\end{conj}

\begin{rmk}To prove the conjecture one only needs to show that if $\sigma_\ell\in\mathbb{F}_Q[\x_1,\,\dots,\,\x_k]$ is a PP over $\mathbb{F}_Q,$ then $\gcd(\ell,\,Q-1)=1$.
\end{rmk} 

\section{Some open questions on PPs from polynomial $g_{n,q}(\x)$}

In this section, we present some open questions on the permutation behaviour of the polynomial $g_{n,q}$ over finite fields of even characteristic. In Table 1, we have pointed out the unexplained desirable triples that each of the following open questions, once answered, would explain. 

\begin{ques}\label{Q1}
Let $q>2$ be even, $e>0$ be even, and $n=(q-1)\cdot q^0+(q-1)\cdot q^{e/2}+1\cdot q^e$. Then $g_{n,q}=S_1+S_{e/2}^{q^{e/2}}+S_e\cdot S_{e/2}^{q-1}$. Find conditions on $q$ and $e$ for which  $g_{n,q}$ is a PP of $\mathbb{F}_{q^e}.$  
\end{ques}


\begin{ques}\label{Q2}
Let $q=4$, $n=1\cdot q^0+1\cdot q^{e-2}+1\cdot q^{e-1}+2\cdot q^e$, and $e>2$. Then $$g_{1\cdot q^0+1\cdot q^{e-2}+1\cdot q^{e-1}+2\cdot q^e}=S_{e-1}\cdot S_e+S_2^{q^{e-2}}\cdot S_{e-1}+S_e^2.$$ Find conditions on $e$ for which $g_{n,q}$ is a PP of $\mathbb{F}_{q^e}.$  
\end{ques}


\begin{ques}\label{Q3}
Let $q>2$ be even, $n=1\cdot q^0+(q-1)\cdot q^{e/2}+(q-1)\cdot q^e$, and $e>0$ be even. Then 
$$g_{1\cdot q^0+(q-1)\cdot q^{e/2}+(q-1)\cdot q^e}=S_{e/2+1}^{q^{e/2}}+S_e+S_e\,(S_{e/2}^{q^{e/2}})^{q-1}.$$
Find conditions on $q$ and $e$ for which $g_{n,q}$ is a PP of $\mathbb{F}_{q^e}.$  
\end{ques}


\begin{ques}\label{Q4}
Let $q>2$ be even and $e>2$ a positive integer. Let $n=(q-1)\cdot q^0+(q-2)\cdot q^2+1\cdot q^e+1\cdot q^{e+2}$. Find conditions on $q$ and $e$ for which $$g_{n,q}=\x^q+S_e^2\,S_2^{q-2}+S_eS_2^{q-1}$$ is a PP of $\mathbb{F}_{q^e}.$  
\end{ques}


\begin{ques}\label{Q5}
Let $q>2$ be even and $e>3$ a positive integer. Let $n=(q-1)\cdot q^0+(q-2)\cdot q^2+1\cdot q^{e-1}+1\cdot q^{e+1}$. Find conditions on $q$ and $e$ for which $$g_{n,q}=\x^{q^{e-1}}+\x\,S_2^{q-2}S_{e-1}+S_2^{q-2}\,S_{e-1}\,S_e$$ is a PP of $\mathbb{F}_{q^e}.$  
\end{ques} 


\begin{ques}\label{Q6}
Let $q=4$ and $e>2$ be even. Let $$n=1\cdot q^0+2\cdot q^{\frac{e}{2}-1}+2\cdot q^{e-1}+1\cdot q^{e}+2\cdot q^{\frac{3e-2}{2}}.$$ Find conditions on $e$ for which \[
\begin{split}
g_{1\cdot q^0+2\cdot q^{\frac{e}{2}-1}+2\cdot q^{e-1}+1\cdot q^{e}+2\cdot q^{\frac{3e-2}{2}}}&=S_{\frac{e}{2}-1}^2+S_{\frac{e}{2}}^{2\cdot q^{e-1}}+S_e\,S_{\frac{3e}{2}-1}\,+\,\x\,S_e+S_e\,S_{e-1}^2\,S_{\frac{e}{2}-1}^2\cr
&+S_e\,S_{\frac{3e}{2}-1}+S_e\,S_{\frac{3e}{2}-1}^2\,\Big(S_{\frac{e}{2}-1}^2+S_{e-1}^2\Big)
\end{split}
\] is a PP of $\mathbb{F}_{q^e}.$  
\end{ques} 


\begin{ques}\label{Q7}
Let $q=4$, $n=1\cdot q^0+3\cdot q^1+3\cdot q^2+2\cdot q^3+1\cdot q^e+1\cdot q^{e+1}$, and $e\geq 4$. Then 
\[
\begin{split}
g_{n,q}&=(S_2^q\,S_3+\x\,S_3+S_e\,S_{e+1})\,\Big(1+(1+\x^{q-1})\,S_2^{q-1}\Big)+\x\,S_3^2\,\Big((1+\x^{q-1})\,S_2^2+\x^2\,S_2^{q-1}\Big)\cr
&+S_3^2\,S_e\,S_{e+1}\,\Big((1+\x^{q-1})\,S_2+\x^{q+1}\,S_2^2\Big)\cr
\end{split}
\]
Find conditions on $e$ for which $g_{n,q}$ is a PP of $\mathbb{F}_{q^e}.$  
\end{ques} 


\begin{ques}\label{Q8}
Let $q=4$, $n=1\cdot q^0+2\cdot q^1+1\cdot q^2+2\cdot q^3+2\cdot q^{e+1}$, and $e\geq 3$. Then 
$$g_{n,q}=\x S_3+S_2^q\cdot S_3+S_2^{q+1}+S_2\,S_3+S_e^2+\x^qS_2+\x^2S_2S_e^2+S_2S_3^2S_e^2.$$
Find conditions on $e$ for which $g_{n,q}$ is a PP of $\mathbb{F}_{q^e}.$  
\end{ques}  


\section*{Acknowledgments}
The authors are grateful to Allen Broughton, Daniel Katz, Xiang-dong Hou, and Dhiren Kumar Basnet for the valuable discussions and suggestions on a preliminary draft of the paper. Bhitali Kousik was supported by DST-INSPIRE Fellowship, Government of India (INSPIRE Reg. No. IF230368).

%
%

\section*{Appendix}

The following is an update of \cite[Table~1]{Fernando-Hou-FFA-2015} of all desirable triples $(n,e;4)$ with $e\le 6$ and $w_4(n)>4$, where $w_4(n)$ is the base $4$ weight of $n$. Only a few entries remain to be explained.

\begin{table}[!h]
\caption{Desirable triples $(n,e;4)$, $e\le 6$, $w_4(n)>4$}\label{Tb3}
\vspace{-5mm}
\[
\begin{tabular}{|c|r|l|c|}
\hline
$e$ & $n \hfil$ & base $4$ digits of $n$ & reference \\ \hline
\hline

2&59&{3,2,3}&  \cite[Theorem 5.9 (ii)]{Fernando-Hou-Lappano-FFA-2013}    \\  \hline
2&127&{3,3,3,1}&  \cite[Proposition 3.1]{Hou-FFA-2012}  \\  \hline
3&29&{1,3,1}&  \cite[Example 6.3]{Fernando-Hou-Lappano-FFA-2013}    \\  \hline
3&101&{1,1,2,1}&   \cite[Theorem 6.12 ]{Fernando-Hou-Lappano-FFA-2013}   \\  \hline
3&149&{1,1,1,2}&  \textcolor{blue}{Question~\ref{Q2}}    \\  \hline
3&163&{3,0,2,2}&   \cite[Theorem 6.10]{Fernando-Hou-Lappano-FFA-2013}    \\  \hline
3&281&{1,2,1,0,1}&  \cite[Corollary 6.16]{Fernando-Hou-Lappano-FFA-2013}  \\  \hline
3&307&{3,0,3,0,1}&   \cite[Theorem 1.1]{Hou13}  \\  \hline
3&329&{1,2,0,1,1}&  \cite[Example 6.4]{Fernando-Hou-Lappano-FFA-2013}    \\  \hline
3&341&{1,1,1,1,1}&  \cite[Example 6.4 ]{Fernando-Hou-Lappano-FFA-2013}    \\  \hline
3&2047&{3,3,3,3,3,1}&  \cite[Proposition 3.1]{Hou-FFA-2012}    \\  \hline
4&281&{1,2,1,0,1}&     \cite[Theorem 6.12]{Fernando-Hou-Lappano-FFA-2013}   \\  \hline
4&307&{3,0,3,0,1}&   \textcolor{blue}{Question~\ref{Q1}}  \\  \hline
4&401&{1,0,1,2,1}&   \cite[Theorem 6.12]{Fernando-Hou-Lappano-FFA-2013}   \\  \hline
4&547&{3,0,2,0,2}&   \cite[Theorem 6.10]{Fernando-Hou-Lappano-FFA-2013}  \\  \hline
4&779&{3,2,0,0,3}&   \cite[Theorem 6.6]{Fernando-Hou-Lappano-FFA-2013}    \\  \hline
4&787&{3,0,1,0,3}&   \cite[Theorem 6.8]{Fernando-Hou-Lappano-FFA-2013}    \\  \hline
4&817&{1,0,3,0,3}&  \textcolor{blue}{Question~\ref{Q3}}    \\  \hline
4&899&{3,0,0,2,3}&   \cite[Theorem 6.6]{Fernando-Hou-Lappano-FFA-2013}    \\  \hline
4&1469&{1,3,3,2,1,1}& \textcolor{blue}{Question~\ref{Q7}}     \\  \hline
4&2201&{1,2,1,2,0,2}&  \textcolor{blue}{Question~\ref{Q8}}    \\  \hline
4&2317&{1,3,0,0,1,2}&   \cite[Proposition 3.4]{Fernando-Hou-FFA-2015}  \\  \hline
4&2321&{1,0,1,0,1,2}&   \cite[Theorem 6.12]{Fernando-Hou-Lappano-FFA-2013}   \\  \hline
4&2377&{1,2,0,1,1,2}&   \cite[Proposition 3.6]{Fernando-Hou-FFA-2015} \\  \hline
4&2441&{1,2,0,2,1,2}&  \textcolor{blue}{Question~\ref{Q6}}    \\  \hline
4&4387&{3,0,2,0,1,0,1}&  \textcolor{blue}{Question~\ref{Q4}}    \\  \hline
4&32767&{3,3,3,3,3,3,3,1}&  \cite[Proposition 3.1]{Hou-FFA-2012}  \\  \hline
5&29&{1,3,1}&   \cite[ Example 6.3]{Fernando-Hou-Lappano-FFA-2013}  \\  \hline
5&1049&{1,2,1,0,0,1}&   \cite[Theorem 6.12]{Fernando-Hou-Lappano-FFA-2013}    \\  \hline
5&1061&{1,1,2,0,0,1}&  \cite[Theorem 6.12]{Fernando-Hou-Lappano-FFA-2013}    \\  \hline
5&1169&{1,0,1,2,0,1}&  \cite[Theorem 6.12]{Fernando-Hou-Lappano-FFA-2013}   \\  \hline
5&1289&{1,2,0,0,1,1}&  \cite[Theorem 6.12]{Fernando-Hou-Lappano-FFA-2013}    \\  \hline
5&1409&{1,0,0,2,1,1}&  \cite[Theorem 6.12]{Fernando-Hou-Lappano-FFA-2013}    \\  \hline
5&1541&{1,1,0,0,2,1}&   \cite[Theorem 6.12]{Fernando-Hou-Lappano-FFA-2013}  \\  \hline
5&1601&{1,0,0,1,2,1}&  \cite[Theorem 6.12]{Fernando-Hou-Lappano-FFA-2013}   \\  \hline
5&2083&{3,0,2,0,0,2}&  \cite[Theorem 6.10]{Fernando-Hou-Lappano-FFA-2013}     \\  \hline
5&2563&{3,0,0,0,2,2}&   \cite[Theorem 6.9]{Fernando-Hou-Lappano-FFA-2013}    \\  \hline

\end{tabular}
\]
\end{table}

\clearpage

\addtocounter{table}{-1}
\begin{table}[!h]
\caption{continued}
\vspace{-5mm}
\[
\begin{tabular}{|c|r|l|c|}
\hline
$e$ & $n \hfil$ & base $4$ digits of $n$ & reference \\ \hline
\hline

5&4229&{1,1,0,2,0,0,1}&   \cite[Theorem 6.12]{Fernando-Hou-Lappano-FFA-2013}    \\  \hline
5&4289&{1,0,0,3,0,0,1}&  \textcolor{blue}{Theorem~\ref{T4.5}}   \\  \hline
5&4387&{3,0,2,0,1,0,1}&   \textcolor{blue}{Question~\ref{Q5}}   \\  \hline
5&5129&{1,2,0,0,0,1,1}&   \cite[ Example 6.4]{Fernando-Hou-Lappano-FFA-2013}   \\  \hline
5&5141&{1,1,1,0,0,1,1}&   \cite[Example 6.4 ]{Fernando-Hou-Lappano-FFA-2013}   \\  \hline
5&5189&{1,1,0,1,0,1,1}&   \cite[Example 6.4]{Fernando-Hou-Lappano-FFA-2013}    \\  \hline
5&5249&{1,0,0,2,0,1,1}&   \cite[Theorem 6.12]{Fernando-Hou-Lappano-FFA-2013}   \\  \hline
5&5381&{1,1,0,0,1,1,1}&   \cite[Example 6.4]{Fernando-Hou-Lappano-FFA-2013}    \\  \hline
5&8713&{1,2,0,0,2,0,2}&   \cite[Proposition 3.5]{Fernando-Hou-FFA-2015} \\  \hline
5&9281&{1,0,0,1,0,1,2}&   \cite[ Theorem 6.12 ]{Fernando-Hou-Lappano-FFA-2013}  \\  \hline
5&17429&{1,1,1,0,0,1,0,1}&   \cite[Proposition 3.3]{Fernando-Hou-FFA-2015} \\  \hline
5&17441&{1,0,2,0,0,1,0,1}&   \cite[Theorem 6.12]{Fernando-Hou-Lappano-FFA-2013}    \\  \hline
5&17489&{1,0,1,1,0,1,0,1}&   \cite[Proposition 3.3]{Fernando-Hou-FFA-2015} \\  \hline
5&17681&{1,0,1,0,1,1,0,1}&   \cite[Proposition 3.3]{Fernando-Hou-FFA-2015} \\  \hline
5&524287&{3,3,3,3,3,3,3,3,3,1}&  \cite[Proposition 3.1]{Hou-FFA-2012}    \\  \hline

6&4361&{1,2,0,0,1,0,1}&   \cite[Theorem 6.12]{Fernando-Hou-Lappano-FFA-2013}   \\  \hline
6&6161&{1,0,1,0,0,2,1}&    \cite[Theorem 6.12]{Fernando-Hou-Lappano-FFA-2013}    \\  \hline
6&6401&{1,0,0,0,1,2,1}&    \cite[Theorem 6.12]{Fernando-Hou-Lappano-FFA-2013}   \\  \hline
6&8227&{3,0,2,0,0,0,2}&   \cite[Theorem 6.10]{Fernando-Hou-Lappano-FFA-2013}   \\  \hline
6&8707&{3,0,0,0,2,0,2}&   \cite[Theorem 6.11]{Fernando-Hou-Lappano-FFA-2013}   \\  \hline
6&12299&{3,2,0,0,0,0,3}&   \cite[Theorem 6.6]{Fernando-Hou-Lappano-FFA-2013}   \\  \hline
6&12307&{3,0,1,0,0,0,3}&   \cite[Theorem 6.8 ]{Fernando-Hou-Lappano-FFA-2013}   \\  \hline
6&14339&{3,0,0,0,0,2,3}&   \cite[Theorem 6.6]{Fernando-Hou-Lappano-FFA-2013}    \\  \hline
6&37121&{1,0,0,0,1,0,1,2}&   \cite[Theorem 6.12]{Fernando-Hou-Lappano-FFA-2013}    \\  \hline
6&65801&{1,2,0,0,1,0,0,0,1}&   \cite[Corollary 6.16]{Fernando-Hou-Lappano-FFA-2013}    \\  \hline
6&65921&{1,0,0,2,1,0,0,0,1}&  \cite[Theorem 1]{Fernando-2019}   \\  \hline
6&66307&{3,0,0,0,3,0,0,0,1}&    \cite[Theorem 1.1]{Hou13}  \\  \hline
6&135209&{1,2,2,0,0,0,1,0,2}&  \textcolor{blue}{Theorem~\ref{T3.25}}    \\  \hline
6&135217&{1,0,3,0,0,0,1,0,2}&  \cite[Proposition 3.4]{Fernando-Hou-FFA-2015} \\  \hline
6&135457&{1,0,2,0,1,0,1,0,2}&  \cite[Proposition 3.7]{Fernando-Hou-FFA-2015} \\  \hline
6&137249&{1,0,2,0,0,2,1,0,2}&  \textcolor{blue}{Theorem~\ref{T3.27}}   \\  \hline
6&8388607&{3,3,3,3,3,3,3,3,3,3,3,1}&  \cite[Proposition 3.1]{Hou-FFA-2012}    \\  \hline

\end{tabular}
\]
\end{table}

\end{document}